\numberwithin{equation}{section}
\theoremstyle{plain}
\newtheorem{main}{Theorem}
\newtheorem{mcor}[main]{Corollary}
\newtheorem{theorem}{Theorem}[section]
\newtheorem{claim}[theorem]{Claim}
\newtheorem{lemma}[theorem]{Lemma}
\newtheorem{proposition}[theorem]{Proposition}
\newtheorem{corollary}[theorem]{Corollary}
\theoremstyle{definition}
\newtheorem{definition}[theorem]{Definition}
\newtheorem*{definition*}{Definition}
\newtheorem{remark}[theorem]{Remark}
\newtheorem{fact}[theorem]{Fact}
\begin{document}

\title[An exotic II$_1$ factor without property Gamma]
{An exotic II$_1$ factor without property Gamma}

\author[Ionu\c t Chifan]{Ionu\c t Chifan}
\address{Department of Mathematics, The University of Iowa, 14 MacLean Hall,  IA, 52242, U.S.A.}\email{ionut-chifan@uiowa.edu}
\urladdr{https://sites.google.com/view/i-chifan-website/}

\author[Adrian Ioana]{Adrian Ioana}
\address{Department of Mathematics, University of California San Diego, 9500 Gilman Drive, La Jolla, CA 92093, USA}\email{aioana@ucsd.edu}
\urladdr{https://mathweb.ucsd.edu/~aioana/}

\author[Srivatsav Kunnawalkam Elayavalli]{Srivatsav Kunnawalkam Elayavalli}
\address{Institute of Pure and Applied Mathematics, UCLA, 460 Portola Plaza, Los Angeles, CA 90095, USA}\email{srivatsav.kunnawalkam.elayavalli@vanderbilt.edu}
\urladdr{https://sites.google.com/view/srivatsavke/home}

{\thanks{I.C. was partially supported by NSF FRG Grant DMS-1854194 and NSF Grant DMS-2154637; A.I. was partially supported by NSF FRG Grant \#1854074, NSF Grant \#DMS-2153805 and a Simons Fellowship; S.K.E was supported by a Simons Postdoctoral Fellowship. }}

\begin{abstract} 
We introduce a new iterative amalgamated free product construction of II$_1$ factors, and use it to construct a separable II$_1$ factor which does not have property Gamma and is not elementarily equivalent to the free group factor $\text{L}(\mathbb F_n)$, for any $2\leq n\leq \infty$. This provides the first explicit example of two non-elementarily equivalent II$_1$ factors without property Gamma. Moreover, our construction also provides the first explicit example of a  II$_1$ factor without property Gamma that is also not elementarily equivalent to any ultraproduct of matrix algebras. Our proofs use a blend of techniques from Voiculescu's free entropy theory and Popa's deformation/rigidity theory. 

%We construct the first  example of a non Gamma separable II$_1$ factor that is not elementarily equivalent to $L(\mathbb{F}_n)$ for any $2\leq n\leq +\infty$. 
 \end{abstract}

\maketitle

\section{Introduction}

The   study of  the continuous model theory of II$_1$ factors  was initiated by Farah, Hart and Sherman in \cite{FHS}, who adapted the notion of elementary equivalence (requiring that the objects considered satisfy the same first-order sentences) to  the context of II$_1$ factors.
By the continuous version of the  Keisler-Shelah theorem, two II$_1$ factors $M,N$ are {\it elementarily equivalent} if and only if they admit isomorphic ultrapowers, $M^\mathcal U\cong N^\mathcal V$, for some ultrafilters $\mathcal U,\mathcal V$ on arbitrary sets \cite{FHS,hensoniovino}. Ultrapowers of II$_1$ factors have been a major tool in operator algebras since the works of McDuff \cite{McD2} and Connes \cite{Connes} in the 1970s. 
In spite of this, proving that two given II$_1$ factors have no isomorphic ultrapowers, and so are not elementarily equivalent, remains a challenging task. 

As shown in \cite{FHS} (see also \cite{FGL}), for separable II$_1$ factors,  Murray and von Neumann's property Gamma \cite{MvN43} and McDuff's property \cite{McD2}  are axiomatizable and thus are remembered by ultrapowers.  This implies that the hyperfinite II$_1$ factor $R$, the free group factor $\text{L}(\mathbb F_2)$ and any separable non-McDuff II$_1$ factor with property Gamma (see \cite{DiLa}) are not elementarily equivalent. It was then realized by Goldbring and Hart that a II$_1$ factor introduced in \cite{ZM} provides a fourth elementary equivalence class (see \cite{GH}).
However, besides these examples, it was unclear how to find any additional elementary equivalence classes of II$_1$ factors. 
This problem was solved by Boutonnet and two of the authors in \cite{BCI15} who proved that the continuum of non-isomorphic separable II$_1$ factors $(M_\alpha)_{\alpha\in\{0,1\}^\mathbb N}$ constructed by McDuff in \cite{McD1} are pairwise non elementarily equivalent. More precisely, the main result of \cite{BCI15} shows that $M_\alpha$ and $M_\beta$ do not admit isomorphic ultrapowers, whenever $\alpha\not=\beta$. Subsequently, explicit  sentences witnessing that $M_\alpha$ and $M_\beta$ are not elementarily equivalent were given in \cite{GH,GHT}. 

The proofs of the main result of \cite{BCI15} and in fact of all of the existing results providing non-elementarily equivalent II$_1$ factors are based on analyzing central sequences. %Indeed, the analysis of central sequences is currently the only known method to distiguish II$_1$ factors up to elementary equivalence.
As a result, it remained a fundamental open question to find any non-elementarily equivalent II$_1$ factors that do not have property Gamma and thus admit no non-trivial central sequences.

We settle this question in the present work. A main novelty of our approach, that allows us to bypass the above difficulty, is  the use of $1$-bounded entropy from Voiculescu's free probability theory. For a finite tuple $X$ of self-adjoint operators in a tracial von Neumann algebra $(N,\tau)$, one has the {\it $1$-bounded entropy} $h(X)$, implicit in Jung's work \cite{Jung2007} and defined explicitly by Hayes \cite{Hayes2018}, see Subsection \ref{1bdd}. This quantity, unlike  Voiculescu' free entropy dimension $\delta_0(X)$ \cite{VoiculescuFreeEntropy2},  is known to be an invariant of the von Neumann algebra generated by $X$ as shown in \cite[Theorem A.9]{Hayes2018}. Hence, we have a well-defined notion of $1$-bounded entropy $h(N)$ for a finitely generated tracial von Neumann algebra $(N,\tau)$. Moreover, $h(N)$ extends to arbitrary, possibly non-separable, tracial von Neumann algebras $(N,\tau)$ by \cite[Definition A.2]{Hayes2018}. 

The  main result of this paper  is the following:

\begin{main} \label{corollary about non ee of free group factors with our thing}
 There exists a separable II$_1$ factor $M$ which does not have property Gamma and is not elementarily equivalent to 
  any  tracial von Neumann algebra $(N,\tau)$ satisfying $h(N)>0$.    For instance, $M$ is not elementarily equivalent to $\emph{L}(\mathbb{F}_2)$.
  
  Moreover,  for any ultrafilters $\mathcal U,\mathcal V$ on sets $I,J$, respectively, there does not exist an embedding of $M^{\mathcal U}$ into $N^{\mathcal V}$ that contains the diagonal inclusion of $N$.

\end{main}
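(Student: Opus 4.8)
The plan is to derive Theorem \ref{corollary about non ee of free group factors with our thing} from two ingredients that, according to the introduction, are established earlier in the paper: first, a construction (via the iterated amalgamated free product of Section \ref{construction}) producing a separable II$_1$ factor $M$ that is \emph{not} of property Gamma but nevertheless has the ``approximate commutation'' feature witnessed in $M^{\mathcal U}$ for countably cofinal $\mathcal U$ --- namely that every pair $u_1,u_2\in\mathcal U(M^{\mathcal U})$ with $u_1^2=u_2^3=1$ and $\{u_1\}''\perp\{u_2\}''$ admits pairwise-commuting Haar unitaries $v_1,v_2\in M^{\mathcal U}$ with $[u_i,v_i]=0$; and second, the $1$-bounded entropy estimate $h(M^{\mathcal U})\le 0$ for \emph{every} ultrafilter $\mathcal U$. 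I would take the latter as the technical heart, proven earlier, and the proof of the present theorem as the model-theoretic packaging of it.

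The first step is to record the non-Gamma assertion, which is part of the construction of $M$ and needs no further argument here. For the failure of elementary equivalence, suppose $(N,\tau)$ is a tracial von Neumann algebra elementarily equivalent to $M$. By the continuous Keisler--Shelah theorem quoted in the introduction \cite{FHS,hensoniovino}, there are ultrafilters $\mathcal U,\mathcal V$ on sets $I,J$ with $M^{\mathcal U}\cong N^{\mathcal V}$. Now I invoke the monotonicity and ultrapower properties of $1$-bounded entropy from \cite{Hayes2018}: since $N$ embeds diagonally into $N^{\mathcal V}$ with expectation, one has $h(N)\le h(N:N^{\mathcal V})\le h(N^{\mathcal V})$; combining with the isomorphism and the estimate $h(M^{\mathcal U})\le 0$ gives $h(N)\le h(N^{\mathcal V})=h(M^{\mathcal U})\le 0$. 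Hence $M$ cannot be elementarily equivalent to any $N$ with $h(N)>0$. Applying this with $N=\mathrm{L}(\mathbb F_2)$, for which $h(N)=\infty>0$ (this is the free-product lower bound for $1$-bounded entropy, recorded in the paper's Fact on examples with $h>0$), yields the specific non-equivalence claim.

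For the ``moreover'' clause, suppose toward a contradiction that for some ultrafilters $\mathcal U,\mathcal V$ there is a (trace-preserving, unital) embedding $\iota\colon M^{\mathcal U}\hookrightarrow N^{\mathcal V}$ whose restriction to the diagonal copy of $N$ inside $M^{\mathcal U}$ --- wait, more precisely, such that $N$ sits diagonally inside both: the hypothesis is that $N\subset M^{\mathcal U}$ diagonally and $\iota$ extends the diagonal inclusion $N\hookrightarrow N^{\mathcal V}$. Then $h(N)\le h(N:N^{\mathcal V})$, and by the monotonicity of $1$-bounded entropy in the presence of a larger algebra, $h(N:N^{\mathcal V})\le h(M^{\mathcal U}:N^{\mathcal V})=h(\iota(M^{\mathcal U}):N^{\mathcal V})\le h(M^{\mathcal U}:M^{\mathcal U})=h(M^{\mathcal U})\le 0$; here the key point is that $h(\,\cdot:\,\cdot\,)$ does not increase when one shrinks the ambient algebra along an embedding, a standard property of $h$ in the presence (the containment $\iota(M^{\mathcal U})\subseteq N^{\mathcal V}$ only helps). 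So again $h(N)\le 0$, contradicting $h(N)>0$ whenever $N$ has positive $1$-bounded entropy.

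The genuinely hard part of the paper is of course \emph{not} this deduction but the two inputs it rests on: building $M$ non-Gamma yet with the prescribed commuting-Haar-unitary structure in $M^{\mathcal U}$ (which is where the iterated amalgamated free product over property (T) factors and the \cite{IPP} intertwining analysis enter, via Theorem \ref{amalgam}), and then upgrading that structure --- through the observation of Hayes from \cite[Corollary 4.8]{Hayes2018} that two orthogonal unitaries each commuting with a Haar unitary already force $h\le 0$ on the algebra they generate, combined with the additivity of $h$ under joins of subalgebras sharing a common diffuse subalgebra --- to the global bound $h(M^{\mathcal U})\le 0$. Once those are in hand, the present theorem is the short Keisler--Shelah-plus-monotonicity argument above; the only care needed is to handle arbitrary (possibly not countably cofinal) $\mathcal U$, which is immediate since $M^{\mathcal U}\cong M$ for non-countably-cofinal $\mathcal U$ and $h(M)\le 0$ as well.
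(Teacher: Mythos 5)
Your proof is correct and essentially identical to the paper's: both invoke Keisler--Shelah to reduce elementary equivalence to an isomorphism of ultrapowers, then apply Theorem \ref{h=0} together with Facts \ref{fact 1} and \ref{in the presence of the ultra} to deduce $0<h(N)\leq h(M^{\mathcal U})\leq 0$, a contradiction. One small caution: your verbal gloss, that $h(\,\cdot:\,\cdot\,)$ ``does not increase when one shrinks the ambient algebra,'' states the monotonicity backwards --- Fact \ref{fact 1} shows that shrinking the ambient from $N^{\mathcal V}$ to $\iota(M^{\mathcal U})$ can only \emph{increase} $h$ --- but the inequalities you actually write down are applied in the correct direction, so the argument stands.
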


Examples of tracial von Neumann algebras $(N,\tau)$ with $h(N)>0$ include the interpolated free group factors
$\text{L}(\mathbb F_t)$, for all $1<t\leq \infty$, and, more generally, any free product $N_1*N_2$ of two  Connes-embeddable diffuse tracial von Neumann algebras $(N_1,\tau_1)$ and $(N_2,\tau_2)$. (Moreover, $h(N)=\infty$ for such $N$; for this and additional examples, see Fact \ref{examples of h>0}). By Theorem \ref{corollary about non ee of free group factors with our thing},  $M$ is not elementarily equivalent to any such $N$, including $\text{L}(\mathbb F_2)$.
This gives the first explicit example of two non-elementarily equivalent non-Gamma II$_1$ factors, thus settling a problem posed at a 2018 workshop at the American Institute of Mathematics \cite[Problem 1.3]{AIM}, see also \cite{IP} and \cite[Problem U.2]{JessePL}. 

It has been speculated for some time that free probability theory is likely to shed light on the model-theoretic study of II$_1$ factors, see for instance Farah's ICM survey \cite[Section 5]{FarahICM} and  \cite{BIRS}. Offering positive evidence in this direction, Theorem \ref{corollary about non ee of free group factors with our thing} represents the first application of free probability to the model theory of II$_1$ factors.

Now we describe the key facets of our construction that allows  us to prove Theorem \ref{corollary about non ee of free group factors with our thing}.  The II$_1$ factor from Theorem \ref{corollary about non ee of free group factors with our thing} is built via a new iterative construction involving amalgamated free products (see Section \ref{construction}).    % This is the recipe for our construction of $M$ from Theorem \ref{corollary about non ee of free group factors with our thing}.    
By using techniques from Popa's deformation/rigidity theory, notably \cite{IPP}, and the notion of property (T), we are able to guarantee that $M$ is indeed non-Gamma. The main property  of our construction is presented in our second main theorem below.

\begin{main}\label{main lifting theorem}
There exists a separable II$_1$ factor $M$ without property Gamma which satisfies the following. For every countably cofinal ultrafilter $\mathcal U$  on a set $I$ and $u_1,u_2\in\mathcal U(M^{\mathcal U})$ with $u_1^2=u_2^3=1$ and $\{u_1\}''\perp\{u_2\}''$, there exist Haar unitaries $v_1,v_2\in M^{\mathcal U}$ such that $[u_1,v_1]=[u_2,v_2]=[v_1,v_2]=0$.
\end{main}

Two von Neumann subalgebras $P,Q$ of a tracial von Neumann algebra $(M,\tau)$ are called {orthogonal} (written $P\perp Q$) if $\tau(xy)=\tau(x)\tau(y)$, for every $x\in P,y\in Q$. 
 For the notion of a countably cofinal ultrafilter, see Definition \ref{cofinal}. Here we only note that every free ultrafilter on  $\mathbb N$ is countably cofinal.

The  construction in Theorem \ref{main lifting theorem} is designed to imply the following estimate  for the 1-bounded entropy, which we present as our next main theorem.

%I would like to order the results in the following way, we can discuss about this later: Here's my version:

%The following is our second main result, derived by using properties of the $1$-bounded entropy. 

\begin{main}
\label{h=0}
Let $M$ be  any II$_1$ factor satisfying the properties of Theorem \ref{main lifting theorem}. Then $h(M^{\mathcal U})\leq 0$, for every ultrafilter $\mathcal U$ on a set $I$.
\end{main}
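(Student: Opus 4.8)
The plan is to leverage the structural property guaranteed by Theorem \ref{main lifting theorem} together with the known behavior of $1$-bounded entropy under subalgebras, joins, and increasing unions. First I would reduce to the case of a countably cofinal ultrafilter $\mathcal U$: if $\mathcal U$ is not countably cofinal, then $M^{\mathcal U}\cong M$ (the ultrapower does not enlarge a separable II$_1$ factor in this case), and since $M$ is finitely generated and non-Gamma — in fact, by the remarks in the introduction, the construction already forces $h(M)\le 0$ via the presence of commuting Haar unitaries in $M^{\mathcal U}$ for a dense family of pairs $u_1,u_2$ — the bound holds. So assume $\mathcal U$ is countably cofinal on a set $I$.

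The core of the argument is to show $h(\{u_1,u_2\}'' : M^{\mathcal U})\le 0$ for every pair $u_1,u_2\in\mathcal U(M^{\mathcal U})$ with $u_1^2=u_2^3=1$ and $\{u_1\}''\perp\{u_2\}''$. Here I would invoke Fact \ref{twounitaries} (the observation extracted from the proof of \cite[Corollary 4.8]{Hayes2018}): if $u_1,u_2$ are unitaries generating a diffuse algebra and there exist pairwise commuting Haar unitaries $v_1,v_2$ in the ultrapower with $[u_i,v_i]=0$, then the $1$-bounded entropy of $\{u_1,u_2\}''$ in the presence of the ambient algebra is at most $0$. Theorem \ref{main lifting theorem} supplies exactly such $v_1,v_2\in M^{\mathcal U}$ — but one must be slightly careful, since Fact \ref{twounitaries} wants the $v_i$ inside $(M^{\mathcal U})^{\mathcal W}$ for a further ultrapower, which is harmless because $1$-bounded entropy in the presence is insensitive to passing to ultrapowers of the ambient algebra. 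One should also check that $\{u_1,u_2\}''$ is diffuse: this follows from orthogonality of $\{u_1\}''$ and $\{u_2\}''$, which makes the pair behave like generators of a free product $\Z/2\Z * \Z/3\Z$ (whose von Neumann algebra is diffuse) on the algebra they generate, or at least forces diffuseness directly from $\tau(u_1 u_2)=\tau(u_1)\tau(u_2)$ and similar relations.

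Next I would show that $M^{\mathcal U}$ is generated by such subalgebras sharing a common diffuse subalgebra. This is where the countably cofinal hypothesis and Lemma \ref{lift} enter: the property of $M$ in Theorem \ref{main lifting theorem} is stated already at the level of $M^{\mathcal U}$, so I can take the dense family of pairs $(u_1,u_2)$ in $M$ itself with $u_1^2=u_2^3=1$, $\{u_1\}''\perp\{u_2\}''$ — arranging that all these subalgebras $\{u_1,u_2\}''$ contain a fixed diffuse (even Haar-unitary-generated) subalgebra $A\subset M$, which is possible by building $A$ into the base of the iterative construction — and observe that their join over this dense family recovers all of $M$, hence a fortiori generates $M^{\mathcal U}$. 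Then Facts \ref{unions} and \ref{joins}, which say that $h(\cdot:M^{\mathcal U})$ of a join of subalgebras with common diffuse intersection is bounded by the supremum of the individual values, give $h(M^{\mathcal U}:M^{\mathcal U})=h(M^{\mathcal U})\le 0$.

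The main obstacle I anticipate is the bookkeeping in the last step: ensuring that the dense family of pairs $(u_1,u_2)$ can be chosen so that the generated subalgebras both (i) have a common diffuse subalgebra — needed to apply the join inequality, which fails without a shared diffuse piece — and (ii) actually join up to all of $M$. Getting (i) and (ii) simultaneously requires that the iterative amalgamated free product construction of Section \ref{construction} be set up with a fixed diffuse amalgam present throughout, and that the pairs of orthogonal torsion unitaries one adds at each stage be dense in an appropriate sense; verifying orthogonality $\{u_1\}''\perp\{u_2\}''$ is preserved and is genuinely available for a dense set is the delicate point, and is presumably exactly what the construction in Theorem \ref{main lifting theorem} is engineered to deliver. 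A secondary technical point is checking that $h$ in the presence is monotone and ultrapower-stable in the ways invoked; these are recorded in Subsection \ref{1bdd} and the cited Facts, so I would simply quote them.
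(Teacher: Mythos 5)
Your reduction to the countably cofinal case and your use of Fact \ref{twounitaries} to get $h(\{u_1,u_2\}'':M^{\mathcal U})\leq 0$ for each eligible pair are fine (though note Fact \ref{twounitaries} only requires $v_1,v_2\in M^{\mathcal U}$, not a further ultrapower, so the worry in your first paragraph is moot). The genuine gap is in the final step. You propose to take a dense family of pairs $(u_1,u_2)\in\mathcal U(M)\times\mathcal U(M)$ — unitaries in $M$ itself, arranged to share a diffuse subalgebra — and observe that the subalgebras $\{u_1,u_2\}''$ join up to $M$, ``hence a fortiori generate $M^{\mathcal U}$.'' That last inference is false. For $\mathcal U$ countably cofinal, $M^{\mathcal U}$ is a non-separable II$_1$ factor strictly containing $M$, and the join of subalgebras of $M$ can only recover $M$. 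What your argument would yield is $h(M:M^{\mathcal U})\leq 0$, i.e.\ $h(M)\leq 0$; but $h(M^{\mathcal U})\geq h(M:M^{\mathcal U})$ by Fact \ref{fact 1}, so this bounds the \emph{smaller} quantity and says nothing about $h(M^{\mathcal U})$. The whole force of Theorem \ref{main lifting theorem} is that the commuting Haar unitaries are supplied for pairs $u_1,u_2$ living in $M^{\mathcal U}$ — not merely in $M$ — and the proof must actually exploit that.

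The paper's proof handles this by a quite different device: it fixes an embedding of $S=\mathrm{L}(\Z/2\Z*\Z/2\Z)$ into $P=M^{\mathcal U}$, generated by projections $p,q$ with trace $\frac12$, sets $x=(1-q)(2p-1)(1-q)$, and lets $\mathcal F$ be the set of projections $r\in(1-q)P(1-q)$ with $\tau(xr)=0$. For each such $r$ the algebra $S_r=\{p,q,r,1-q-r\}''$ contains the diffuse $S$ and satisfies the orthogonality condition $\{p\}''\perp\{q,r,1-q-r\}''$, so Corollary \ref{23projections} gives $h(S_r:P)\leq 0$. Crucially, the projections $r$ run over a set that generates all of $(1-q)P(1-q)$ — that is the content of Lemma \ref{excision} — so $\bigvee_{r\in\mathcal F}S_r=S\vee(1-q)P(1-q)$, and one then plays the same game with $qPq$ and uses that $q\sim 1-q$ in $S$ to recover all of $P$. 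This ``sweep through all projections of the ultrapower'' step is exactly what is missing from your plan, and it is why Lemma \ref{excision} appears in the paper: without it there is no way to see that the algebras $S_r$ join up to the full ultrapower rather than just to some separable piece.

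A secondary point: you would also need to argue that a dense family of pairs of order-$2$ and order-$3$ unitaries in $M$ sharing a common diffuse subalgebra and generating $M$ actually exists; your sketch asserts this can be ``built into the base of the iterative construction,'' but an order-$2$ unitary generates only $\mathbb C^2$, so the common diffuse piece must sit inside the joint algebra $\{u_1,u_2\}''$ rather than in either factor, and making that compatible with both density and orthogonality is not obviously routine. The paper sidesteps this entirely because its $S_r$ all manifestly contain $S=\{p,q\}''$ by construction.
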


The above estimate  allows us to prove   the desired non-isomorphism of ultrapowers. Indeed, let $M$ be as in Theorem \ref{main lifting theorem}.
If $(N,\tau)$ is a tracial von Neumann algebra which is elementarily equivalent to $M$, then $M^{\mathcal U}\cong N^{\mathcal V}$, for some ultrafilters $\mathcal U,\mathcal V$. Properties of the $1$-bounded entropy give that $h(N)\leq h(N^{\mathcal V})$ (see Facts \ref{fact 1} and \ref{in the presence of the ultra}). The conclusion of Theorem \ref{corollary about non ee of free group factors with our thing} then follows immediately. We refer the reader to Remark \ref{GJ}, pointed out to us by I. Goldbring and D. Jekel, for an explicit sequence which differentiates the elementary classes of $M$ and $N$.

%Theorem \ref{h=0} implies that $h(N)\leq 0$, which leads to the following corollary:

Note that if $M$ is a II$_1$ factor with property Gamma, then $h(M^{\mathcal U})\leq 0$, for every ultrafilter $\mathcal U$ on a set $I$. Prior to the writing of this paper no examples of non-Gamma  II$_1$ factors  which satisfy this inequality were known.   Hence, Theorem \ref{h=0} is also of independent interest.

%\begin{mcor}\label{corollary about non ee of free group factors with our thing}
 %There exists a separable II$_1$ factor $M$ which does not have property Gamma and is not elementarily equivalent to 
%  any  tracial von Neumann algebra $(N,\tau)$ satisfying $h(N)>0$.
  
  %Moreover,  for any ultrafilters $\mathcal U,\mathcal V$ on sets $I,J$, respectively, there does not exist an embedding of $M^{\mathcal U}$ into $N^{\mathcal V}$ that contains the diagonal inclusion of $N$. 

%\end{mcor}

%Examples of tracial von Neumann algebras $(N,\tau)$ with $h(N)>0$ include the interpolated free group factors
%$\text{L}(\mathbb F_t)$, for all $1<t\leq \infty$, and, more generally, any free product $N_1*N_2$ of two  Connes-embeddable diffuse tracial von Neumann algebras $(N_1,\tau_1)$ and $(N_2,\tau_2)$. (Moreover, $h(N)=\infty$ for such $N$; for this and additional examples, see Fact \ref{examples of h>0}). By Corollary \ref{corollary about non ee of free group factors with our thing},  $M$ is not elementarily equivalent to any such $N$, including $\text{L}(\mathbb F_2)$.
%This gives the first example of two non-elementarily equivalent non-Gamma II$_1$ factors, thus settling the problem posed at a 2018 workshop at the American Institute of Mathematics \cite[Problem 1.3]{AIM} and also mentioned as \cite[Problem U.2]{JessePL}.

A II$_1$ factor is called {\it pseudocompact} if it is elementarily equivalent to a matrix ultraproduct (see \cite[Section 5]{FHS}). Pseudocompact factors cannot have property Gamma by \cite[Section 4]{FH} and  \cite[Theorem 5.1]{FHS}.
By combining Theorem \ref{h=0} with recent work of Jekel \cite{jekel} on matrix ultraproducts we obtain the first example of a non-Gamma II$_1$ factor which is not pseudocompact.

\begin{mcor}\label{Jekel corollary}
 There exists a separable II$_1$ factor $M$ without property Gamma which is not elementarily equivalent to $\prod_{\mathcal U}\mathbb M_{k_n}(\mathbb C)$, for any sequence $(k_n)\subset \mathbb N$ and any free ultrafilter $\mathcal U$ on $\mathbb N$.
\end{mcor}

\begin{remark}
The Connes Embedding Problem (CEP) asks if every separable II$_1$ factor embeds into $R^{\mathcal U}$, where $\mathcal U$ is a free ultrafilter on $\mathbb N$ \cite{Connes}. A negative answer to the CEP has been announced in the preprint \cite{CEP}.
Assuming $M_0$ is a  non-Connes-embeddable separable II$_1$ factor, then  $M=M_0*\text{L}(\mathbb Z)$ is a non-Gamma separable II$_1$ factor which is still not embeddable. Any such $M$ is  neither elementarily equivalent to any embeddable non-Gamma II$_1$ factor (e.g., $\text{L}(\mathbb F_2)$) nor pseudocompact. Moreover, assuming a negative answer to the CEP, \cite[Corollary 5.5]{GH2020} implies the existence of infinitely many elementary equivalence classes of non-Gamma II$_1$ factors. 
In contrast, our construction of a non-Gamma II$_1$ factor which is not elementarily equivalent to $\text{L}(\mathbb F_2)$ and not pseudocompact is explicit %(see Remark \ref{aoc}) 
and does not depend on the answer to the CEP, nor does it use techniques from \cite{CEP}. We note that it is open whether the II$_1$ factor we construct is Connes-embeddable. As such, it remains an open question to find examples of Connes-embeddable non-Gamma II$_1$ factors which are not elementarily equivalent. %(see \cite[Section 4]{GH2022}).
\end{remark}

\subsection*{Comments on the proofs of Theorems \ref{main lifting theorem} and \ref{h=0}}

The proof of Theorem \ref{main lifting theorem} relies on a new construction of II$_1$ factors which is of independent interest and is presented in Section \ref{construction}. This associates, via a $2$-step amalgamated free product procedure, to every II$_1$ factor $M_1$ and unitaries $u_1,u_2\in M_1$,  a tracial von Neumann algebra  $M_2$ generated by $M_1$ and Haar unitaries $v_1,v_2\in M_2$ satisfying  $[u_1,v_1]=[u_2,v_2]=[v_1,v_2]=0$.
When $\{u_1\}''\perp\{u_2\}''$, we use deformation/rigidity results from \cite{IPP}  to deduce that $M_2$ is a II$_1$ factor. Moreover, under this assumption, we show that any irreducible subfactor $Q\subset M_1$ is still irreducible in $M_2$, see Theorem \ref{amalgam}.

In Section \ref{proofs of main results},  assuming that  $M_1$ has property (T) and iterating the above construction, we get an increasing sequence of II$_1$ factors $(M_n)_{n\geq 1}$ whose inductive limit  $M:=(\cup_{n \geq 1}M_n)''$
is non-Gamma and has the following property. For a countable dense set of unitaries $u_1,u_2\in M$
 with $u_1^2=u_2^3=1$ and $\{u_1\}''\perp\{u_2\}''$ there are Haar unitaries $v_1,v_2\in M$ such that $[u_1,v_1]=[u_2,v_2]=[v_1,v_2]=0$.
 Using a result which allows us to lift unitaries $u_1,u_2\in M^{\mathcal U}$ with $u_1^2=u_2^3=1$ and $\{u_1\}''\perp\{u_2\}''$ (see Lemma \ref{lift}) we conclude that $M^{\mathcal U}$ satisfies the conclusion of Theorem \ref{main lifting theorem}.  The restriction to unitaries $u_1$ and $u_2$ of orders $2$ and $3$ is due to the fact that Lemma \ref{lift} only applies in this case.

 The statement of Theorem \ref{main lifting theorem} is partially inspired by  \cite[Corollary 4.8]{Hayes2018}. This shows that if a diffuse tracial von Neumann algebra $(M,\tau)$ has property (C$'$) introduced in \cite[Definition 3.6]{GalatanPopa}, then $h(M)\leq 0$. 
 In particular, \cite[Corollary 4.8]{Hayes2018} implies that $h(M)\leq 0$, for any diffuse von Neumann algebra $(M,\tau)$ that is generated by $u_1,\cdots,u_k\in\mathcal U(M)$ so that there exist pairwise commuting Haar unitaries $v_1,\cdots,v_k\in\mathcal U(M^{\mathcal U})$ with $[u_i,v_i]=0$, for any $1\leq i\leq k$. 
Property (C$'$) is an asymptotic commutativity property which weakens Popa's property (C) \cite{Popa83}. The latter, itself a weakening of property Gamma, was shown to fail for $\text{L}(\mathbb F_n)$, $2\leq n\leq \infty$, in \cite{Dykema97}. 

To outline the proof of Theorem \ref{h=0}, let $M$ be as in Theorem \ref{main lifting theorem} and $\mathcal U$ be a countably cofinal ultrafilter on a set $I$. 
Using an observation made in the proof of \cite[Corollary 4.8]{Hayes2018} (see Fact \ref{twounitaries}) we derive that
$h(\{u_1,u_2\}'':M^{\mathcal U})\leq 0$, for any $u_1,u_2\in\mathcal U(M^{\mathcal U})$ with $u_1^2=u_2^3=1$ and $\{u_1\}''\perp\{u_2\}''$. Here, $h(N:M)$ denotes the $1$-bounded entropy of $N$ in the presence of $M$, see Subsection \ref{1bdd}. On the other hand, 
$M^{\mathcal U}$ can be generated by a family of subalgebras of the form $\{u_1,u_2\}''$, where $u_1,u_2\in\mathcal U(M^{\mathcal U})$ satisfy $u_1^2=u_2^3=1$ and $\{u_1\}''\perp\{u_2\}''$, all containing a fixed diffuse subalgebra. Using the behavior of the $1$-bounded entropy with respect to joins (see Facts \ref{unions} and \ref{joins}), we conclude that $h(M^{\mathcal U})\leq 0$, for any countably cofinal ultrafilter $\mathcal U$. 
Since $h(M)\leq 0$ and $M^{\mathcal U}\cong M$ for any ultrafilter $\mathcal U$ that is not countably cofinal, Theorem \ref{h=0} follows.

%Some papers to cite about strongly 1-bounded: \cite{Jung2007}, \cite{JungS}, \cite{JungL2B}, \cite{Hayes2018},  \cite{Shl2015}, \cite{BrannanVergnioux}, \cite{FreePinsker}, \cite{BrannanVergnioux}, \cite{BenPT}, \cite{HJKE1}, \cite{HJKE2}, \cite{jekel}, \cite{quantumgroupsstuffs1b}
\subsection*{Acknowledgements:} We thank  Isaac Goldbring, David Jekel, Jesse Peterson, Sorin Popa and Stefaan Vaes for helpful comments.  

\section{Preliminaries}

\subsection{Tracial von Neumann algebras} 
Let $(M,\tau)$ be a tracial von Neumann algebra, i.e., a pair consisting of a von Neumann algebra $M$ and a faithful normal tracial state $\tau:M\rightarrow\mathbb C$. We denote by $\mathcal U(M)$ the group of unitaries of $M$ and by $M_{\text{sa}}$ the set of self-adjoint elements of $M$.
Given a self-adjoint set $S\subset M$, von Neumann's bicommutant theorem implies that $S''$ is the smallest unital von Neumann subalgebra of $M$ containing $S$. 
For von Neuman subalgebras $(M_\alpha)$ of $M$, we will use the notation  $\vee_\alpha M_\alpha$ for $(\cup_\alpha M_\alpha)''$.

For an ultrafilter $\mathcal U$ on a set $I$, we denote by $M^{\mathcal U}$ the tracial ultraproduct: the quotient $\ell^\infty(I,M)/\mathcal{J}$ by the closed ideal $\mathcal{J}\subset\ell^\infty(I,M)$  consisting of $x=(x_n)$ with $\lim\limits_{n\rightarrow\mathcal U}\|x_n\|_2= 0$.  
We have a natural diagonal inclusion $M\subset M^{\mathcal U}$ given by $x\mapsto (x_n)$, where $x_n=x$, for all $n\in I$.
A separable II$_1$ factor $M$ has {\it property Gamma} if  $M'\cap M^{\mathcal U}\neq \mathbb{C}1$, for a free  ultrafilter $\mathcal U$ on $\mathbb N$.  For more details on tracial ultraproducts, we refer the reader to  \cite[Appendix E]{BrownOzawa2008} and \cite[Section 5]{AP}. 

Two tracial von Neumann algebras $(M_1,\tau_1)$ and $(M_2,\tau_2)$ are said to be \emph{elementarily equivalent} if there exist ultrafilters $\mathcal U,\mathcal V$ on arbitrary sets $I,J$ such that $M_1^{\mathcal U} \cong M_2^{\mathcal V}$. This is the \emph{semantic} definition of elementary equivalence. The model theoretic (sometimes called syntactic) definition for elementary equivalence will not be stated in this paper, as it is equivalent to the semantic definition by deep results of Keisler-Shelah adapted to the continuous setting, see \cite[Section 2]{FHS} and \cite[Theorem 10.7]{hensoniovino}.

 %A separable tracial von Neumann algebra $M$ is said to be {\it full} if  the inner automorphisms Int($M$) form a closed subgroup of the group Aut($M$) of automorphisms of $M$ with the topology of norm pointwise convergence in  the predual $M_*$. Work of Connes (see \cite{CONNES1974415}) shows that for separable II$_1$ factors, fullness is equivalent to the negation of property Gamma. 

A key tool in our work is the amalgamated free product construction for tracial von Neumann algebras. 
Let $(M_1,\tau_1)$ and $(M_2,\tau_2)$ be tracial von Neumann algebras with a common von Neumann subalgebra $B$ such that ${\tau_1}_{|B}={\tau_2}_{|B}$. We denote by $M=M_1*_{B}M_2$ the amalgamated free product with its canonical trace $\tau$. See \cite{Popa93} and \cite{VDN1992} for more details on the construction.
 
 To prove that the II$_1$ factors we construct do not have property Gamma, we will use property (T) and Popa's intertwining techniques.
 
A II$_1$ factor has {\it property (T)}  \cite{ConnesJones} (see also \cite{Po01}) if for every $\varepsilon>0$, there are $F\subset M$ finite and $\delta>0$ such that for any Hilbert $M$-$M$-bimodule $\mathcal H$ and unit vector $\xi\in\mathcal H$ with $\max_{x\in F}\|x\xi-\xi x\|\leq\delta$, there exists $\eta\in\mathcal H$ satisfying $\|\eta-\xi\|\leq\varepsilon$ and $x\eta=\eta x$, for every $x\in M$.
 %\begin{definition}[see {\cite{ConnesJones}}]A II$_1$ factor $M$ has \emph{property (T)} if there are a finite $F \subset M$ and $\kappa > 0$, such that for every Hilbert $M$-$M$-bimodule $\mathcal{H}$ we have that   $$\|\xi-P(\xi)\|\leq \kappa\cdot\max_{x\in F} \|[x,\xi]\|,\;\;\;\forall \xi\in\mathcal H,$$ where $P$ is the orthogonal projection onto the $M$-central vectors in $\mathcal{H}$.\end{definition}
Let $\Gamma$ be an icc countable group with property (T); for instance,  take $\Gamma=\text{SL}_3(\mathbb Z)$ by \cite{KazhdanTDef}. Then $M=\text{L}(\Gamma)$ is a II$_1
$ factor with property (T), see \cite[Theorem 2]{ConnesJones} and \cite[Theorem 4.1.7]{PopaCorr}.

In this paper, we will use the well-known fact that  II$_1$ factors with property (T) have weak spectral gap (in the sense of \cite{Popa_weakspgap}) in any inclusion:

\begin{proposition}\label{T fact}
Let $M$ be a II$_1$ factor and $M_1\subset M$ be a subfactor with property (T). Then $M_1'\cap M^{\mathcal U}= (M_1'\cap M)^{\mathcal U}$, for any  ultrafilter $\mathcal U$ on a set $I$. 
\end{proposition}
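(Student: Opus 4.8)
The inclusion $(M_1'\cap M)^{\mathcal U}\subseteq M_1'\cap M^{\mathcal U}$ is immediate: any sequence $(x_n)$ with $x_n\in M_1'\cap M$ commutes with the diagonal copy of $M_1$ inside $M^{\mathcal U}$. The content is the reverse inclusion, and the plan is to deduce it from the following ``weak spectral gap'' estimate: there exist a finite set $F\subset M_1$ and $\delta>0$ such that
\[
\|\xi-E_{M_1'\cap M}(\xi)\|_2\ \le\ \delta^{-1}\,\max_{a\in F}\|a\xi-\xi a\|_2\qquad\text{for every }\xi\in L^2(M),
\]
where $E_{M_1'\cap M}\colon L^2(M)\to L^2(M_1'\cap M)$ is the orthogonal projection, i.e.\ the trace-preserving conditional expectation. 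Granting this, the proposition follows quickly: given $x\in M_1'\cap M^{\mathcal U}$, pick a bounded representative $(x_n)_{n\in I}$ and apply the estimate in each coordinate to $\xi=x_n$. Since $a\in F\subset M_1$, we have $\|a x_n-x_n a\|_2\to 0$ along $\mathcal U$, hence $\|x_n-E_{M_1'\cap M}(x_n)\|_2\to 0$ along $\mathcal U$, so $x=(E_{M_1'\cap M}(x_n))_n$ as an element of $M^{\mathcal U}$, which lies in $(M_1'\cap M)^{\mathcal U}$.

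To prove the estimate I would apply property (T) of $M_1$ to the $M_1$-$M_1$ bimodule $\mathcal H=L^2(M)$. Taking $\varepsilon=1/2$ in the definition of property (T) yields a finite $F\subset M_1$ and $\delta>0$ such that every unit vector $\xi\in L^2(M)$ with $\max_{a\in F}\|a\xi-\xi a\|\le\delta$ is within $1/2$ of an $M_1$-central vector $\eta$ (one with $a\eta=\eta a$ for all $a\in M_1$). The key point is that every such $\eta$ lies in $L^2(M_1'\cap M)$: indeed $a\eta=\eta a$ for all $a\in M_1$ forces $u\eta u^*=\eta$ for all $u\in\mathcal U(M_1)$, so $\eta^*\eta\in L^1(M)$ commutes with $M_1$; cutting $\eta$ down by the spectral projections $q_k=\mathbf 1_{[0,k]}(\eta^*\eta)\in M_1'\cap M$ exhibits $\eta=\lim_k\eta q_k$ in $\|\cdot\|_2$ as a limit of bounded $M_1$-central elements, which lie in $M_1'\cap M$. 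Consequently, if $\xi\in L^2(M)$ is a unit vector orthogonal to $L^2(M_1'\cap M)$, then $\|\eta-\xi\|^2=\|\eta\|^2+\|\xi\|^2\ge 1>1/4$ for every $M_1$-central $\eta$, so the hypothesis of property (T) must fail, i.e.\ $\max_{a\in F}\|a\xi-\xi a\|>\delta$. By homogeneity, $\max_{a\in F}\|a\zeta-\zeta a\|_2\ge\delta\|\zeta\|_2$ for every $\zeta\in L^2(M)$ orthogonal to $L^2(M_1'\cap M)$. Finally, for arbitrary $\xi\in L^2(M)$ write $\xi=E_{M_1'\cap M}(\xi)+\zeta$ with $\zeta\perp L^2(M_1'\cap M)$; since $E_{M_1'\cap M}(\xi)$ commutes with $M_1$, we have $a\zeta-\zeta a=a\xi-\xi a$ for $a\in M_1$, and the displayed estimate follows.

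The main obstacle is the identification of the $M_1$-central vectors of the bimodule $L^2(M)$ with $L^2(M_1'\cap M)$; this is the only place the tracial structure enters, through the spectral-truncation argument above (and it is essentially the content of Popa's weak spectral gap property). Everything else is routine bookkeeping: that conditional expectations commute with the ultraproduct construction on bounded sequences, that $E_{M_1'\cap M}$ commutes with the diagonal action of $\mathcal U(M_1)$, and that a bounded representative of $x$ can be chosen. One can also bypass the explicit estimate and argue by contradiction directly: if $x\in M_1'\cap M^{\mathcal U}$ had $\|x-E_{(M_1'\cap M)^{\mathcal U}}(x)\|_2=c>0$, the normalized vectors $\xi_n=(x_n-E_{M_1'\cap M}(x_n))/\|x_n-E_{M_1'\cap M}(x_n)\|_2\in L^2(M)$ would be almost $M_1$-central along $\mathcal U$ yet orthogonal to $L^2(M_1'\cap M)$, contradicting property (T) exactly as above. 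I would present the estimate version, since it isolates the role of property (T) cleanly and matches the ``weak spectral gap'' terminology.
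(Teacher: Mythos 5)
Your proof is correct. The paper does not prove Proposition~\ref{T fact} but cites it as the well-known weak spectral gap property of property (T) inclusions, and your argument is precisely the standard one: apply property (T) to the $M_1$-$M_1$-bimodule $L^2(M)$ with $\varepsilon=1/2$, identify the $M_1$-central vectors of $L^2(M)$ with $L^2(M_1'\cap M)$ via the spectral truncation argument, deduce the uniform estimate $\|\xi-E_{M_1'\cap M}(\xi)\|_2\le\delta^{-1}\max_{a\in F}\|a\xi-\xi a\|_2$ for a finite $F\subset M_1$ independent of $M$, and apply it coordinatewise to a bounded representative of an element of $M_1'\cap M^{\mathcal U}$.
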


Conversely, if the equality $M_1'\cap M^{\mathcal U}=(M_1'\cap M)^{\mathcal U}$ holds for every II$_1$ factor $M$ containing $M_1$ and every ultrafilter $\mathcal U$ on $\mathbb  N$, then $M_1$ must have property (T), as shown recently in \cite{Tan}.

%\begin{proof}Let $u=(u_i)\in \mathcal{U}(M_1'\cap M^\omega$). Let $F,\kappa$ witness the definition of Property (T) for $M$. Then by considering the $M-M$ bimodule $L^2(M_1)$ and the vectors $\hat{u_i}$, we have by  definition  that  $\|\hat{u_i}-Proj_{\mathcal{H}^{M_1}}(\hat{u_i})\| = \|u_i-\mathbb{E}_{M_1'\cap M}(u_i)\| \leq \|[x,u_i]\|_2$.  We see immediately that $(\mathbb{E}_{M_1'\cap M}(u_i))= (u_i)$ as required.   \end{proof}

\begin{theorem}[see \cite{PopaStrongRigidity}]\label{popa intertwining} Let $(M,\tau)$ be a separable tracial von Neumann algebra and let $P\subset pM p,Q\subset M$ be von Neumann subalgebras. 
Then the following conditions  are equivalent:

\begin{enumerate}
\item There exist projections $p_0\in P, q_0\in Q$, a $*$-homomorphism $\theta:p_0P p_0\rightarrow q_0Q q_0$  and a non-zero partial isometry $v\in q_0M p_0$ such that $\theta(x)v=vx$, for all $x\in p_0P p_0$.

%\item There exists a non-zero element $a\in P'\cap p\langle M,e_Q\rangle p$ such that $a\geq 0$ and $\text{Tr}(a)<\infty$.

\item There is no sequence $u_n\in\mathcal U(P)$ satisfying $\|E_Q(x^*u_ny)\|_2\rightarrow 0$, for all $x,y\in pM$.
\end{enumerate}

%Moreover, if $P p'\prec_{M}Q$ for all projections $0\neq p'\in P'\cap pM p$, then we write $P\prec^{s}_{M}Q$.
\end{theorem}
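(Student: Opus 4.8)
The plan is to prove $(1)\Leftrightarrow(2)$ by interposing a third condition phrased inside Jones' basic construction. Write $\langle M,e_Q\rangle\subset B(L^2(M))$ for the von Neumann algebra generated by $M$ and the orthogonal projection $e_Q\colon L^2(M)\to L^2(Q)$; it carries a faithful normal semifinite trace $\Tr$ with $\Tr(xe_Qy)=\tau(xy)$ for $x,y\in M$, and $pe_Qp$ has finite trace $\tau(p)$. I will show that each of $(1)$ and $(2)$ is equivalent to the statement $(\ast)$: \emph{there is a nonzero $a\in\langle M,e_Q\rangle$ with $a\ge 0$, $a=pap$, $\Tr(a)<\infty$, and $ua=au$ for every $u\in\mathcal U(P)$} (and by spectral theory one may then take $a$ to be a projection $f$).

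The link between $(2)$ and $(\ast)$ rests on the identity, valid for $x,y\in pM$ and $u\in\mathcal U(P)$,
\[
\|E_Q(x^*uy)\|_2^2=\langle u(ye_Qy^*)u^*,\;xe_Qx^*\rangle_{2,\Tr},
\]
so that, for a finite set $F\subset pM$ and $D_F:=\sum_{x\in F}xe_Qx^*\ge 0$, one gets $\sum_{x,y\in F}\|E_Q(x^*uy)\|_2^2=\langle uD_Fu^*,D_F\rangle_{2,\Tr}$. For $(\ast)\Rightarrow(2)$: if a sequence $u_n\in\mathcal U(P)$ witnessing $\neg(2)$ existed, then $\langle u_nD_Fu_n^*,D_F\rangle_{2,\Tr}\to 0$ for every finite $F$; since $a$, being a finite-trace positive element with $a=pap$, is a $\|\cdot\|_{2,\Tr}$-limit of such $D_F$ (a standard density fact in the basic construction), a Cauchy--Schwarz estimate yields $\Tr(a^2)=\langle u_nau_n^*,a\rangle_{2,\Tr}\to 0$, contradicting $a\neq 0$. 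For $(2)\Rightarrow(\ast)$: if $(\ast)$ failed, then for each finite $F\subset pM$ and $\varepsilon>0$ there would be $u\in\mathcal U(P)$ with $\langle uD_Fu^*,D_F\rangle_{2,\Tr}<\varepsilon$ --- otherwise the $\|\cdot\|_{2,\Tr}$-minimal element $a$ of the norm-closed, $\mathcal U(P)$-conjugation-invariant convex hull of $\{uD_Fu^*:u\in\mathcal U(P)\}$ would, by uniqueness of the minimizer, commute with $\mathcal U(P)$, satisfy $a=pap$ and $\Tr(a)\le\Tr(D_F)<\infty$, and be nonzero since $\langle a,D_F\rangle_{2,\Tr}\ge\varepsilon$, contradicting the failure of $(\ast)$. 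Exhausting $pM$ by a countable $\|\cdot\|_2$-dense set (here separability of $M$ enters) and diagonalizing then produces a sequence witnessing $\neg(2)$.

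It remains to match $(\ast)$ with $(1)$ via the standard dictionary between intertwiners and finitely generated Hilbert bimodules. For $(\ast)\Rightarrow(1)$: given a projection $f$ as in $(\ast)$, the subspace $\cK:=fL^2(M)\subset pL^2(M)$ is a $P$--$Q$ subbimodule (left $P$-invariant since $f\in P'$, right $Q$-invariant since $f$ commutes with the right $Q$-action), and $\Tr(f)<\infty$ forces $\cK$ to be finitely generated as a right Hilbert $Q$-module by bounded ($M$-valued) vectors; analyzing the induced $*$-homomorphism from $P$ into a matrix amplification $\mathbb M_n(\mathbb C)\otimes Q$ and compressing by a suitable projection produces $p_0\in P$, $q_0\in Q$, the $*$-homomorphism $\theta\colon p_0Pp_0\to q_0Qq_0$, and the partial isometry $v\in q_0Mp_0$ with $\theta(x)v=vx$. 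For $(1)\Rightarrow(\ast)$: the projection onto $\overline{v^*L^2(Q)}\subset pL^2(M)$ has finite trace and commutes with $p_0Pp_0$, and a routine amplification lemma upgrades it to a finite-trace $P$-central projection, giving $(\ast)$. The main obstacle is precisely the structural direction $(\ast)\Rightarrow(1)$: converting the abstract finite-trace $P$-central projection of $\langle M,e_Q\rangle$ --- equivalently, the finitely generated $P$--$Q$ Hilbert bimodule inside $L^2(M)$ --- into an honest intertwining partial isometry \emph{lying inside $M$}; this needs the correspondence between finite-trace projections of $\langle M,e_Q\rangle$ and finitely generated right Hilbert $Q$-modules, together with the fact that such a module admits an $M$-valued generating set. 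The averaging argument and the pairing identities are, by comparison, routine.
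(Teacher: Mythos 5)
The paper offers no proof of Theorem \ref{popa intertwining}: it is quoted as a black box from Popa's work \cite{PopaStrongRigidity}, so there is no in-paper argument to compare yours against. Judged on its own terms, your outline is the standard proof of Popa's intertwining criterion: interpose the basic-construction condition $(\ast)$ (a nonzero positive finite-trace $P$-central element of $p\langle M,e_Q\rangle p$), prove $(2)\Leftrightarrow(\ast)$ via the pairing identity $\sum_{x,y\in F}\|E_Q(x^*uy)\|_2^2=\langle uD_Fu^*,D_F\rangle_{2,\Tr}$ together with the circumcenter/averaging argument and a diagonalization over a countable $\|\cdot\|_2$-dense subset of $pM$ (which is indeed exactly where separability enters), and prove $(1)\Leftrightarrow(\ast)$ through the dictionary between finite-trace $P$-central projections and finitely generated right Hilbert $Q$-submodules of $pL^2(M)$. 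The architecture is correct, and you have correctly isolated the two points that carry the real weight. Two remarks on the steps you defer. First, approximating a general positive finite-trace $a=pap$ in $\|\cdot\|_{2,\Tr}$ by elements literally of the form $D_F=\sum_{x\in F}xe_Qx^*$ is not quite how the standard argument goes: one approximates $a^{1/2}$ by finite sums $\sum_i x_ie_Qy_i$ and then works with elements of the form $Xe_Q^{(n)}X^*$ for row vectors $X$ over $pM$, to which the same pairing identity applies in matrix-amplified form; the Cauchy--Schwarz perturbation you describe then goes through verbatim. Second, the passage from the finitely generated $P$--$Q$ bimodule to an intertwiner lying inside $M$ (choice of bounded vectors, the $*$-homomorphism into $q_0(\mathbb M_n(\mathbb C)\otimes Q)q_0$, and the reduction to a corner of $Q$ itself) is precisely the content of Popa's proof and must be written out; as a sketch, flagging it as the main structural step is the right call, but it is the part a referee would insist on seeing in full.
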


If one of these equivalent conditions holds,  we write $P\prec_{M}Q$, and say that \emph{a corner of $P$ embeds into $Q$ inside $M$.}

\subsection{1-bounded entropy}\label{1bdd}

%While studying applications of Voiculescu's free entropy theory to the structure of von Neumann algebras, one encounters the \emph{1-bounded entropy $h$}. It is a numerical invariant of finite von Neumann algebras defined explicitly by Hayes in \cite{Hayes2018}, following seminal ideas of Jung in the context of his theory of strong 1-boundedness \cite{Jung2007}. For some recent results in this area see \cite{Jung2007}, \cite{JungS}, \cite{JungL2B}, \cite{Hayes2018},  \cite{Shl2015}, \cite{BrannanVergnioux}, \cite{FreePinsker}, \cite{HJKE1}, \cite{HJKE2}, \cite{jekel}. 

We recall some background for 1-bounded entropy theory (see \cite{Hayes2018}, \cite{Jung2007}) and direct the reader to \cite[Section 2.3]{FreePinsker} and \cite[Sections 2.2 and 2.3]{HJKE1} for a more detailed exposition. 
For a tracial von Neumann algebra $(M,\tau)$ and $X\in M_{\text{sa}}^d$, the {\it law} of $X$ is the linear functional $\ell_X:\mathbb{C}\langle t_1,\dots,t_d\rangle \to \mathbb{C}$ given by $\ell_X (f) = \tau(f(X))$.
Let $\Sigma_{d,R}$ be the set of all linear maps $\ell: \mathbb{C}\langle t_1,\dots,t_d\rangle \to \mathbb{C}$ satisfying that there exists a finite von Neumann algebra $(M,\tau)$ and $X \in M_{\text{sa}}^d$ such that $\ell=\ell_X$ and $\|x\| \leq R$ for all $x\in X$. We equip $\Sigma_{d,R}$ with the weak$^*$ topology.

We describe the orbital version of 1-bounded entropy (see Definition A.2 in \cite{Hayes2018}). Let $(M,\tau)$ be a diffuse tracial von Neumann algebra, and $X, Y\subset M_{\text{sa}}$ finite such that $\|x\| \leq R$ for all $x\in X\cup Y$.  Following \cite{VoiculescuFreeEntropy2}, for each weak$^*$ neighborhood $\mathcal{O}$ of $\ell_{X\sqcup Y}$ in $\Sigma_{d,R}$ and $n \in \mathbb{N}$, we define
\[
\Gamma_R^{(n)}(X:Y; \mathcal{O}) =  \{A \in \mathbb{M}_n(\mathbb{C})_{\text{sa}}^{X}: \exists B\in \mathbb{M}_n(\mathbb{C})_{\text{sa}}^{Y} \text{ such that }  \ell_{A\sqcup B} \in \mathcal{O}, \|A_x\|,\|B_y\|\leq R, \forall x\in X,y\in Y \}.
\]

Given $d,n\in \mathbb{N}$, $\varepsilon>0$ and $\Omega,\Xi\subseteq \mathbb{M}_{n}(\mathbb{C})^{d}$, then $\Xi$ is said to \emph{$(\varepsilon,\|\cdot\|_2)$-cover $\Omega$} if for every $A\in \Omega$, there is $B\in \Xi$ with $\|A-B\|_{2}<\varepsilon$. Define the \emph{covering number} $K_{\varepsilon}(\Omega,\|\cdot\|_{2})$ of $\Omega\subseteq \mathbb{M}_{n}(\mathbb{C})^{d}$ as the minimal cardinality of a set that $(\varepsilon,\|\cdot\|_{2})$-covers $\Omega$. 
We say that $\Xi$ \emph{orbitally $(\varepsilon,\|\cdot\|_2)$-covers} $\Omega$ if for every $A\in\Omega$, there is a $B \in \Xi$ and an $n \times n$ unitary matrix $V$ so that
$\|A-VBV^*\|_2 < \varepsilon$.
Define the \emph{orbital covering number} $K^{\textnormal{orb}}_\varepsilon(\Omega,\|\cdot\|_2)$ as the minimal cardinality of a set that orbitally $(\varepsilon,\|\cdot\|_2)$-covers $\Omega$.

%Let $R\in [0,\infty)$ be such that $\|x\|_\infty < R$.
% Given $n \in \bN$, $\varepsilon > 0$ and $\Omega,\Xi \subseteq \mathbb{M}_{n}(\bC)^d$ we say that $\Xi$ \emph{orbitally $(\varepsilon,\norm{\cdot}_2)$-covers} $\Omega$ if for every $A\in\Omega$, there is a $B \in \Xi$ and an $n \times n$ unitary matrix $V$ so that
% \[
% \norm{A-VBV^*}_2 < \varepsilon.
% \]
% We define the \emph{orbital covering number} $K_{\varepsilon}^{\textnormal{orb}}(\Omega,\norm{\cdot}_2)$ as the minimal cardinality of a set of $\Omega_0$ that orbitally $(\varepsilon,\norm{\cdot}_2)$-covers $\Omega$. Since we will usually be concerned with $\|\cdot\|_{2}$-norms we will frequently drop $\|\cdot\|_{2}$ from the notation and use $K_{\varespilon}^{\textnormal{orb}}(\Omega)$ instead of $K_{\varepsilon}^{\textnormal{orb}}(\Omega,\|\cdot\|_{2})$.
% Let $R\in [0,\infty)$ be such that $\norm{x}_\infty < R$.
Let $X_0,Y_0\subset M_{\text{sa}}$ not necessarily finite, satisfying $X_0''\subset Y_0''$ and $\|x\|\leq R$ for all $x\in X_0\cup Y_0$. Let $X,Y$ be finite subsets of $X_0$, $Y_0$ respectively. For a weak$^{*}$-neighborhood $\mathcal{O}$ of $\ell_{X\sqcup Y}$, we define
\begin{align*}
h_{\varepsilon}(X:Y;\mathcal{O})& := \limsup_{n \to\infty}\frac{1}{n^2}\log K_{\varepsilon}^{\textnormal{orb}}(\Gamma_{R}^{(n)}(X:Y;\mathcal{O})), \\
h_{\varepsilon}(X:Y) &:= \inf_{\mathcal{O} \ni \ell_{X\sqcup Y}} h_{\varepsilon}(\mathcal{O}),\\ h_{\varepsilon}(X_0:Y_0)& := \sup_{X\subset_{\text{finite}} X_0} \inf_{Y\subset_{\text{finite}} Y_0} h_{\varepsilon}(X:Y)\\ h(X_0:Y_0)& := \sup_{\epsilon>0} h_{\varepsilon}(X_0:Y_0)
\end{align*}
%where the infimum is over all weak$^{*}$-neighborhoods $\mathcal{O}$ of $\ell_{x}$.  We then define
%\[
%h(x) := \sup_{\varepsilon > 0} h_{\varepsilon}(x).
%\]

Note that $h(X_1:Y_1)=h(X_2:Y_2)$ if $X_1''=X_2''$ and $Y_1''=Y_2''$ by \cite[Theorem A.9]{Hayes2018}. Hence, given a von Neumann subalgebra $N\subset M$, we unambiguously write $h(N:M)$ (and call it the {\it $1$-bounded entropy of $N$ in the presence of $M$}) to be $h(X:Y)$ for some generating sets $X$ of $N $ and $Y$ of $M$. 
We write $h(M)=h(M:M)$ and call it the {\it $1$-bounded entropy of $M$}.
 %In words, $h$ is the exponential growth rate of the covering numbers of Voiculescu's microstate spaces (see \cite{VoiculescuFreeEntropy2}) up to unitary conjugation.  For an inclusion $N\leq M$ of tracial von Neumann algebras, the \emph{$1$-bounded entropy of $N$ in the presence of $M$}, denoted $h(N:M)$, is defined by modifying the definition of $1$-bounded entropy to only measure the size of the space of microstates for $N$ which have an extension to microstates for $M$. See Section 2.2 and 2.3 in \cite{HJKE1} for a detailed and rigorous exposition.
 
 For the purposes of this article we recall the following facts about $h$: 
 
 \begin{fact}(see  \cite[2.3.3]{HJKE1})\label{fact 1}
 $h(N_{1}:M_{1})\leq h(N_{2}:M_{2})$ if $N_{1}\subset N_{2}\subset M_{2}\subset M_{1}$ and $N_{1}$ is diffuse.
 \end{fact}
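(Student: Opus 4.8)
The plan is to obtain the inequality as the composition of two elementary monotonicity properties of the relative $1$-bounded entropy, namely
\[
h(N_1:M_1)\ \leq\ h(N_2:M_1)\ \leq\ h(N_2:M_2),
\]
where the first step is monotonicity of $h(\,\cdot\,:M_1)$ in its inner argument and the second is anti-monotonicity of $h(N_2:\,\cdot\,)$ as the ambient algebra is enlarged. Since $N_1$ is diffuse and $N_1\subseteq N_2$, also $N_2$ is diffuse, so all three quantities are well defined.

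First I would normalize the generating data. Fix the norm bound $R=1$ and, invoking \cite[Theorem A.9]{Hayes2018} (independence of $h$ from the choice of generating sets of the diffuse inner algebra and of the ambient algebra), compute $h(N_1:M_1)$, $h(N_2:M_1)$ and $h(N_2:M_2)$ with $X_0$ taken to be the operator-norm unit ball of $(N_i)_{\text{sa}}$ and $Y_0$ the operator-norm unit ball of $(M_j)_{\text{sa}}$. The point of this step is that the microstate sets $\Gamma^{(n)}_1(X:Y;\mathcal O)$ and the orbital covering numbers $K^{\textnormal{orb}}_\varepsilon$ entering the definition of $h_\varepsilon$ depend only on the finite tuples $X,Y$ together with $n$, $\mathcal O$ and $R$, and not on which ambient sets $X_0,Y_0$ one regards them as sitting inside. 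So after normalization the two inequalities reduce to comparing suprema and infima over nested index families.

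For the first inequality I fix $\varepsilon>0$ and use that the unit ball of $(N_1)_{\text{sa}}$ is contained in that of $(N_2)_{\text{sa}}$: the supremum over finite $X$ defining $h_\varepsilon(N_1:M_1)$ then runs over a subfamily of the supremum defining $h_\varepsilon(N_2:M_1)$, so $h_\varepsilon(N_1:M_1)\leq h_\varepsilon(N_2:M_1)$, and $\sup_{\varepsilon>0}$ gives $h(N_1:M_1)\leq h(N_2:M_1)$. For the second inequality I fix $\varepsilon>0$ and a finite $X\subset (N_2)_{\text{sa}}$, and use that the unit ball of $(M_2)_{\text{sa}}$ is contained in that of $(M_1)_{\text{sa}}$: the infimum over finite $Y\subset(M_1)_{\text{sa}}$ runs over a larger family than the infimum over finite $Y\subset(M_2)_{\text{sa}}$ and is therefore no larger, so $h_\varepsilon(X:(M_1)_{\text{sa}})\leq h_\varepsilon(X:(M_2)_{\text{sa}})$; taking $\sup$ over $X$ and over $\varepsilon$ yields $h(N_2:M_1)\leq h(N_2:M_2)$. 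Composing the two inequalities proves the statement.

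I expect that the only genuinely non-formal ingredient is the generating-set invariance \cite[Theorem A.9]{Hayes2018} used in the normalization step; everything else is bookkeeping with nested suprema and infima, and the orbital (rather than ordinary) nature of the covering numbers plays no role here, since enlarging or shrinking a set of operators does not interact with conjugation by a unitary matrix. Equivalently, one may simply cite \cite[Fact 2.3.3]{HJKE1}.
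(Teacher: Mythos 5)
The paper states this as a fact and simply cites \cite[2.3.3]{HJKE1}, giving no argument of its own, so there is no internal proof to compare against. Your derivation is correct: after normalizing all generating sets to operator-norm unit balls via the generating-set invariance \cite[Theorem A.9]{Hayes2018}, the inequality factors into the two elementary monotonicity steps $h(N_1:M_1)\leq h(N_2:M_1)$ (a supremum over a larger family of finite inner tuples is no smaller) and $h(N_2:M_1)\leq h(N_2:M_2)$ (an infimum over a larger family of finite ambient tuples is no larger). Your sup/inf bookkeeping at each stage is sound, and the observation that the orbital nature of the covering numbers plays no role is accurate, since the comparison happens entirely at the level of the index families over which the suprema and infima are taken, before any covering numbers are computed. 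The diffuseness of $N_1$ enters exactly as you say, to guarantee that \cite[Theorem A.9]{Hayes2018} applies and that all three quantities are well defined independently of the chosen generating sets. This is the standard derivation one would expect to find in the cited reference.
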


  \begin{fact}(see \cite[Proposition 4.5]{Hayes2018})\label{in the presence of the ultra}
 $h(N:M)=h(N:M^{\mathcal U})$ if $N\subset M$ is diffuse, and $\mathcal U$ is an ultrafilter on a set $I$. (Note that \cite[Proposition 4.5]{Hayes2018} asserts this fact for free ultrafilters $\mathcal U$. The fact is trivially true also for non-free (i.e., principal) ultrafilters.) \end{fact}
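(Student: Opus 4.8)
The plan is to prove the two inequalities $h(N:M^{\mathcal U})\le h(N:M)$ and $h(N:M)\le h(N:M^{\mathcal U})$ separately, the first being immediate and the second being the substantive point. Throughout I would fix a norm-bounded self-adjoint generating set $X_{0}$ of $N$ and bounded self-adjoint generating sets $Y_{0}$ of $M$ and $Y_{0}^{\mathcal U}$ of $M^{\mathcal U}$; by \cite[Theorem A.9]{Hayes2018} one may compute $h(N:M)=h(X_{0}:Y_{0})$ and $h(N:M^{\mathcal U})=h(X_{0}:Y_{0}^{\mathcal U})$ with these choices. The inequality $h(N:M^{\mathcal U})\le h(N:M)$ is then nothing but Fact \ref{fact 1} applied to the chain $N\subset N\subset M\subset M^{\mathcal U}$, using that $N$ is diffuse. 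Moreover, if $\mathcal U$ is principal the diagonal embedding $M\hookrightarrow M^{\mathcal U}$ is an isomorphism and the statement is trivial, so it remains to prove $h(N:M)\le h(N:M^{\mathcal U})$ for an arbitrary (in particular free) ultrafilter $\mathcal U$ on a set $I$.

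By unwinding the definitions (the outer suprema over $\varepsilon>0$ and over finite $X\subset X_{0}$ being common to both sides) it suffices to show: for every $\varepsilon>0$ and every finite $X\subset X_{0}$,
\[
\inf_{Y'\subset_{\text{fin}}Y_{0}}h_{\varepsilon}(X:Y')\;\le\;\inf_{Y\subset_{\text{fin}}Y_{0}^{\mathcal U}}h_{\varepsilon}(X:Y).
\]
Fix such $\varepsilon$ and $X$, fix a finite tuple $Y=(y_{1},\dots,y_{k})\subset Y_{0}^{\mathcal U}$ and a weak$^{*}$-neighborhood $\mathcal O$ of $\ell_{X\sqcup Y}$. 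I would produce a finite tuple $Y'\subset Y_{0}$ of the same length with $h_{\varepsilon}(X:Y')\le h_{\varepsilon}(X:Y;\mathcal O)$; taking the infimum over $\mathcal O$ then yields $\inf_{Y'\subset_{\text{fin}}Y_{0}}h_{\varepsilon}(X:Y')\le h_{\varepsilon}(X:Y)$, and the infimum over $Y$ gives the displayed inequality.

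To build $Y'$, represent each $y_{j}$ by a sequence $(y_{j}^{(i)})_{i\in I}$ in $M$ and apply continuous functional calculus (truncation) to arrange $\|y_{j}^{(i)}\|\le R$ for all $i$, where $R$ is a common norm bound for $X\cup Y$; since $\|y_{j}\|\le R$ this does not change the class of $y_{j}$ in $M^{\mathcal U}$. Because $X$ sits diagonally in $M^{\mathcal U}$, polynomial evaluation and the trace commute with the quotient defining $M^{\mathcal U}$, so $\ell_{X\sqcup Y}=\lim_{i\to\mathcal U}\ell_{X\sqcup Y^{(i)}}$ in the weak$^{*}$ topology of $\Sigma_{d,R}$, where $Y^{(i)}=(y_{1}^{(i)},\dots,y_{k}^{(i)})$ and $d=|X|+k$. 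Hence $\{i:\ell_{X\sqcup Y^{(i)}}\in\mathcal O\}\in\mathcal U$, in particular it is non-empty; I pick any such $i$ and set $Y':=Y^{(i)}\subset Y_{0}$. Choosing a weak$^{*}$-neighborhood $\mathcal O'$ of $\ell_{X\sqcup Y'}$ with $\mathcal O'\subset\mathcal O$ and using that $\Gamma_{R}^{(n)}(X:Z;\mathcal Q)$ depends on the tuple $Z$ only through its length, one gets $\Gamma_{R}^{(n)}(X:Y';\mathcal O')\subset\Gamma_{R}^{(n)}(X:Y';\mathcal O)=\Gamma_{R}^{(n)}(X:Y;\mathcal O)$ for all $n$, so the orbital covering numbers and in turn $h_{\varepsilon}$ satisfy $h_{\varepsilon}(X:Y')\le h_{\varepsilon}(X:Y';\mathcal O')\le h_{\varepsilon}(X:Y;\mathcal O)$, as required.

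The main point to be careful about, and the step a reader should dwell on, is the interface between the analytic and combinatorial sides of the definition: that the functional-calculus truncation of the representing sequences genuinely preserves the $\mathcal U$-limit, so that the joint laws $\ell_{X\sqcup Y^{(i)}}$ still converge to $\ell_{X\sqcup Y}$, and that the symbol $Y$ in $\Gamma_{R}^{(n)}(X:Y;\mathcal O)$ is pure bookkeeping for the coordinates being quantified out, so replacing the ultraproduct tuple $Y$ by a single-index tuple $Y'$ of the same length changes nothing but the label. Everything else is the routine supremum/infimum shuffle over finite generating subsets, weak$^{*}$-neighborhoods and $\varepsilon$, and none of it uses that $\mathcal U$ is free or that $I=\mathbb N$, which is exactly the mild strengthening of \cite[Proposition 4.5]{Hayes2018} recorded here.
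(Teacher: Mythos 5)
Your argument is essentially correct and, notably, supplies a proof that the paper itself does not give: Fact~\ref{in the presence of the ultra} is stated there purely by citation to \cite[Proposition 4.5]{Hayes2018}, together with the one-line observation that the principal case is trivial (which you also make, via the diagonal embedding being onto). The substantive direction $h(N:M)\le h(N:M^{\mathcal U})$ is exactly the ``lift the joint law through the ultraproduct'' argument one expects: for a finite $Y\subset M^{\mathcal U}$ and a basic weak$^{*}$-neighborhood $\mathcal O$ of $\ell_{X\sqcup Y}$, truncated coordinate representatives $Y^{(i)}\subset M$ satisfy $\ell_{X\sqcup Y^{(i)}}\to\ell_{X\sqcup Y}$ along $\mathcal U$ because $X$ is diagonal and the ultraproduct trace is the $\mathcal U$-limit of coordinate traces, so some $Y^{(i)}$ has joint law in $\mathcal O$; and since $\Gamma_R^{(n)}(X:Z;\mathcal O)$ sees $Z$ only through its cardinality, the orbital covering numbers for $Y^{(i)}$ and $Y$ agree on $\mathcal O$. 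As you point out, nothing in this uses freeness of $\mathcal U$ or countability of $I$, which is precisely the generalization the Fact records.

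One step in the write-up is looser than it should be: you fix \emph{arbitrary} bounded self-adjoint generating sets $X_0\subset N$, $Y_0\subset M$, $Y_0^{\mathcal U}\subset M^{\mathcal U}$ at the outset, but then assert $Y':=Y^{(i)}\subset Y_0$. The truncated representatives $y_j^{(i)}$ are merely norm-$R$ self-adjoint elements of $M$ and have no reason to lie in a preassigned $Y_0$, so the inequality $\inf_{Y'\subset_{\mathrm{fin}}Y_0}h_\varepsilon(X:Y')\le h_\varepsilon(X:Y)$ does not follow as stated. The clean fix, permitted by the generating-set invariance \cite[Theorem A.9]{Hayes2018} on which you are already relying, is to take $Y_0$ to be the entire closed $R$-ball of $M_{\mathrm{sa}}$ for a fixed $R$ bounding $X_0\cup Y_0^{\mathcal U}$; then the truncated representatives automatically lie in $Y_0$ and the rest of your infimum/supremum bookkeeping goes through unchanged.
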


 \begin{fact}\label{joins}(see \cite[Lemma A.12]{Hayes2018})
  $h(N_1\vee N_2:M)\leq h(N_1:M)+h(N_2:M)$ if $N_1,N_2\subset M$ and $N_1\cap N_2$ is diffuse.  
  In particular, $h(N_1\vee N_2)\leq h(N_1)+h(N_2)$.
 \end{fact}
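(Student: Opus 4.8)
I would prove the first inequality directly from the microstate definition of the $1$-bounded entropy; the ``in particular'' statement is then immediate, since applying it with $M=N_1\vee N_2$ and combining with Fact~\ref{fact 1} gives $h(N_1\vee N_2)\le h(N_1:N_1\vee N_2)+h(N_2:N_1\vee N_2)\le h(N_1)+h(N_2)$ (diffuseness of $N_1\cap N_2$ ensures that $N_1,N_2$ and $N_1\vee N_2$ are diffuse, so all quantities are defined).

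To set up the main estimate, fix a single self-adjoint $z\in N_1\cap N_2$ generating a diffuse abelian von Neumann subalgebra; as this subalgebra is $*$-isomorphic to $L^\infty[0,1]$, I may take $z$ with Lebesgue spectral distribution, rescaled so that $\|z\|\le R$. Choosing generating sets $X_1\supseteq\{z\}$ for $N_1$, $X_2\supseteq\{z\}$ for $N_2$ and $Y$ for $M$, the element $z$ is common and $X_1\cup X_2$ generates $N_1\vee N_2$. The whole argument reduces to showing: for all finite $X_1\ni z$, $X_2\ni z$, $Y$, every $\varepsilon>0$ and every weak$^{*}$-neighborhood $\mathcal O$ of $\ell_{X_1\cup X_2\cup Y}$, there are neighborhoods $\mathcal O_i$ of $\ell_{X_i\cup Y}$, arbitrarily small as $\mathcal O$ shrinks, and a quantity $\kappa(\varepsilon)$ with $\kappa(\varepsilon)\to0$ as $\varepsilon\to0$, such that
\begin{align*}
\limsup_n\tfrac1{n^{2}}\log K^{\textnormal{orb}}_{\varepsilon}\big(\Gamma^{(n)}_{R}(X_1\cup X_2:Y;\mathcal O)\big)&\le\limsup_n\tfrac1{n^{2}}\log K^{\textnormal{orb}}_{\varepsilon/4}\big(\Gamma^{(n)}_{R}(X_1:Y;\mathcal O_1)\big)\\
&\quad+\limsup_n\tfrac1{n^{2}}\log K^{\textnormal{orb}}_{\varepsilon/4}\big(\Gamma^{(n)}_{R}(X_2:Y;\mathcal O_2)\big)+\kappa(\varepsilon).
\end{align*}
Granting this, the conclusion follows by taking the infimum over $\mathcal O$, then the supremum over $\varepsilon$, which equals $\lim_{\varepsilon\to0}$ and annihilates $\kappa(\varepsilon)$, and finally the appropriate suprema and infima over finite generating subsets (using the standard monotonicity of these quantities).

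The proof of the displayed estimate has two moves. First, I would standardize the $z$-coordinate: shrinking $\mathcal O$, assume every $A$ with $\ell_A\in\mathcal O$ has the spectral distribution of its $z$-coordinate within $\delta$ (in $1$-Wasserstein distance) of Lebesgue measure on $[0,1]$, with $\delta$ small. Set $z_n:=\operatorname{diag}(1/n,2/n,\dots,1)\in\mathbb M_n(\mathbb C)_{\text{sa}}$; since Lebesgue measure is non-atomic, sorting eigenvalues shows any self-adjoint matrix whose spectral distribution is $\delta$-close to Lebesgue is within $O(\sqrt\delta)$ of $z_n$ in $\|\cdot\|_2$ after conjugation by one unitary. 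Passing to conjugates, one may then assume every element of a given orbital $(\varepsilon/4)$-cover $\Xi_i$ of $\Gamma^{(n)}_R(X_i:Y;\mathcal O_i)$ has $z$-coordinate within $O(\sqrt\delta)$ of $z_n$. Second, given $A=(A_1,A_2)\in\Gamma^{(n)}_R(X_1\cup X_2:Y;\mathcal O)$, I pick $B_i\in\Xi_i$ and unitaries $U_i$ with $\|A_i-U_iB_iU_i^{*}\|_2<\varepsilon/4$; comparing $z$-coordinates (all close to $z_n$) forces $W:=U_2^{*}U_1$ into the ``$\varepsilon$-almost-commutant'' $\mathcal S_\varepsilon:=\{W\in\mathcal U(n):\|Wz_nW^{*}-z_n\|_2\le\varepsilon\}$ once $\mathcal O$ and $\delta$ are small. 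Since $A_2\approx U_2B_2U_2^{*}=U_1(W^{*}B_2W)U_1^{*}$, replacing $W$ by an element of an $\varepsilon$-net $\mathcal N$ of $\mathcal S_\varepsilon$ shows that $A$ is, up to conjugation by the single unitary $U_1$ and up to $O(R\varepsilon)$ in $\|\cdot\|_2$, of the form $(B_1,W'^{*}B_2W')$ with $B_1\in\Xi_1$, $B_2\in\Xi_2$, $W'\in\mathcal N$. Hence $K^{\textnormal{orb}}_{O(R\varepsilon)}(\Gamma^{(n)}_R(X_1\cup X_2:Y;\mathcal O))\le|\Xi_1|\,|\Xi_2|\,K_\varepsilon(\mathcal S_\varepsilon,\|\cdot\|_2)$, and after rescaling $\varepsilon$ this is the claim with $\kappa(\varepsilon):=\limsup_n\tfrac1{n^{2}}\log K_\varepsilon(\mathcal S_\varepsilon,\|\cdot\|_2)$.

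The main obstacle is precisely the estimate $\kappa(\varepsilon)\to0$: one must show that a unitary $\|\cdot\|_2$-nearly commuting with the spread-out diagonal matrix $z_n$ is essentially supported on a band of width $O(\varepsilon)\,n$ around the diagonal, so that $\mathcal S_\varepsilon$ has $\|\cdot\|_2$-covering numbers of order $\exp\!\big(O(\varepsilon\log(1/\varepsilon))\,n^{2}\big)$. This is the one genuinely quantitative point, and it is exactly where diffuseness of $N_1\cap N_2$ enters essentially, through the choice of a generator $z$ with non-atomic, uniformly spread spectrum; it is also the reason the error term survives at each fixed $\varepsilon$ and is only removed in the limit. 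Everything else---conjugating spectrally close self-adjoint matrices by sorting eigenvalues, the Lipschitz bound for $W\mapsto W^{*}B_2W$, and the bookkeeping of neighborhoods and of the suprema/infima defining $h$---is routine.
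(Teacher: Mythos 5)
The paper does not prove this statement; it is taken as a black box from \cite[Lemma A.12]{Hayes2018}. Your overall strategy --- fix a common diffuse generator $z\in N_1\cap N_2$, standardize its microstate to a reference matrix $z_n$, compare the conjugating unitaries from the two covers, and reduce to covering the set of unitaries almost commuting with $z_n$ --- is indeed the Jung--Hayes route, and the surrounding scaffolding (the reduction to the displayed covering inequality, the monotonicity bookkeeping, and the ``in particular'' deduction from Fact~\ref{fact 1}) is correct.

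There is, however, a genuine quantitative gap at exactly the point you flag as the main obstacle. Because you orbitally cover $\Gamma^{(n)}_R(X_i:Y;\mathcal O_i)$ at scale $\varepsilon/4$, the unitary $W=U_2^*U_1$ is only forced into the $\varepsilon$-almost-commutant $\mathcal S_\varepsilon$ and no further: the $\varepsilon/4$ error from the cover dominates the $O(\sqrt{\delta})$ error from shrinking $\mathcal O$. And the band-truncation estimate does not show $\mathcal S_\varepsilon$ is ``essentially supported on a band of width $O(\varepsilon)n$''. From $\|Wz_nW^*-z_n\|_2\le\varepsilon$ with $z_n=\operatorname{diag}(k/n)$ one gets $\frac1n\sum_{i,j}|W_{ij}|^2(i-j)^2/n^2\le\varepsilon^2$, so the $\|\cdot\|_2$-mass of $W$ off the band $\{|i-j|\le T\}$ is at most $\varepsilon n/T$; at $T=C\varepsilon n$ this is $1/C$, i.e.\ $O(1)$, not $o(1)$. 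Running the band argument at covering scale $\rho$ gives $\limsup_n\frac1{n^2}\log K_\rho(\mathcal S_\varepsilon)\lesssim\frac{\varepsilon}{\rho}\log(1/\rho)$; at $\rho=\varepsilon$ this is $\asymp\log(1/\varepsilon)$, which does not vanish, so the quantity $\kappa(\varepsilon)=\limsup_n\frac1{n^2}\log K_\varepsilon(\mathcal S_\varepsilon)$ you define is not shown to go to $0$, and the claimed bound $\exp\bigl(O(\varepsilon\log(1/\varepsilon))n^2\bigr)$ does not follow from your argument. The standard fix is to decouple the two scales: cover $\Gamma^{(n)}_R(X_i:Y;\mathcal O_i)$ at a much finer scale $\delta'\ll\varepsilon$ (this is free, since $h(X_i:Y)=\sup_\rho h_\rho(X_i:Y)\ge h_{\delta'}(X_i:Y)$); then $W\in\mathcal S_{O(\delta')}$ and $\limsup_n\frac1{n^2}\log K_\varepsilon\bigl(\mathcal S_{O(\delta')}\bigr)\lesssim\frac{\delta'}{\varepsilon}\log(1/\varepsilon)\to 0$ as $\delta'\to0$, so the error term is annihilated already at fixed $\varepsilon$ before one sends $\varepsilon\to 0$. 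With that modification your proof goes through and agrees in spirit with Hayes's.
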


 \begin{fact}\label{unions}(see \cite[Lemma A.10]{Hayes2018})
  Assume that $(N_\alpha)_\alpha$ is an increasing chain of diffuse von Neumann subalgebras of $M$. Then $h(\bigvee_\alpha N_\alpha:M)=\sup_\alpha h(N_\alpha:M)$.
 \end{fact}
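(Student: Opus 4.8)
The plan is to establish the two inequalities making up the equality separately. The inequality $h(\bigvee_\alpha N_\alpha : M) \geq \sup_\alpha h(N_\alpha : M)$ is a direct consequence of monotonicity: fixing $\alpha$, the diffuseness of $N_\alpha$ together with the inclusions $N_\alpha \subseteq \bigvee_\beta N_\beta \subseteq M \subseteq M$ allows us to invoke Fact \ref{fact 1}, giving $h(N_\alpha : M) \leq h(\bigvee_\beta N_\beta : M)$; taking the supremum over $\alpha$ yields the claim. (Here $\bigvee_\beta N_\beta$ is diffuse since it contains the diffuse subalgebra $N_\alpha$, so all $1$-bounded entropies in sight are defined.)

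For the reverse inequality, set $N := \bigvee_\alpha N_\alpha$, let $B_\alpha$ be the norm-unit ball of $(N_\alpha)_{\text{sa}}$, and put $X_0 := \bigcup_\alpha B_\alpha$. Fix also a norm-bounded self-adjoint generating set $Y_0$ of $M$, and choose $R$ so that $\|z\| \leq R$ for all $z \in X_0 \cup Y_0$. Since $B_\alpha$ generates $N_\alpha$ and $X_0$ generates $N$, the invariance result \cite[Theorem A.9]{Hayes2018} gives $h(N_\alpha : M) = h(B_\alpha : Y_0)$ and $h(N : M) = h(X_0 : Y_0)$, where by definition
\[ h(X_0 : Y_0) = \sup_{\varepsilon > 0}\, \sup_{X \subset_{\text{finite}} X_0}\, \inf_{Y \subset_{\text{finite}} Y_0}\, h_\varepsilon(X : Y). \]
The crucial observation is that, because the chain $(N_\alpha)_\alpha$ is increasing (in particular upward directed), every finite subset $X$ of $X_0 = \bigcup_\alpha B_\alpha$ is already contained in a single $B_\beta$, while conversely $B_\beta \subseteq X_0$ for each $\beta$. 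Hence the supremum over finite $X \subset X_0$ rewrites as the supremum over $\beta$ of the supremum over finite $X \subset B_\beta$.

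Substituting this into the displayed formula and interchanging the suprema over $\varepsilon$ and over $\beta$ — which is harmless, all the quantities involved being suprema — we get
\[ h(X_0 : Y_0) = \sup_\beta \Big( \sup_{\varepsilon > 0}\, \sup_{X \subset_{\text{finite}} B_\beta}\, \inf_{Y \subset_{\text{finite}} Y_0}\, h_\varepsilon(X : Y) \Big) = \sup_\beta h(B_\beta : Y_0), \]
and the right-hand side is exactly $\sup_\beta h(N_\beta : M)$; combined with the first inequality this proves the equality. I do not expect any serious obstacle: no new estimate on (orbital) covering numbers is needed beyond what is already built into Fact \ref{fact 1} and \cite[Theorem A.9]{Hayes2018}. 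The only points requiring care are bookkeeping — one must work with norm-bounded generating sets so as to remain within the setup of Subsection \ref{1bdd}, and one must check that a finite subset of $\bigcup_\alpha (N_\alpha)_{\text{sa}}$ is absorbed by a single $N_\beta$, which is precisely the step that uses the hypothesis that the chain is increasing.
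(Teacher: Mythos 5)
Your proof is correct. Note that the paper does not supply its own argument for this fact, instead citing \cite[Lemma A.10]{Hayes2018} directly; your argument — deducing the easy inequality from Fact \ref{fact 1}, and for the reverse inequality choosing the unit balls $B_\alpha$ as generating sets, observing that any finite subset of $\bigcup_\alpha B_\alpha$ lies in a single $B_\beta$ because the chain is totally ordered, and then interchanging suprema in the sup-inf definition of $h$ — is precisely the natural unfolding of the definitions in Subsection \ref{1bdd} and is the standard way this lemma is established. The one place where a careless reader could go wrong, and which you handle correctly, is keeping a single constant $R$ valid simultaneously for all $B_\alpha$ (which is automatic with unit balls) and for the chosen bounded generating set $Y_0$ of $M$, so that all the quantities $h(B_\beta:Y_0)$ and $h(X_0:Y_0)$ are computed with a common truncation parameter before invoking the invariance result \cite[Theorem A.9]{Hayes2018}.
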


By \cite[Corollary 3.5]{Jung2007} and \cite[Proosition A.16]{Hayes2018}, $h(N)=\infty$ whenever $(N,\tau)$ is a tracial von Neumann algebra admitting a finite generating set $X\subset N_{\text{sa}}$ with  $\delta_0(X)>1$, where $\delta_0 $ is Voiculescu's modified free entropy dimension (see Section 6 of \cite{Voiculescu1996}). 

%the main source of  tracial von Neumann algebras that satisfy $h(N)>0$ comes from identifying a generating set $x$ with Voiculescu's free entropy dimension greater than 1, $\delta_0(x)>1$. 

\begin{fact}\label{examples of h>0}
The following tracial von Neumann algebras $(N,\tau)$ satisfy $h(N)>0$. The first five examples all arise from identifying generating sets $X$ satisfying $\delta_0(X)>1$, and thus $h(N)=\infty$.  

\begin{enumerate}
    %\item (see \cite[Lemma 3.7]{Jung}) $N_1*N_2$, where $$ 
    \item (see \cite[Lemma 3.7]{Jung2007})) $N_1*N_2$ where $(N_1,\tau_1)$ and $(N_2,\tau_2)$ are Connes-embeddable diffuse tracial von Neumann algebras.% such that $\text{dim}(N_1)\geq 3$ and $N_2\neq \mathbb{C}1$. Indeed 
    \item The free perturbation algebras of Voiculescu (see Theorem 4.1 in \cite{Brown}). 
    \item Many examples of amalgamated free products  $N_1*_{B}N_2$ where $B$ is  amenable (see Section 4 of \cite{BDJ2008} for precise examples).
    \item (see \cite{Aimsquare}) Graph products of finite dimensional tracial von Neumann algebras over trees where the cardinality of the vertex set is greater than or equal to 4.
  
    \item (see \cite{Shlyakhtenkononinnercocycles}, Theorem 3) Von Neumann algebras of Connes-embeddable nonamenable groups $\Gamma$ admitting non inner cocycles $c:\Gamma\rightarrow\mathbb C\Gamma$. 
      \item (see \cite{BenPT}, \cite{PTkilled}, \cite{PTcor}) Nonamenable von Neumann subalgebras of $\text{L}(\mathbb{F}_t)$ for $t>0$.
\end{enumerate}
\end{fact}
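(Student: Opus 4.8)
\emph{The approach.} Items (1)--(5) will all be handled by one mechanism, while item (6) needs a genuinely different and deeper input.

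\emph{Items (1)--(5).} For each of these I would exhibit a finite self-adjoint generating set $X$ with $\delta_0(X)>1$ and then quote the criterion recorded just before the statement: $\delta_0(X)>1$ forces $h(N)=\infty$ by \cite[Corollary 3.5]{Jung2007} and \cite[Proposition A.16]{Hayes2018}. The required free-entropy-dimension estimates are precisely the content of the cited works. For (1): since $N_1$ and $N_2$ are diffuse, each contains a copy of $\text{L}(\mathbb{Z})$ generated by a single self-adjoint element $h_i$ with $\delta_0(h_i)=1$, and these two copies are free inside $N_1*N_2$; adjoining $h_1,h_2$ to finite self-adjoint generating sets of (Connes-embeddable, finitely generated) $N_1$ and $N_2$ produces a generating set $X$ of $N_1*N_2$ with $\delta_0(X)\geq\delta_0(h_1)+\delta_0(h_2)=2$, using Voiculescu's additivity of $\delta_0$ under free products \cite{VoiculescuFreeEntropy2} --- this is \cite[Lemma 3.7]{Jung2007}. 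For (2): Voiculescu's free perturbation algebras are finitely generated and \cite[Theorem 4.1]{Brown} shows $\delta_0>1$ on natural generators. For (3): the amalgamated free products $N_1*_B N_2$ over an amenable $B$ singled out in \cite[Section 4]{BDJ2008} carry generating sets with $\delta_0>1$. For (4): a graph product of finitely many finite-dimensional tracial von Neumann algebras over a tree with at least four vertices is finitely generated and \cite{Aimsquare} computes $\delta_0>1$ for the obvious generators. For (5): given a Connes-embeddable nonamenable group $\Gamma$ with a non-inner cocycle $c:\Gamma\to\mathbb{C}\Gamma$, \cite[Theorem 3]{Shlyakhtenkononinnercocycles} uses $c$ to build a generating set of $\text{L}(\Gamma)$ with $\delta_0>1$.

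\emph{Item (6).} Here a nonamenable subalgebra $N\subseteq\text{L}(\mathbb{F}_t)$ need not be finitely generated, and even if it is it may admit no generating set of free entropy dimension exceeding $1$, so the route above fails. Instead I would invoke the recent resolution of the Peterson--Thom conjecture and the accompanying analysis of $1$-bounded entropy in free group factors (\cite{BenPT,PTkilled,PTcor}); the relevant consequence is that every diffuse von Neumann subalgebra $P\subseteq\text{L}(\mathbb{F}_t)$ with $h(P:\text{L}(\mathbb{F}_t))\leq 0$ is amenable. Since $N$ is nonamenable it is in particular diffuse, so $h(N:\text{L}(\mathbb{F}_t))>0$; Fact \ref{fact 1} (applied with $N\subseteq N\subseteq N\subseteq\text{L}(\mathbb{F}_t)$) then gives $h(N)=h(N:N)\geq h(N:\text{L}(\mathbb{F}_t))>0$.

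\emph{The main obstacle.} For (1)--(5) the task is purely a matter of locating and applying known free-entropy-dimension computations; essentially the only subtlety is that the criterion needs a \emph{finite} generating set, which is why finite generation enters in (1). The substance is concentrated in (6): it rests on the resolution of the Peterson--Thom conjecture, a deep recent theorem proved by random-matrix strong-convergence methods rather than by any elementary dimension estimate. Thus, if one wanted a self-contained proof of the Fact, item (6) would be the genuinely hard part.
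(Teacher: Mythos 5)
Since the statement is labelled a ``Fact'' and the paper provides no argument beyond the parenthetical citations (and the one-line criterion $\delta_0(X)>1\Rightarrow h(N)=\infty$ stated just before it), there is no ``paper's proof'' to match; your elaboration of what each citation does is the natural route and is broadly in the spirit of the paper. That said, there is one genuine error in your argument for item~(6), and two small wrinkles in~(1).

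For~(6) you write ``Since $N$ is nonamenable it is in particular diffuse,'' which is false: a nonamenable tracial von Neumann algebra can certainly have minimal projections (e.g.\ $\mathbb{C}p\,\oplus\,p^\perp\mathrm{L}(\mathbb{F}_t)p^\perp$ for a projection $p$ of trace $1/2$ is a nonamenable subalgebra of $\mathrm{L}(\mathbb{F}_t)$ that is not diffuse). Diffuseness is precisely the hypothesis required both by the Hayes reformulation of Peterson--Thom you quote and by Fact~\ref{fact 1} that you then apply with $N\subseteq N\subseteq N\subseteq\mathrm{L}(\mathbb{F}_t)$. The fix is routine but has to be said: pass to the diffuse central corner $zN$ of $N$ (which is still nonamenable, since the atomic part is amenable), note $zN\subset z\mathrm{L}(\mathbb{F}_t)z\cong\mathrm{L}(\mathbb{F}_{t'})$ by the compression formula, apply Peterson--Thom there to get $h(zN)>0$, and then relate $h(N)$ back to $h(zN)$ via the behaviour of $h$ under cut-downs by central projections. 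That last step needs a fact about $h$ and corners that is not among Facts~\ref{fact 1}--\ref{unions}, so as written the argument has a gap.

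For~(1), two remarks. First, you insert ``(Connes-embeddable, finitely generated)'' although the statement does not assume $N_1,N_2$ are finitely generated; the cited \cite[Lemma 3.7]{Jung2007} is supposed to cover the general case, so this restriction is unnecessary (and does change the scope of what you prove). Second, the displayed inequality $\delta_0(X)\geq\delta_0(h_1)+\delta_0(h_2)=2$ is not what Voiculescu's free additivity gives you, since $\delta_0$ is not monotone in the generating set in general. The correct chain is: additivity for free families gives $\delta_0(X_1\cup X_2)=\delta_0(X_1)+\delta_0(X_2)$ where $X_i$ generates $N_i$, and then Jung's hyperfinite monotonicity (applied to the diffuse abelian subalgebras $\{h_i\}''$) gives $\delta_0(X_i)\geq\delta_0(h_i)=1$, whence $\delta_0(X_1\cup X_2)\geq 2>1$. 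Your conclusion is right, but the reasoning as stated would not survive scrutiny.
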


 The following recent result of Jekel provides another family of examples:

  \begin{fact}(see \cite[Theorem 1.1]{jekel})\label{Jekel fact} Suppose that $h(N)>0$.
Let  $\{n_k\}_{k=1}^{\infty}$ be an increasing sequence of natural numbers and $\mathcal U$ be a free ultrafilter on $\mathbb N$. Let $\mathcal{M}=\prod_{\mathcal U} \mathbb{M}_{n_k}(\mathbb{C})$. %Suppose $N\hookrightarrow \mathcal{M}$, and $h(N)>0$. 
Then there exists an embedding $N\hookrightarrow \mathcal{M}$ such that $h(N: \mathcal{M})>0$. In particular $h(\mathcal{M})>0$.  \end{fact}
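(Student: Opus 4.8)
The plan is to construct an embedding $\iota\colon N\hookrightarrow\mathcal M$ so that the microstate spaces of $\iota(N)$ computed \emph{in the presence of} $\mathcal M$ remain exponentially large in the square of the ambient matrix dimension, and then to read off $h(N:\mathcal M)>0$ and $h(\mathcal M)>0$ from the monotonicity of $h$ (Fact \ref{fact 1}). A standard reduction lets us fix a finitely generated diffuse subalgebra $N_{0}\subseteq N$ with $h(N_{0}:N)>0$, generated by a finite tuple $X\subset N_{\text{sa}}$ with $\|X\|\le R$ that contains the real and imaginary parts of a Haar unitary of $N_{0}$; we also fix $\varepsilon>0$ and $c>0$ with $h_{\varepsilon}(X:Z)>c$ for every finite $Z\subset N_{\text{sa}}$. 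Unwinding the definitions, this says that for every weak$^{*}$ neighborhood $\mathcal O$ of $\ell_{X}$ the set $\mathcal D_{\mathcal O}:=\{m:\tfrac{1}{m^{2}}\log K^{\textnormal{orb}}_{\varepsilon}(\Gamma_{R}^{(m)}(X:X;\mathcal O))>c\}$ is infinite. By Fact \ref{fact 1}, it now suffices to build $\iota$ so that $h(\iota(N_{0}):\mathcal M)>0$: this gives $h(N:\mathcal M)\ge h(\iota(N_{0}):\mathcal M)>0$, and a further application of Fact \ref{fact 1} (with $\iota(N_{0})\subset\mathcal M\subset\mathcal M$) gives $h(\mathcal M)=h(\mathcal M:\mathcal M)\ge h(\iota(N_{0}):\mathcal M)>0$.

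Such an embedding exists at all because $h(N)>-\infty$ forces $N$ to be Connes-embeddable, so $N$ admits approximate microstates in every large matrix dimension and, zero-padding them into the dimensions $n_{k}$, one obtains embeddings $N\hookrightarrow\mathcal M$; the task is to choose one cleverly. The key structural input is that $\mathcal M$ is a matrix ultraproduct: each $\mathbb M_{n_{k}}(\mathbb C)$ is $2$-generated and, through noncommutative polynomials detecting a minimal projection, its $*$-moments encode the dimension $n_{k}$, so a matricial microstate for a fixed generating set $Y$ of $\mathcal M$ in dimension $\nu$ is, up to a small $\|\cdot\|_{2}$-perturbation, a unitary conjugate of a zero-padded copy of $\mathbb M_{n_{k}}(\mathbb C)\subset\mathbb M_{\nu}(\mathbb C)$ for some $k$ with $n_{k}$ comparable to $\nu$. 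For a suitably chosen $\iota$ this forces the microstate space $\Gamma_{R}^{(\nu)}(\iota(X):Y;\mathcal O)$ to be, orbitally and up to comparable parameters, a zero-padded unitary translate of $\Gamma_{R}^{(n_{k})}(X:X;\mathcal O)$. Since $X$ contains a Haar unitary, every microstate of $X$ is approximately unitary in one coordinate, and this rigidity makes the zero-padding cost nothing at the level of orbital covering numbers, so $K^{\textnormal{orb}}_{\varepsilon}(\Gamma_{R}^{(\nu)}(\iota(X):Y;\mathcal O))\gtrsim K^{\textnormal{orb}}_{\varepsilon}(\Gamma_{R}^{(n_{k})}(X:X;\mathcal O))$ for the relevant $n_{k}$. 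Taking the $\limsup$ over $\nu$ that computes $h(\iota(X):Y)=h(\iota(N_{0}):\mathcal M)$, it remains to guarantee that for $\mathcal U$-many indices $k$ the dimension $n_{k}$ supports a microstate space for $X$ of orbital covering number at least $e^{c'n_{k}^{2}}$ for some fixed $c'>0$; then $h(\iota(N_{0}):\mathcal M)\ge c'>0$ and we are done by the first paragraph.

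The step I expect to be the main obstacle is exactly this last one: $h(N)>0$ only makes each $\mathcal D_{\mathcal O}$ infinite, whereas a priori $\mathcal D_{\mathcal O}$ might entirely miss the prescribed sequence $\{n_{k}\}$, and naively combining good microstates from several smaller dimensions by block-diagonal sums degrades the covering rate like the reciprocal of the number of blocks. Jekel's resolution again uses the zero-padding rigidity, now in the opposite direction: padding a dimension-$m$ microstate into dimension $m+1$ shows that the covering rate $m\mapsto\tfrac{1}{m^{2}}\log K^{\textnormal{orb}}_{\varepsilon}(\Gamma_{R}^{(m)}(X:X;\mathcal O))$ cannot drop too fast as $m$ increases, which upgrades ``$\mathcal D_{\mathcal O}$ infinite'' to the assertion that this rate stays above a fixed positive constant on a set of dimensions dense enough, in the multiplicative sense, to meet every interval $[\eta n_{k},n_{k}]$ for a suitable fixed $\eta\in(0,1)$ and $\mathcal U$-almost every $k$. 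Combining such a near-maximal good dimension $m\in[\eta n_{k},n_{k}]$ with a bounded number of its translates and a zero pad then produces the required exponentially large microstate space in dimension $n_{k}$. Making all of this quantitative --- in particular controlling how the weak$^{*}$ neighborhoods deteriorate under repeated padding --- is the technical core, and is carried out in \cite[Theorem 1.1]{jekel}. Granting it, $h(\iota(N_{0}):\mathcal M)>0$, and hence $h(N:\mathcal M)>0$ and $h(\mathcal M)>0$.
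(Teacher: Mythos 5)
The paper offers no proof of this statement: it is a citation-only Fact attributed to \cite[Theorem 1.1]{jekel}, so there is no paper proof against which to compare your sketch. Assessing your sketch on its own terms: you identify a sensible high-level strategy (build an embedding so that microstates of a finitely generated diffuse $N_0\subset N$ remain plentiful in the presence of a generating set of $\mathcal{M}$, then apply Fact \ref{fact 1}) and you correctly isolate the essential difficulty, namely that $h(N)>0$ gives only a $\limsup$ over all matrix dimensions, while $\mathcal{M}=\prod_{\mathcal U}\mathbb M_{n_k}(\mathbb C)$ requires a positive covering rate along $\mathcal U$-many of the prescribed $n_k$, which could a priori miss every good dimension.

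However, the resolution you propose rests on two claims that are far from routine and that you yourself defer to the cited reference. The assertion that every $\mathbb M_\nu(\mathbb C)$-microstate of a fixed generating set $Y$ of $\mathcal M$ is, up to small $\|\cdot\|_2$-error, a unitary conjugate of a zero-padded copy of some $\mathbb M_{n_k}(\mathbb C)$ is not obvious: the microstate space of $Y$ at dimension $\nu$ contains block-diagonal mixtures and other perturbations that need not be close to the unitary orbit of a single padded matrix algebra. More critically, the ``zero-padding rigidity'' argument --- that $m\mapsto\frac{1}{m^2}\log K_{\varepsilon}^{\textnormal{orb}}(\Gamma_{R}^{(m)}(X:X;\mathcal O))$ cannot drop fast as $m$ increases --- is precisely the hard quantitative step: a naive zero pad of a microstate containing a unitary destroys unitarity, padding by a scalar changes the law, and tensoring alters the dimension multiplicatively rather than additively, so none of the obvious maneuvers yields the claimed near-monotonicity of the rate. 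Your sketch names these obstacles but does not overcome them; it reads as an informed heuristic guess at Jekel's strategy rather than a proof, and since the paper under review gives no argument for the Fact, I cannot confirm from it whether your heuristic matches Jekel's actual method.
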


%The next fact follows by adapting Proposition 6.4 in \cite{VoiculescuFreeEntropy2} to work with Voiculescu's microstates free entropy dimension in the presence: 

%  \begin{proposition}\label{microstates free entropy dimension in the presence } Let $N_1,N_2$ be Connes embeddable diffuse finite von Neumann algebras and let $M=N_1*N_2$ be the tracial free product. Let $A\subset N_1$ and $B\subset N_2$ with $a\subset A$, $b\subset B$ are generating tuples of self adjoint operators. Then $\delta_0(a,b: N_1*N_2)= \delta_0(a:N_1)+ \delta_0(b:N_2)$. \end{proposition}

%An immediate corollary of the above Proposition and Fact 2.3.4 in \cite{HJKE1} is the following fact: 

%\begin{fact}\label{Fact about having h(A:M)=\infty}  Let $N_1,N_2$ be Connes embeddable diffuse finite von Neumann algebras and let $M=N_1*N_2$ be the tracial free product. Let $A\cong L(\mathbb{Z}/2\mathbb{Z})\subset N_1$ and $B\cong L(\mathbb{Z}/3\mathbb{Z})\subset N_2$. Then $h(A*B: M)=\infty$. \end{fact}

%Indeed, since $A$ and $B$ are hyperfinite, the generating set $x=\{u_1,u_2\}$ of $A*B$  consisting of self adjoint generating unitaries of $A$ and of $B$ satisfies $$\delta_0(x:M)=\delta_0(u_1:A)+\delta_0(u_2:B)=\delta_0(u_1)+\delta_0(u_2)>1$$ and hence $h(x:M)=\infty$. 

The following fact follows easily from Fact \ref{joins}. This  observation appears in the proof of Corollary 4.8 in \cite{Hayes2018}. For completeness, we include a proof here.
\begin{fact}\label{twounitaries}
Assume that  $u_1,u_2\in \mathcal U(M)$ such that there are Haar unitaries $v_1,v_2\in M$ satisfying $[v_1,u_1]= [v_2,u_2]=[v_1,v_2]=0$. Then $h(\{u_1,u_2\}'':M)\leq 0$.
\end{fact}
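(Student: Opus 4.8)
The plan is to reduce the claim $h(\{u_1,u_2\}'':M)\leq 0$ to the subadditivity of $1$-bounded entropy under joins (Fact \ref{joins}), after showing that each of the two singly-generated pieces $\{u_1\}''$ and $\{u_2\}''$ has vanishing $1$-bounded entropy in the presence of $M$. First I would observe that since $v_i$ is a Haar unitary commuting with $u_i$, the von Neumann algebra $\{u_i\}''$ sits inside the relative commutant $\{v_i\}'\cap M$, and more importantly $\{u_i\}''$ commutes with the diffuse abelian algebra $\{v_i\}''$. The point is that a diffuse von Neumann algebra which commutes with a Haar unitary (equivalently, which admits a diffuse commutant containing a Haar unitary) has $1$-bounded entropy zero in the presence of any ambient algebra — this is exactly the content of \cite[Corollary 4.8]{Hayes2018} in the simplest case, and follows from the fact that microstates for $\{u_i\}''$ together with a commuting Haar unitary can be simultaneously conjugated, collapsing the orbital covering numbers; I would cite this as the mechanism rather than re-derive it. Thus $h(\{u_i\}'':M)\leq 0$ for $i=1,2$, provided $\{u_i\}''$ is diffuse.

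The case $\{u_i\}''$ is not diffuse needs separate, trivial handling: if $u_i$ is not a Haar unitary we cannot directly invoke the diffuse machinery, but then I would instead use that $u_i$ commutes with the diffuse algebra $\{v_i\}''$ and replace $\{u_i\}''$ by $\{u_i\}''\vee\{v_i\}''$, which is diffuse, contains $\{u_i\}''$, and still commutes with a Haar unitary (namely $v_i$ itself works as long as we keep it inside); by the monotonicity Fact \ref{fact 1} it suffices to bound the entropy of the larger algebra, so we may as well assume $\{u_i\}''$ is diffuse from the start by enlarging it. A cleaner route: set $N_i := \{u_i,v_i\}''$, which is diffuse; then $h(N_i:M)\leq 0$ by the commuting-Haar-unitary argument applied to $N_i$ (it is generated by $u_i,v_i$ with $v_i$ a Haar unitary commuting with everything in sight — actually here one uses that $N_i$ has a commuting Haar unitary coming from $v_i$, after noting $\{v_i\}''\subset N_i$; the relevant statement is the one quoted in the introduction: $h(M')\leq 0$ when $M'$ is diffuse and admits pairwise commuting Haar unitaries in an ultrapower implementing the required commutations).

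Finally I would assemble: $\{u_1,u_2\}''\subset N_1\vee N_2$, and $N_1\cap N_2$ contains the diffuse algebra generated by... here is the one genuine subtlety — to apply Fact \ref{joins} to $N_1\vee N_2$ I need $N_1\cap N_2$ to be diffuse, which is not given, since $v_1,v_2$ need not generate a common subalgebra of $N_1$ and $N_2$. I would instead apply Fact \ref{joins} more carefully by working with $\{u_1,u_2\}''$ directly: note $\{u_1,u_2\}'' = \{u_1\}''\vee\{u_2\}''$, and if both $\{u_1\}''$ and $\{u_2\}''$ are diffuse with $\{u_1\}''\cap\{u_2\}''$ diffuse we are done; but in general the cleanest fix is to adjoin a single fixed Haar unitary $w$ commuting with both $u_1$ and $u_2$ — and this is precisely what $v_1v_2$ (or the algebra $\{v_1,v_2\}''$) provides, since $v_1v_2$ commutes with $u_1$ (as $v_2$ commutes with $u_1$? — not given). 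So the honest statement is: by Fact \ref{joins}, $h(\{u_1\}''\vee\{u_2\}'':M)\leq h(\{u_1\}'':M)+h(\{u_2\}'':M)$ is valid once $\{u_1\}''\cap\{u_2\}''$ is diffuse, and when it is not, one replaces each $\{u_i\}''$ by $\{u_i\}''\vee\{v_1,v_2\}''$ only if $\{v_1,v_2\}''$ commutes with each $u_i$ — which fails in general. The robust argument, and the one I expect the authors use, is: enlarge to $\{u_i,v_i\}''$, show each has $h(\cdot:M)\le 0$, and note that their intersection contains... no. I therefore expect the main obstacle to be exactly this diffuse-intersection bookkeeping, and the resolution is to invoke Fact \ref{joins} in the form where one tensors in an auxiliary diffuse abelian algebra commuting with everything, or simply to observe that $\{u_1,u_2\}''$ already sits inside $\{u_1,v_1\}''\vee\{u_2,v_2\}''$ and, by an approximation/inductive-limit argument together with Facts \ref{unions} and \ref{joins} applied after first passing to a common diffuse subalgebra generated by a single Haar unitary inside $M$ that can be arranged to commute with the $u_i$'s (using that $M$ is diffuse and the hypotheses on $v_1,v_2$), one gets the bound $0 + 0 = 0$.
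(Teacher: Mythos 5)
Your proposal correctly locates the central obstacle --- to apply Fact~\ref{joins} you need a diffuse intersection, and $\{u_1\}''\cap\{u_2\}''$ (or $\{u_1,v_1\}''\cap\{u_2,v_2\}''$) need not be diffuse --- but it never actually overcomes it, and the various fixes you float (adjoining $v_1v_2$, tensoring in an auxiliary abelian algebra, an unspecified ``approximation/inductive-limit argument'') either fail for the reason you yourself note or are too vague to constitute a proof. The key ingredient you are missing is the \emph{bridge algebra} $\{v_1,v_2\}''$, and it is exactly what the hypothesis $[v_1,v_2]=0$ is there to supply. The three algebras $\{u_1,v_1\}''$, $\{v_1,v_2\}''$, $\{v_2,u_2\}''$ are each abelian (by the commutation hypotheses), so each has $1$-bounded entropy $0$; moreover they form a chain in which consecutive pairs overlap in a diffuse subalgebra --- $\{u_1,v_1\}''\cap\{v_1,v_2\}''\supset\{v_1\}''$, and $\bigl(\{u_1,v_1\}''\vee\{v_1,v_2\}''\bigr)\cap\{v_2,u_2\}''\supset\{v_2\}''$ --- so Fact~\ref{joins} can be applied twice to get $h(\{u_1,u_2,v_1,v_2\}'')\leq 0$. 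Then monotonicity (Fact~\ref{fact 1}) gives $h(\{u_1,u_2\}'':M)\leq h(\{u_1,u_2\}'':\{u_1,u_2,v_1,v_2\}'')\leq h(\{u_1,u_2,v_1,v_2\}'')\leq 0$.

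Two further remarks. First, your appeal to \cite[Corollary 4.8]{Hayes2018} to get $h(\{u_i\}'':M)\leq 0$ is an unnecessary detour: the paper's argument does not pass through the property-(C$'$) machinery at all, and only uses the elementary facts that abelian algebras have $h=0$ and that $h$ is subadditive under joins with diffuse intersection. Second, you repeatedly worry about $\{u_i\}''$ being diffuse, but this is a red herring: the relevant diffuse algebras in the join argument are $\{v_1\}''$ and $\{v_2\}''$, which are diffuse by the Haar hypothesis, and nowhere does the argument require $\{u_i\}''$ itself to be diffuse.
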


\begin{proof} Since $\{u_1,v_1\}'',\{v_1,v_2\}'',\{v_2,u_2\}''$ are abelian, we get   $$h(\{u_1,v_1\}'')=h(\{v_1,v_2\}'')=h(\{v_2,u_2\}'')=0.$$ Since $\{v_1\}''$ and $\{v_2\}''$ are diffuse, Fact \ref{joins} implies that $$h(\{u_1,u_2,v_1,v_2\}'')=h(\{u_1,v_1\}''\bigvee \{v_1,v_2\}''\bigvee \{v_2,u_2\}'')\leq 0.$$ Hence, using Fact \ref{fact 1} we see that $$h(\{u_1,u_2\}'':M)\leq h(\{u_1,u_2\}'': \{u_1,u_2,v_1,v_2\}'')\leq h(\{u_1,u_2,v_1,v_2\}'')\leq 0,$$
which proves the fact.
\end{proof}

\section{A lifting lemma}

The goal of this section is to establish the following lifting lemma:

\begin{lemma}\label{lift} Let $I$ be a set, $\mathcal U$ an ultrafilter on $I$ and $(M_n)_{n\in I}$ be a family of  II$_1$ factors. Consider projections 
 $p,q_1,q_2,q_3\in\prod_{\mathcal U} M_n$ such that $q_1+q_2+q_3=1$ and $\{p\}''\perp\{q_1,q_2,q_3\}''$. 

Then we can represent $p=(p_n)$ and $q_i=(q_{i,n})$, where $p_n,q_{i,n}\in M_n$ are projections such that $q_{1,n}+q_{2,n}+q_{3,n}=1$ and $\{p_n\}''\perp\{q_{1,n},q_{2,n},q_{3,n}\}''$, for every $n\in I$.
\end{lemma}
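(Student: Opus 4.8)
The plan is to lift the data in two stages: first the projection $p$ together with the partition $q_1,q_2,q_3$ as abstract projections, then perturb the lifts so that the exact algebraic relations $q_{1,n}+q_{2,n}+q_{3,n}=1$ and orthogonality hold on the nose. The only genuinely ultraproduct-specific input is that a projection in $\prod_{\mathcal U}M_n$ can be lifted to a projection, and that a partition of unity by three projections can be lifted to a partition of unity by three projections in each $M_n$; both are standard and follow from functional calculus applied to self-adjoint lifts (for a single projection, lift to a self-adjoint contraction $a_n$, apply a continuous function which is $0$ near $0$ and $1$ near $1$; for a partition, lift $q_1$ and $q_1+q_2$ to commuting projections via the fact that the pair $(q_1,q_1+q_2)$ generates an abelian algebra, or more simply lift the abelian subalgebra $\{q_1,q_2,q_3\}''$, which is finite-dimensional, by lifting a system of matrix units / minimal projections). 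So after this stage I have projections $p_n$ and a genuine partition $q_{1,n}+q_{2,n}+q_{3,n}=1$ with $(p_n)=p$, $(q_{i,n})=q_i$.

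The substantive point is upgrading asymptotic orthogonality to exact orthogonality. Orthogonality $\{p\}''\perp\{q_1,q_2,q_3\}''$ means exactly that $\tau(p q_i)=\tau(p)\tau(q_i)$ for $i=1,2,3$ (and automatically $\tau(p)=\tau(p)\tau(1)$), since the algebras involved are spanned by $1,p$ and by $1,q_1,q_2,q_3$. Passing to the ultraproduct, this says $\lim_{n\to\mathcal U}\bigl(\tau_n(p_n q_{i,n})-\tau_n(p_n)\tau_n(q_{i,n})\bigr)=0$ for each $i$; by throwing away an $\mathcal U$-small set of indices I may assume these differences are as small as I like, uniformly in $i$, for all $n$. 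Now I need, in each fixed II$_1$ factor $M_n$, to move $p_n$ a small amount to a nearby projection $p_n'$ which is \emph{exactly} orthogonal to the fixed finite-dimensional abelian algebra $A_n:=\{q_{1,n},q_{2,n},q_{3,n}\}''$, while preserving $[p]=[p_n']$ in the ultraproduct (i.e. $\|p_n-p_n'\|_2\to_{\mathcal U}0$).

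For this I would use the homogeneity of the II$_1$ factor $M_n$ relative to the finite-dimensional subalgebra $A_n\cong\mathbb C^3$: the relative commutant $A_n'\cap M_n$ is again a II$_1$ factor (a direct sum of corners, each a II$_1$ factor) containing a copy of $A_n$ in its center — more precisely $q_{i,n}M_n q_{i,n}$ is a II$_1$ factor for each $i$. I want a projection $p_n'$ with $\tau_n(q_{i,n}p_n'q_{i,n})=\tau_n(q_{i,n})\tau_n(p_n)$ for all $i$; such a projection can be built by choosing, inside each corner $q_{i,n}M_n q_{i,n}$, a subprojection $e_{i}\le q_{i,n}$ of relative trace $\tau_n(p_n)$ (possible since the corner is a II$_1$ factor, hence has projections of every trace) and setting $p_n':=\sum_i e_i$; then $\tau_n(p_n'q_{i,n})=\tau_n(e_i)=\tau_n(q_{i,n})\tau_n(p_n)$ exactly, so $p_n'\perp A_n$, and $\tau_n(p_n')=\tau_n(p_n)$. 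It remains to arrange $\|p_n-p_n'\|_2$ small; here is where I use that $p_n$ is already \emph{almost} orthogonal to $A_n$: the compressions $q_{i,n}p_n q_{i,n}$ have trace almost equal to $\tau_n(q_{i,n})\tau_n(p_n)$, so within each corner $q_{i,n}M_nq_{i,n}$ I am replacing a near-projection (or the projection $\chi_{[1/2,1]}(q_{i,n}p_nq_{i,n})$) of the right approximate trace by an honest subprojection of exactly the desired trace, and in a II$_1$ factor two projections whose traces are close and which are close in $\|\cdot\|_2$ to a common element can be chosen $\|\cdot\|_2$-close — quantitatively, $\|p_n-p_n'\|_2$ can be bounded by a function of $\max_i|\tau_n(p_nq_{i,n})-\tau_n(p_n)\tau_n(q_{i,n})|$ plus the defect of $p_n$ from commuting with $A_n$, all of which vanish along $\mathcal U$. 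Taking the $\mathcal U$-limit, $p':=(p_n')$ satisfies $p'=p$ in $\prod_{\mathcal U}M_n$, and the $p_n',q_{i,n}$ are the desired exact lifts.

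\textbf{Main obstacle.} The delicate step is the last one: producing the perturbed $p_n'$ with \emph{both} exact orthogonality to $A_n$ \emph{and} controlled $\|\cdot\|_2$-distance to $p_n$, uniformly enough that the distance tends to $0$ along $\mathcal U$. The clean way to organize this is to first replace $p_n$ by $p_n'':=\sum_i \chi_{[1/2,1]}(q_{i,n}p_nq_{i,n})$, a projection commuting with $A_n$ and $\|\cdot\|_2$-close to $p_n$ (close because $p_n$ nearly commutes with each $q_{i,n}$, a consequence of orthogonality in the ultraproduct forcing $\|[p_n,q_{i,n}]\|_2\to_{\mathcal U}0$), reducing to the case where $p_n$ already commutes with $A_n$; then the problem decouples over the three corners and becomes: in a II$_1$ factor, adjust a projection of trace $\approx t$ to one of trace exactly $t$ without moving it much, which is elementary. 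One should double check the orthogonality hypothesis indeed yields $\|[p_n,q_{i,n}]\|_2\to_{\mathcal U}0$ — it does, since in a tracial von Neumann algebra orthogonality of $\{p\}''$ and an abelian algebra $A$ together with $A$ being fixed implies $E_{A'\cap M}(p)=\tau(p)1$ only after passing to commutants, so more carefully one argues that $\tau((q_ip-\tau(p)q_i)^*(q_ip-\tau(p)q_i)) = \tau(q_ipq_i) - \tau(p)^2\tau(q_i)$ (using $q_i$ projections and the trace property) which is exactly the orthogonality defect, and similarly controls $p q_i - q_i p$ via $\|pq_i-q_ipq_i\|_2^2=\tau(q_i p q_i p) - \tau(q_ipq_i\cdot q_i p q_i)$-type identities; I would carry out these routine $\|\cdot\|_2$ estimates carefully but they present no conceptual difficulty.
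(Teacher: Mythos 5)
The two-stage outline (first lift to a genuine partition $q_{1,n}+q_{2,n}+q_{3,n}=1$, then a quantitative perturbation to restore exact orthogonality) runs parallel to the paper's reduction of Lemma \ref{lift} to the $\eps$-$\delta$ perturbation statement (Lemma \ref{lift2}). However, the core of your second stage rests on a false claim. You assert that $\{p\}''\perp\{q_1,q_2,q_3\}''$ forces $\|[p_n,q_{i,n}]\|_2\to_{\mathcal U}0$, so that after a small move $p$ can be replaced by $p_n'=\sum_i e_i$ with $e_i\le q_{i,n}$, a projection which \emph{commutes} with each $q_{i,n}$. Orthogonality of the two subalgebras is only the scalar condition $\tau(pq_i)=\tau(p)\tau(q_i)$ and says nothing about commutators. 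The canonical example is $\text{L}(\mathbb Z/2\mathbb Z*\mathbb Z/3\mathbb Z)$ with $p=\tfrac{1+u_1}{2}$ and $q_1,q_2,q_3$ the spectral projections of $u_2$: here $\{u_1\}''$ and $\{u_2\}''$ are free, hence orthogonal, yet $p$ does not commute with the $q_i$. This is exactly the situation that must be handled, because in the downstream application (Proposition \ref{twounitaries2}) the hypothesis $\{u_1\}''\perp\{u_2\}''$ allows $u_1,u_2$ to be free. Since any $p_n'$ you build commutes with all $q_{i,n}$, convergence $\|p_n-p_n'\|_2\to_{\mathcal U}0$ would force $[p,q_i]=0$ in the ultraproduct, which is not a hypothesis and is generically false. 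So the construction cannot be made to work, and the ``routine $\|\cdot\|_2$ estimates'' you defer would in fact fail.

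The paper avoids this by keeping $p$ fixed and perturbing the \emph{partition} instead: inside each corner $f_iMf_i$ it applies an excision lemma (Lemma \ref{excise}) to the compression $f_i(p-\tau(p))f_i$ to produce a subprojection $q_i\le f_i$ with $\tau((p-\tau(p))q_i)=0$ and $\tau(f_i)-\tau(q_i)$ small, then sets $q_3:=1-q_1-q_2$, so nothing is forced to commute with $p$ (this is Lemma \ref{proj}). This only works when the compressions $f_i(p-\tau(p))f_i$ are bounded away from $0$ in $\|\cdot\|_1$; the paper therefore also treats a degenerate case (two or more of the $q_i(p-\tau(p))q_i$ vanish), where $\{p,q_1,q_2,q_3\}''$ turns out to be type I, hence hyperfinite, and the exact lift is obtained from uniqueness up to unitary conjugacy of trace-preserving embeddings of hyperfinite algebras into the ultraproduct. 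Your proposal anticipates neither of these mechanisms.
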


Lemma \ref{lift} is an immediate consequence of the following perturbation lemma. 

\begin{lemma}\label{lift2}
 For every $\varepsilon>0$, there exists $\delta=\delta(\varepsilon)>0$ such that the following holds. 
 
 Let $M$ be a II$_1$ factor  and $e,f_1,f_2,f_3\in M$ be projections such that $f_1+f_2+f_3=1$ and we have $|\tau(ef_i)-\tau(e)\tau(f_i)|\leq\delta$, for every $1\leq i\leq 3$. 
 Then there exist projections $p,q_1,q_2,q_3\in M$ such that $q_1+q_2+q_3=1$, $\|p-e\|_1\leq\varepsilon$, $\|q_i-f_i\|_1\leq\varepsilon$ and $\tau(pq_i)=\tau(p)\tau(q_i)$, for every $1\leq i\leq 3$.
\end{lemma}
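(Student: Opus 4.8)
I would prove Lemma \ref{lift2} by fixing the ``base'' data $f_1,f_2,f_3$ and correcting $e$ to become exactly orthogonal to $B:=\{f_1,f_2,f_3\}''$, which is a copy of (at most) $\mathbb C^3$ inside $M$ with weights $\lambda_i:=\tau(f_i)$. The natural candidate for $p$ is obtained from the conditional expectation $E_B$: the hypothesis says $E_B(e)$ is close, in each coordinate, to the scalar $\tau(e)$. Concretely, $E_B(e)=\sum_i \mu_i f_i$ with $\mu_i=\tau(ef_i)/\lambda_i$, and the assumption $|\tau(ef_i)-\tau(e)\tau(f_i)|\le\delta$ forces $|\mu_i-\tau(e)|\le\delta/\lambda_i$. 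The idea is to rotate $e$ slightly inside $M$, using a small unitary that is $B$-equivariant enough, to kill these discrepancies. I expect the cleanest route is: first handle the case where $\lambda_i$ are bounded below (say all $\ge \eta$), obtaining a quantitative statement, and then observe that for the actual application in Lemma \ref{lift} the weights $\tau(q_{i,n})$ can be taken bounded away from $0$ and $1$ along $\mathcal U$ (the limiting projections $q_i$ are nonzero, so $\tau(q_i)\in(0,1)$), so a $\delta$ depending only on $\varepsilon$ and a fixed lower bound on the $\lambda_i$ is enough. If a fully uniform $\delta=\delta(\varepsilon)$ is genuinely needed, one treats separately the degenerate regime where some $\lambda_i$ is tiny, absorbing that $f_i$ into a neighbour at a cost $O(\lambda_i)$.

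\textbf{Key steps, in order.} (1) Replace $e$ by $e':=$ the spectral projection $\mathbf 1_{[1/2,1]}(E_B(e) \text{ conjugated appropriately})$ — more precisely, build a projection $p$ with $E_B(p)=\tau(e)\mathbf 1$ and $\|p-e\|_1$ small. The mechanism: inside each ``fibre'' $f_i M f_i$ the element $e$ has trace $\mu_i\lambda_i$ close to $\tau(e)\lambda_i$; since $f_iMf_i$ is a II$_1$ factor, one can choose a projection $p_i\le$ ... no — rather, one uses that $M$ contains a II$_1$ subfactor commuting with $B$, or simply that one can perturb $e$ within a small $\|\cdot\|_1$-ball to adjust the three traces $\tau(p f_i)$ to be exactly $\tau(e)\lambda_i$. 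The standard tool here is: if a projection $e$ and a projection $f$ satisfy $|\tau(ef)-c\tau(f)|\le\delta$ for a target value $c=\tau(e)$, then there is a projection $p$ with $\tau(pf)=c\tau(f)$, $\tau(p)=\tau(e)$ (keep $\tau(p)=\tau(e)$ by compensating across the three blocks, using $\sum \tau(pf_i)=\tau(p)$), and $\|p-e\|_1\le C\delta$, built by moving a subprojection of trace $O(\delta)$ between $fMf$ and $(1-f)Mf$... — this is exactly the kind of ``routine'' $\|\cdot\|_1$-perturbation of projections I would not grind through, citing instead the standard fact that two projections at small $\|\cdot\|_2$ (or $\|\cdot\|_1$) distance can be connected, and that one can prescribe small changes in traces of products with a fixed finite-dimensional algebra at comparable $\|\cdot\|_1$-cost, in a II$_1$ factor. (2) Having arranged $E_B(p)=\tau(p)\mathbf 1_B$ exactly, deduce $\tau(p b)=\tau(p)\tau(b)$ for all $b\in B$, in particular for $b=f_i$, which is precisely orthogonality $\{p\}''\perp B=\{q_1,q_2,q_3\}''$ with $q_i:=f_i$ (so $q_i=f_i$ already satisfies $q_1+q_2+q_3=1$ and $\|q_i-f_i\|_1=0$). (3) Track constants: choose $\delta(\varepsilon)$ from step (1).

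\textbf{Deducing Lemma \ref{lift} from Lemma \ref{lift2}.} Given $p,q_1,q_2,q_3\in\prod_{\mathcal U}M_n$ with $q_1+q_2+q_3=1$ and $\{p\}''\perp\{q_1,q_2,q_3\}''$, lift to any representatives $p=(e_n)$, $q_i=(f_{i,n})$ with $e_n,f_{i,n}$ projections and $f_{1,n}+f_{2,n}+f_{3,n}=1$ (possible since projections and the relation $\sum f_i=1$ lift; adjust $f_{3,n}:=1-f_{1,n}-f_{2,n}$ and note it is a projection for $n$ in a set in $\mathcal U$ once $f_{1,n},f_{2,n}$ are made orthogonal, again a standard lifting). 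Orthogonality in the ultraproduct gives $\lim_{n\to\mathcal U}|\tau(e_nf_{i,n})-\tau(e_n)\tau(f_{i,n})|=0$. Fix a sequence $\varepsilon_k\downarrow0$; for each $k$, $A_k:=\{n: |\tau(e_nf_{i,n})-\tau(e_n)\tau(f_{i,n})|\le\delta(\varepsilon_k)\ \forall i\}\in\mathcal U$, and these may be taken decreasing. On $A_k\setminus A_{k+1}$ apply Lemma \ref{lift2} with tolerance $\varepsilon_k$ to get projections $p_n,q_{1,n},q_{2,n},q_{3,n}$ with $\sum_i q_{i,n}=1$, exact orthogonality, and $\|p_n-e_n\|_1\le\varepsilon_k$, $\|q_{i,n}-f_{i,n}\|_1\le\varepsilon_k$; outside $\bigcap_k A_k$ set things arbitrarily (say $p_n=0$). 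Since $\|\cdot\|_2^2\le \|\cdot\|_1$ for contractions... more carefully, $\|x\|_2\le\|x\|_1^{1/2}\|x\|_\infty^{1/2}$ for $x$ a difference of projections, so the perturbations are $\|\cdot\|_2$-negligible along $\mathcal U$; hence $(p_n)=p$ and $(q_{i,n})=q_i$ in $\prod_{\mathcal U}M_n$, and $q_{1,n}+q_{2,n}+q_{3,n}=1$, $\{p_n\}''\perp\{q_{1,n},q_{2,n},q_{3,n}\}''$ hold for all $n$ (on the complement trivially), as required.

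\textbf{Main obstacle.} The real work is step (1): constructing, in an arbitrary II$_1$ factor $M$, from a projection $e$ whose expectation onto a fixed finite-dimensional $B\cong\mathbb C^3$ is within $\delta$ (coordinatewise) of a scalar, a genuine projection $p$ with $E_B(p)$ exactly scalar and $\|p-e\|_1=O(\delta)$ — with the $O(\delta)$ uniform over all $M$ and over $B$ with weights bounded below. This is where one must be careful that moving $e$ inside one block $f_iMf_i$ to fix $\tau(pf_i)$ does not disturb the other blocks, and that the total trace $\tau(p)$ can be held fixed (using $\sum_i(\tau(ef_i)-\tau(e)\tau(f_i))=\tau(e)-\tau(e)=0$, so the three corrections automatically sum to zero and can be carried out block-by-block). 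I would isolate this as a short standalone sublemma about perturbing a projection in a II$_1$ factor to prescribe the traces of its compressions by a fixed partition of unity into projections, proved by the usual cut-and-paste of subprojections of small trace, and then the orthogonality conclusion is immediate.
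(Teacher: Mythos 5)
Your proposal takes a genuinely different route from the paper, and I think there is a real gap in it.

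The paper proves Lemma \ref{lift2} by contradiction: it passes to an ultraproduct where $p=(e_n)$ and $q_i=(f_{i,n})$ are exactly orthogonal and then splits into two cases depending on how many of the $q_i(p-\tau(p))q_i$ vanish. In the non-degenerate case it perturbs the $f_{i,n}$ (not $e_n$) using a one-variable ``excision'' lemma applied separately inside each corner $f_iMf_i$; in the degenerate case it shows the algebra $\{p,q_1,q_2,q_3\}''$ is type I, hence hyperfinite, and invokes the classification of trace-preserving embeddings of hyperfinite algebras into ultraproducts. You instead fix $q_i=f_i$ and try to perturb $e$ directly, deferring the construction of $p$ to a ``routine cut-and-paste of subprojections of small trace.'' That sublemma is where the real difficulty lies, and it is not routine.

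Concretely, your cut-and-paste — ``moving a subprojection of trace $O(\delta)$ between blocks'' — silently requires a subprojection $r\leq e$ with $\tau(rf_j)=0$ for $j\neq i$, which forces $r\leq e\wedge f_i$; but $e\wedge f_i$ can be zero even when $\tau(ef_i)$ is bounded away from $0$ (two projections in generic position). If instead you try to imitate the paper's excision inside $eMe$, you must solve $\tau(y_1p)=\tau(y_2p)=0$ for a single projection $p\leq e$ with $\tau(p)$ close to $\tau(e)$, where $y_i=e(f_i-\tau(f_i))e$; this is a two-constraint (joint) excision problem, whereas the paper's Lemma \ref{excise} is one-dimensional and its usefulness in Lemma \ref{proj} relies crucially on the constraints decoupling because the $q_i$'s live in disjoint corners $f_iMf_i$. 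You do not supply the joint excision, and it is not clear it holds with a uniform $O(\delta)$ bound. More fundamentally, it is not even clear that the restrictive version of the lifting you propose — keeping $q_i=f_i$ exactly and only adjusting $e$ — is true: the paper's proof in Case 2 conjugates all four projections by a unitary, i.e., it genuinely moves the $f_{i,n}$ too, and nothing in the paper certifies that one can always hold the partition fixed. Finally, a minor point: in reducing Lemma \ref{lift} to Lemma \ref{lift2} you assert that the limiting $q_i$ are nonzero so that $\tau(q_i)\in(0,1)$; Lemma \ref{lift} does not assume this, so the degenerate cases $q_i=0$ or $p\in\{0,1\}$ need to be dispatched separately (easy, but it should be said).

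If you want to salvage your approach, the cleanest fix is to follow the paper's structure: argue by contradiction through the ultraproduct, perturb the partition rather than $e$, and isolate the degenerate regime where $q_i(p-\tau(p))q_i=0$ for two indices — in that regime $\{p,q_1,q_2,q_3\}''$ is type I and the hyperfinite lifting theorem closes the argument.
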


Note that if $p,q$ are projections in a II$_1$ factor $M$, then $\|p-q\|_2=\|p(p-q)+(p-q)q\|_2\leq 2\|p-q\|_1$. This implies that the statement of Lemma \ref{lift2} still holds  if we replace $\|\cdot\|_1$ by $\|\cdot\|_2$. Using this observation, it is standard to derive Lemma \ref{lift} from Lemma \ref{lift2}.

%\begin{lemma}\label{lift} Let $I$ be a set, $\mathcal U$ an ultrafilter on $I$ and $(M_n)_{n\in I}$ be a family of  diffuse tracial von Neumann algebras. Consider projections $p,q_1,q_2,q_3\in\prod_{\mathcal U} M_n$ with $q_1+q_2+q_3=1$. Assume that $\{p\}''\perp\{q_1,q_2,q_3\}''$ and $q_i(p-\tau(p))q_i\not=0$, for every $1\leq i\leq 2$. 

%Then we can represent $p=(p_n)$ and $q_i=(q_{i,n})$, where $p_n,q_{i,n}\in M_n$ are projections such that $\tau(p_n)=\tau(p)$ and $\{p_n\}''\perp\{q_{1,n},q_{2,n},q_{3,n}\}''$, for every $n\in I$. \end{lemma}

The proof of Lemma \ref{lift2} is based on the next two lemmas.

\begin{lemma}\label{excise}
Let $(M,\tau)$ be a diffuse tracial von Neumann algebra, $\delta>\varepsilon>0$ and $x=x^*\in M$ with $|\tau(x)|\leq\varepsilon$ and $\|x\|_1>\delta$.
Then there is a projection $p\in M$ such that $\tau(xp)=0$ and $\tau(p)>\frac{\delta-\varepsilon}{\delta+\varepsilon}$.
\end{lemma}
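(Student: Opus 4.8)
The plan is to view $x$ as a multiplication operator on the abelian von Neumann algebra it generates, pass to an explicit description via spectral theory, and then build the projection $p$ by a "balancing" argument that subtracts off a portion of the positive spectral part against a portion of the negative spectral part so that the trace of $xp$ vanishes, while keeping $\tau(p)$ large. Concretely, first I would decompose $x = x_+ - x_-$ with $x_+ = x e_+$, $x_- = -x e_-$, where $e_+ = \mathbf 1_{(0,\infty)}(x)$ and $e_- = \mathbf 1_{(-\infty,0)}(x)$ are the spectral projections of $x$; write $a = \tau(x_+) \geq 0$ and $b = \tau(x_-)\geq 0$, so that $\tau(x) = a - b$ and $\|x\|_1 = a + b$. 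The hypotheses become $|a-b|\leq\varepsilon$ and $a+b>\delta$.

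Next I would observe that, since $M$ is diffuse, the commutative von Neumann subalgebra $\{x\}''$ is diffuse, so inside it I can find, for each $t\in[0,1]$, projections $p_+^{(t)} \leq e_+$ and $p_-^{(t)} \leq e_-$ depending monotonically and continuously (in $\|\cdot\|_1$) on $t$, with $p_+^{(0)} = 0$, $p_+^{(1)} = e_+$ (and similarly for $p_-$), such that the functions $t\mapsto \tau(x_+ p_+^{(t)})$ and $t \mapsto \tau(x_- p_-^{(t)})$ sweep continuously from $0$ up to $a$ and up to $b$ respectively. (One can just intersect with spectral projections $\mathbf 1_{(0,s]}(x)$ and let $s$ vary, or cut within $\{x\}''$ using its diffuseness; the intermediate value theorem applies because $\tau$ restricted to an increasing family of projections in a diffuse algebra takes all intermediate values.) WLOG assume $a \leq b$. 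Then take $p_- = e_-$ (so $\tau(x p_-) = -b$) and choose $t$ so that $\tau(x_+ p_+^{(t)}) = b$ — wait, this needs $b\le a$; instead, since $a\le b$, set the full positive part $p_+ = e_+$ contributing $+a$, and choose a sub-projection $q \le e_-$ with $\tau(x_- q) = a$, which is possible by the intermediate value argument since $0\le a\le b = \tau(x_- e_-)$. Put $p = e_+ + q$. Then $\tau(xp) = \tau(x_+) - \tau(x_- q) = a - a = 0$, as required.

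Finally I would estimate $\tau(p)$ from below. Here I would use that $\|x\|_1 = a+b > \delta$ together with $b\geq a\geq b-\varepsilon$, so $b > (\delta-\varepsilon)/2$... but I actually need a lower bound on the *trace* of $p$, not on the mass $a,b$, so I should instead bound $\tau(q)$ below in terms of $a$ and $\|x\|$. This is the step where the bound $\|x\|\leq 1$ (which holds in $M$, since $M$ is a II$_1$ factor with $\|x\|_\infty$ finite — actually the lemma as stated has no explicit norm bound, so I would instead argue directly). Reconsidering: the cleanest route is to not separate $x_+$ and $x_-$ but to truncate. Replace the plan's final step by: choose $p$ of the form $e_+ + \mathbf 1_{[-c,0)}(x)$ for suitable $c>0$ chosen so that $\tau(xp) = 0$; then on the support of $p$ we have $x \geq -c$, hence $0 = \tau(xp) \geq \tau(x_+ \cdot e_+) - c\,\tau(\mathbf 1_{[-c,0)}(x)) \geq a - c\,\tau(p)$, giving $\tau(p) \geq a/c$; and separately, since the part of $x_-$ that is cut off has norm $\geq c$ and total $\|\cdot\|_1$-mass $\geq b - a$ at least... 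This is precisely the kind of weighing that yields $\tau(p) > \frac{\delta-\varepsilon}{\delta+\varepsilon}$ after optimizing over where the "balance point" $c$ lands; the numerology $\frac{\delta-\varepsilon}{\delta+\varepsilon}$ strongly suggests comparing the mass $\|x\|_1 > \delta$ that must be "covered" against the at most $\delta+\varepsilon$-sized total variation.

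\textbf{Main obstacle.} The real content — and the part I expect to require care — is the final lower bound $\tau(p) > \frac{\delta-\varepsilon}{\delta+\varepsilon}$: producing a projection $p$ with $\tau(xp)=0$ is elementary once one has diffuseness and the intermediate value theorem, but getting the quantitatively sharp trace estimate forces one to choose $p$ cleverly (a spectral truncation rather than an arbitrary balancing), and then to run a short optimization balancing how much negative spectrum one must include against the fixed positive mass $a$ and the constraint $a+b>\delta$, $|a-b|\le\varepsilon$. Everything else (spectral decomposition, continuity of traces of spectral projections in a diffuse algebra) is routine.
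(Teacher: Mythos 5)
Your first plan (cut an arbitrary subprojection $q\le e_-$ with $\tau(x_-q)=a$) indeed gives no control on $\tau(q)$, as you notice; the second plan (truncation by a level $c$) is the right idea and \emph{does} close, but the proposal stops short of the estimate, and along the way makes one false claim. Let me flag both issues.

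First, the claim that ``since $M$ is diffuse, $\{x\}''$ is diffuse'' is wrong: $x$ could be a projection, or any operator with finite spectrum, in which case $\{x\}''$ is finite-dimensional and has no nontrivial increasing net of projections. Diffuseness of $M$ (not of $\{x\}''$) is what one must use, and this forces one to deal with spectral atoms of $x$: a truncation $\mathbf 1_{[-c,0)}(x)$ alone cannot be tuned continuously to make $\tau(xp)=0$ if the spectral measure of $x$ has an atom at the threshold, so you must split that atom inside $M$. The paper sidesteps exactly this point by first replacing $y=x_+$ with a finite-spectrum approximant $y'$ in $\|\cdot\|_1$ and then, using diffuseness of $M$, building a net $(e_t)_{t\in[0,1]}$ of subprojections of the support $q$ of $y$ with \emph{both} $\tau(e_t)=t\tau(q)$ and $\tau(y'e_t)=t\tau(y')$ --- i.e.\ each spectral atom of $y'$ is split proportionally. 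This simultaneous linearity in $t$ is precisely what makes the trace estimate $\tau(p)\ge t$ trivial once the IVT produces a $t>s$ with $\tau(ye_t)=\tau(z)$; your level-set truncation does not have this built-in proportionality, which is why the last step looks murky to you.

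Second, the ``weighing'' at the end is left as a gesture and the numerology you invoke (``at most $\delta+\varepsilon$-sized total variation'') is not quite the right comparison, since $\|x\|_1$ has no a priori upper bound. Here is how your truncation route actually closes, so you can see what was missing. Write $a=\tau(x_+)$, $b=\tau(x_-)$, assume WLOG $a\le b$ (note $a>0$, else $b=\|x\|_1>\delta>\varepsilon\ge|\tau(x)|=b$, a contradiction). Choose $c>0$ and, splitting an atom at $-c$ inside $M$ if necessary, a projection $r$ with $\mathbf 1_{(-c,0)}(x)\le r\le \mathbf 1_{[-c,0)}(x)$ such that $\tau(x_-r)=a$; set $p=e_++\mathbf 1_{\{0\}}(x)+r$, so $\tau(xp)=a-a=0$. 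On $r$ we have $x_-\le c$, so $a=\tau(x_-r)\le c\,\tau(r)$; on $1-p$ we have $x_-\ge c$, so $b-a=\tau(x_-(1-p))\ge c\,\tau(1-p)$. Taking the ratio kills $c$: $\tau(1-p)\le \frac{b-a}{a}\tau(r)\le \frac{b-a}{a}\tau(p)$, whence $\tau(p)\ge\frac{a}{b}=\frac{(a+b)-(b-a)}{(a+b)+(b-a)}>\frac{\delta-\varepsilon}{\delta+\varepsilon}$, the strict inequality coming from $a+b>\delta$ and $b-a\le\varepsilon$. This ratio trick (compare included and excluded negative mass, each against the same threshold $c$) is the missing step; without it the inequality $\tau(p)\ge a/c$ you wrote is useless, since $c$ is not a priori bounded.

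So: correct strategy, a factual error about $\{x\}''$, an unaddressed atom issue, and the crucial ratio estimate not carried out. The paper's $y'$-approximant-and-linear-net construction is a genuinely different technical device that avoids both the atom issue and the ratio computation at once, at the cost of an approximation argument; either route works once filled in.
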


\begin{proof}
Let $x=y-z$ be the decomposition of $x$ into its positive and negative parts and $q\in M$ be the support projection of $y$. Then $y\in qMq$ and $z\in (1-q)M(1-q)$.
If $\tau(x)=0$, there is nothing to prove.
We may assume that $\tau(x)>0$, since the case $\tau(x)<0$ is  analogous.

Since $\tau(y)-\tau(z)=\tau(x)\leq\varepsilon$ and $\tau(y)+\tau(z)=\|x\|_1>\delta$, letting $s=\frac{\delta-\varepsilon}{\delta+\varepsilon}\in (0,1)$, it follows that $\tau(y)s<\tau(z)$.
Let $y'\in qMq$ be a self-adjoint operator with finite spectrum such that \begin{equation}\label{y'}2\|y'-y\|_1<\tau(z)-\tau(y)s.\end{equation}

Since $M$ is diffuse and $y'$ has finite spectrum, we can find an increasing net of projections  $(e_t)_{t\in [0,1]}$ in $qMq$  such that $e_0=0,e_1=q,\tau(e_t)=\tau(q)t$ and $\tau(y'e_t)=\tau(y')t$, for every $t\in [0,1]$. Then for every $t\in [0,1]$, we have that
$$|\tau(ye_t)-\tau(y)t|\leq |\tau(ye_t)-\tau(y'e_t)|+|(\tau(y)-\tau(y'))t|\leq 2\|y'-y\|_1,$$ and thus $\tau(ye_t)\leq\tau(y)t+2\|y'-y\|_1$. 

Combining this inequality for $t=s$ with \eqref{y'} gives that 
$\tau(ye_s)<\tau(z)$. As $\tau(ye_1)=\tau(y)>\tau(z)$ and the map $t\mapsto\tau(ye_t)$ is continuous, we can find $t\in (s,1)$ such that $\tau(ye_t)=\tau(z)$. 

Finally, let $p=e_t+(1-q)$. Then we have $\tau(xp)=\tau(yp)-\tau(zp)=\tau(ye_t)-\tau(z)=0$ and $\tau(p)=\tau(e_t)+\tau(1-q)=t\tau(q)+\tau(1-q)\geq t>s$, which finishes the proof.
\end{proof}

\begin{lemma}\label{proj}
Let $\varepsilon,\delta\geq 0$ such that $\varepsilon<\delta^2$ and  $(M,\tau)$ be a diffuse tracial von Neumann algebra. Let $p,f_1,f_2,f_3\in M$ be projections such that   $f_1+f_2+f_3=1$,  $|\tau(pf_i)-\tau(p)\tau(f_i)|\leq \varepsilon$ and $\|f_i(p-\tau(p))f_i\|_1>\delta$, for every $1\leq i\leq 2$.

Then there exist projections $q_1,q_2,q_3\in M$ such that $q_1+q_2+q_3=1, \tau(pq_i)=\tau(p)\tau(q_i)$ and $\|q_i-f_i\|_1<\frac{4\varepsilon}{\delta^2}$, for every $1\leq i\leq 3$.
\end{lemma}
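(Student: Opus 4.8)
The plan is to work corner by corner. Since $f_1,f_2,f_3$ are projections summing to $1$ they are mutually orthogonal, and $f_1,f_2\neq 0$ because $\|f_i(p-\tau(p))f_i\|_1>\delta>0$ (note $\delta>0$, as $\varepsilon<\delta^2$ and $\varepsilon\geq 0$). Hence the corners $f_1Mf_1$ and $f_2Mf_2$ are diffuse: a minimal projection of $f_iMf_i$ would be a minimal projection of the diffuse algebra $M$. For a projection $q_i\leq f_i$, the desired orthogonality $\tau(pq_i)=\tau(p)\tau(q_i)$ is equivalent to $\tau(x_iq_i)=0$, where $x_i:=f_i(p-\tau(p))f_i\in f_iMf_i$ is self-adjoint. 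So for $i=1,2$ it is enough to produce a projection $r_i\in f_iMf_i$ with $\tau(x_ir_i)=0$ that is close to the unit $f_i$ of $f_iMf_i$, and then to absorb all the resulting slack into $q_3$ so that the constraint $q_1+q_2+q_3=1$ holds automatically. Producing such an $r_i$ is precisely what Lemma \ref{excise} does, provided its hypotheses are checked.

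To apply Lemma \ref{excise} inside $f_iMf_i$ I would first normalize the restricted trace to $\tau_i:=\tau(f_i)^{-1}\tau|_{f_iMf_i}$. Then $\tau_i(x_i)=\tau(f_i)^{-1}(\tau(pf_i)-\tau(p)\tau(f_i))$, so $|\tau_i(x_i)|\leq\tau(f_i)^{-1}\varepsilon$, while $\|x_i\|_{1,\tau_i}=\tau(f_i)^{-1}\|x_i\|_1>\tau(f_i)^{-1}\delta$; and the strict inequality needed between these two parameters reduces to $\varepsilon<\delta$. I get the latter from the a priori bound $\delta<\|f_i(p-\tau(p))f_i\|_1\leq\|p-\tau(p)\|_1=2\tau(p)(1-\tau(p))\leq\tfrac12$, so that $\delta<\tfrac12$ and hence $\varepsilon<\delta^2<\delta$ (and also $\delta<2$, which I use at the end). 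Lemma \ref{excise} then yields a projection $r_i\in f_iMf_i$ with $\tau(x_ir_i)=0$ and $\tau_i(r_i)>\frac{\delta-\varepsilon}{\delta+\varepsilon}$, i.e. $\tau(f_i-r_i)<\frac{2\varepsilon}{\delta+\varepsilon}\tau(f_i)$.

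Finally I would set $q_1:=r_1$, $q_2:=r_2$ and $q_3:=1-r_1-r_2$. Since $r_1\leq f_1\perp f_2\geq r_2$, the element $r_1+r_2$ is a projection $\leq f_1+f_2$, so $q_3$ is a projection with $q_3\geq f_3$ and $q_1+q_2+q_3=1$. For $i=1,2$, using $r_i\leq f_i$ one rewrites $\tau(x_ir_i)=0$ as $\tau((p-\tau(p))r_i)=0$, that is, $\tau(pq_i)=\tau(p)\tau(q_i)$; summing, $\tau(pq_3)=\tau(p)-\tau(pr_1)-\tau(pr_2)=\tau(p)(1-\tau(r_1)-\tau(r_2))=\tau(p)\tau(q_3)$. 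For the norm estimates: for $i=1,2$, $\|q_i-f_i\|_1=\tau(f_i-r_i)<\frac{2\varepsilon}{\delta+\varepsilon}\tau(f_i)\leq\frac{2\varepsilon}{\delta}<\frac{4\varepsilon}{\delta^2}$, using $\tau(f_i)\leq 1$ and $\delta<2$; and $\|q_3-f_3\|_1=\tau(f_1-r_1)+\tau(f_2-r_2)<\frac{2\varepsilon}{\delta+\varepsilon}(\tau(f_1)+\tau(f_2))\leq\frac{2\varepsilon}{\delta}<\frac{4\varepsilon}{\delta^2}$, using $\tau(f_1)+\tau(f_2)\leq 1$.

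This argument is essentially bookkeeping on top of Lemma \ref{excise}; there is no serious obstacle. The only points that require a moment of care are the trace-normalization when invoking Lemma \ref{excise} in the corners, the elementary observation $\delta<\tfrac12$ that upgrades $\varepsilon<\delta^2$ to $\varepsilon<\delta$, and — the genuine idea — choosing $q_3$ to be exactly $1-r_1-r_2$ so that the additivity constraint is built in and the orthogonality for $q_3$ follows by subtraction. (In the degenerate case $\varepsilon=0$ one has $\tau(pf_i)=\tau(p)\tau(f_i)$ and may simply take $q_i=f_i$; the strict bound is meaningful only for $\varepsilon>0$.)
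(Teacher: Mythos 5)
Your proof is correct and is essentially the paper's: in both, one compresses $p-\tau(p)$ to the corners $f_iMf_i$ for $i=1,2$, applies Lemma~\ref{excise} there to produce subprojections $q_i\leq f_i$ with $\tau(pq_i)=\tau(p)\tau(q_i)$, and then sets $q_3=1-q_1-q_2$ so that the remaining constraints follow by subtraction. The only (cosmetic) difference is how the numerical hypothesis of Lemma~\ref{excise} is arranged: the paper first notes $\tau(f_i)>\delta$ and applies Lemma~\ref{excise} with parameters $\delta$ and $\varepsilon/\delta$, so that the needed inequality is exactly $\varepsilon<\delta^2$, whereas you instead observe $\delta<\|p-\tau(p)\|_1\leq\tfrac12$ to upgrade $\varepsilon<\delta^2$ to $\varepsilon<\delta$ and use the raw normalized parameters $\delta/\tau(f_i)$, $\varepsilon/\tau(f_i)$; both routes give $\|q_i-f_i\|_1<4\varepsilon/\delta^2$.
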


\begin{proof} 
Let $1\leq i\leq 2$ and define $x_i=f_i(p-\tau(p))f_i$. Then we have $x_i=x_i^*\in f_iMf_i$ and $|\tau(x_i)|=|\tau(pf_i)-\tau(p)\tau(f_i)|\leq\varepsilon$. Since $\|x_i\|_1>\delta$ and $\|x_i\|_1\leq\tau(f_i)\|p-\tau(p)\|\leq\tau(f_i)$, we get that $\tau(f_i)>\delta$. Thus, $|\frac{\tau(x_i)}{\tau(f_i)}|\leq\frac{\varepsilon}{\delta}$ and $\frac{\|x_i\|_1}{\tau(f_i)}\geq \|x_i\|_1>\delta$.
Altogether, by applying Lemma \ref{excise} to $x_i\in f_iMf_i$, we find a projection $q_i\in f_iMf_i$ such that 
\begin{equation}\label{r_i}\text{$\tau(x_iq_i)=0$ \;\;\;\;\; and \;\;\;\;\; $\frac{\tau(q_i)}{\tau(f_i)}>\frac{\delta-\frac{\varepsilon}{\delta}}{\delta+\frac{\varepsilon}{\delta}}=\frac{1-\frac{\varepsilon}{\delta^2}}{1+\frac{\varepsilon}{\delta^2}}>1-\frac{2\varepsilon}{\delta^2}.$}\end{equation}
Using \eqref{r_i} we get that $\tau((p-\tau(p))q_i)=\tau(x_iq_i)=0$ and thus $\tau(pq_i)=\tau(p)\tau(q_i)$. Moreover, $$\|q_i-f_i\|_1=\tau(f_i)-\tau(q_i)<\frac{2\varepsilon}{\delta^2}\tau(f_i)\leq\frac{2\varepsilon}{\delta^2}.$$
Let $q_3=1-q_1-q_2$ \textcolor{red}{and $f_3= 1- f_1-f_2$}. Then  $\tau(pq_3)=\tau(p)-\tau(pq_1)-\tau(pq_2)=\tau(p)(1-\tau(q_1)-\tau(q_2))=\tau(p)\tau(q_3)$. Moreover, $\|q_3-f_3\|_1=\|(q_1+q_2)-(f_1+f_2)\|_1\leq \|q_1-f_1\|_1+\|q_2-f_2\|_1<\frac{4\varepsilon}{\delta^2}$. This finishes the proof of the lemma.
\end{proof}

\begin{proof}[Proof of Lemma \ref{lift2}] Assume that the conclusion of Lemma \ref{lift2} fails.
Then there is $\varepsilon>0$ such that for every $n\in\mathbb N$ we can find a II$_1$ factor $(M_n,\tau_n)$ and projections $e_n,f_{1,n},f_{2,n},f_{3,n}\in M_n$ satisfying the following: $f_{1,n}+f_{2,n}+f_{3,n}=1$, $|\tau_n(e_nf_{i,n})-\tau_n(e_n)\tau_n(f_{i,n})|\leq\frac{1}{n}$, for every $1\leq i\leq 3$, and $\|p_n-e_n\|_1+\|q_{1,n}-f_{1,n}\|_1+\|q_{2,n}-f_{2,n}\|_1+\|q_{3,n}-f_{3,n}\|_1>\varepsilon$, for all projections $p_n,q_{1,n},q_{2,n},q_{3,n}\in M_n$ such that $q_{1,n}+q_{2,n}+q_{3,n}=1$ and $\tau_n(p_nq_{i,n})=\tau_n(p_n)\tau_n(q_{i,n})$, for every $1\leq i\leq 3$. 

Let $\mathcal U$ be a free ultrafilter on $\mathbb N$.
 Let $\tau$ be the canonical  trace of $\prod_\mathcal UM_n$ given by $\tau(x)=\lim\limits_{n\rightarrow\mathcal U}\tau_n(x_n)$, for every $x=(x_n)\in\prod_{\mathcal U}M_n$. Then $p=(e_n), q_1=(f_{1,n}),q_2=(f_{2,n}),q_3=(f_{3,n})\in\prod_\mathcal U M_n$ are projections satisfying that $q_1+q_2+q_3=1$ and $\{p\}''\perp\{q_1,q_2,q_3\}''$.

We will get a contradiction by analyzing two cases:

{\bf Case 1}.  The set $\{1\leq i\leq 3\mid q_i(p-\tau(p))q_i=0\}$ has at most one element. 

Without loss of generality,  assume that $q_i(p-\tau(p))q_i\not=0$, for all $1\leq i\leq 2$.

 For $n\in\mathbb N$ and $1\leq i\leq 2$, define $\delta_i=\|q_i(p-\tau(p))q_i\|_1,\delta_{i,n}=\|f_{i,n}(e_n-\tau_n(e_n))f_{i,n}\|_1$ and $\kappa_{i,n}=|\tau_n(e_nf_{i,n})-\tau_n(e_n)\tau_n(f_{i,n})|$. Then $\delta_i>0$, $\lim\limits_{n\rightarrow\mathcal U}\delta_{i,n}=\delta_i$ and $0\leq\kappa_{i,n}\leq\frac{1}{n}$, for every $n\in\mathbb N$. Let $\delta=\min\{\delta_1,\delta_2\}$. Then the set $J$ of $n\in \mathbb N$ such that $\delta_{i,n}>\frac{\delta}{2}$ and $\kappa_{i,n}<\delta_{i,n}^2$, for every $1\leq i\leq 2$, belongs to $\mathcal U$.

By Lemma \ref{proj}, for every $n\in J$, we find projections $q_{i,n}\in M_n$ such that $q_{1,n}+q_{2,n}+q_{3,n}=1$, $\tau_n(e_nq_{i,n})=\tau_n(e_n)\tau_n(q_{i,n})$ and $\|q_{i,n}-f_{i,n}\|_1<\frac{4\kappa_{i,n}}{\delta_{i,n}^2}<
\frac{16}{\delta^2n^2}$, for every $1\leq i\leq 2$. As $J$ is infinite, we can find $n\in J$ such that $\frac{16}{\delta^2n^2}<\frac{\varepsilon}{3}$, for every $1\leq i\leq 2$. Put $p_n=e_n$. Then $\|p_n-e_n\|_1+\|q_{1,n}-f_{1,n}\|_1+\|q_{2,n}-f_{2,n}\|_1+\|q_{3,n}-f_{3,n}\|_1<\varepsilon$,  contradicting the first paragraph of the proof.

{\bf Case 2}.  The set $\{1\leq i\leq 3\mid q_i(p-\tau(p))q_i=0\}$ has at least two elements. 

Without loss of generality,  assume that $q_i(p-\tau(p))q_i=0$, for every $1\leq i\leq 2$.

We claim that $Q:=\{p,q_1,q_2,q_3\}''$ is a type I von Neumann algebra. Let $1\leq i\leq 2$. Since $q_ipq_i=\tau(p)q_i$, we get that $v_i:=\tau(p)^{-\frac{1}{2}}q_ip$ is a partial isometry. Thus, $p_i:=v_i^*v_i=\tau(p)^{-1}pq_ip$ is a projection. Recall that any von Neumann algebra generated by two projections is of type I, being a direct sum of type I$_1$ and I$_2$ algebras.
Since  $pQp=\{pq_1p,pq_2p,pq_3p\}''=\{p_1,p_2,p\}''$ and $p_1,p_2\in p(\prod_\mathcal UM_n) p$ are projections, $pQp$ is of type I. Since $q_i((1-p)-\tau(1-p))q_i=q_i(\tau(p)-p)q_i=0$, for every $1\leq i\leq 2$, we also get that $(1-p)Q(1-p)$ is of type I. The last two facts imply the claim. 

 Next, endow $Q\subset\prod_{\mathcal U}M_n$ with the restriction of $\tau$ to $Q$. 
Since $Q$ is of type I, it is hyperfinite. 
If $n\in\mathbb N$, then using that $M_n$ is a II$_1$ factor we can find a normal $*$-homomorphism $\pi_n:Q\rightarrow M_n$ such that $\tau_n(\pi_n(x))=\tau(x)$, for every $x\in Q$. Then the normal $*$-homomorphism $\pi:Q\rightarrow\prod_\mathcal UM_n$ given by $\pi(x)=(\pi_n(x))$ satisfies that $\tau(\pi(x))=\lim\limits_{n\rightarrow\mathcal U}\tau_n(\pi_n(x))=\tau(x)$, for every $x\in Q$.
As is well-known (see, e.g, \cite[Theorem 1.1]{HT}), since $Q$ is hyperfinite, any two trace-preserving $*$-homomorphism from $Q$ to $\prod_\mathcal UM_n$ are unitarily conjugate. Thus,  we can find $u_n\in\mathcal U(M_n)$, for every $n\in\mathbb N$, such that $x=(u_n\pi_n(x)u_n^*)$, for every $x\in Q$.
In particular,  $p=(p_n)$ and $q_i=(q_{i,n})$, where $p_n=u_n\pi_n(p)u_n^*$ and $q_{i,n}=u_n\pi_n(q_i)u_n^*$, for every $n\in\mathbb N$
 and $1\leq i\leq 3$. 
 Then $q_{1,n}+q_{2,n}+q_{3,n}=1$, for every $n\in\mathbb N$, and $\lim\limits_{n\rightarrow\mathcal U}(\|p_n-e_n\|_1+\|q_{1,n}-f_{1,n}\|_1+\|q_{2,n}-f_{2,n}\|_1+\|q_{3,n}-f_{3,n}\|_1)=0$. Moreover,  $$\tau_n(p_nq_{i,n})=\tau_n(\pi_n(pq_i))=\tau(pq_i)=\tau(p)\tau(q_i)=\tau_n(\pi_n(p))\tau_n(\pi_n(q_i))=\tau_n(p_n)\tau_n(q_{i,n}),$$ for every $n\in\mathbb N$ and $1\leq i\leq 3$. Altogether, this also contradicts the first paragraph of the proof.
 \end{proof}

\section{A construction of II$_1$ factors}\label{construction}
In this section, we introduce a new construction of II$_1$ factors which we will use iteratively to build the II$_1$ factor in Theorem \ref{main lifting theorem}.

%\begin{definition} Let $(M,\tau)$ be a tracial von Neumann algebra and $u_1,u_2\in\mathcal U(M)$. We define a tracial von Neumann algebra,  $\Phi(M,u_1,u_2)$, as follows.Let $N=M*_{\{u_1\}''}(\{u_1\}''\overline{\otimes}\text{L}(\mathbb Z))$ and $v_1\in\emph{L}(\mathbb Z)$ a generating Haar unitary. Let $P=N*_{\{u_2,v_1\}''}(\{u_2,v_1\}''\overline{\otimes}\text{L}(\mathbb Z))$ and put $\Phi(M,u_1,u_2):=P$. \end{definition}

\begin{definition}
Let $(M,\tau)$ be a tracial von Neumann algebra and $A_1,A_2\subset M$ be von Neumann subalgebras. We define a tracial von Neumann algebra  $\Phi(M,A_1,A_2)$ as follows. Put $B_1=B_2=\text{L}(\mathbb Z)$ and define  $$\Phi(M,A_1):=M*_{A_1}(A_1\overline{\otimes}B_1)\;\;\;\text{and}$$  $$\Phi(M,A_1,A_2):=\Phi(M,A_1)*_{(A_2\bigvee B_1)}((A_2\vee B_1)\overline{\otimes}B_2).$$
 Given $u_1,u_2\in\mathcal U(M)$, we will use the notation
  $\Phi(M,u_1,u_2):=\Phi(M,\{u_1\}'',\{u_2\}'').$
\end{definition}
More generally, given von Neumann subalgebras $A_1,\cdots A_k\subset M$ one can define $\Phi(M,A_1,\cdots, A_k)$ inductively by letting $B_1=\cdots=B_k=\text{L}(\mathbb Z)$ and for every $1\leq i\leq k$
$$\Phi(M,A_1,\cdots, A_i):=\Phi(M,A_1,\cdots,A_{i-1})*_{(A_i\bigvee B_1\bigvee\cdots\bigvee B_{i-1})}((A_i\vee B_1\vee\cdots\vee B_{i-1})\overline{\otimes}B_i).$$

Here, we focus on the case $k=2$ which suffices for the purpose of proving Theorem \ref{main lifting theorem}.
The main result of this section gives necessary conditions which guarantee that $\Phi(M,A_1,A_2)$ is a II$_1$ factor. Furthermore, we prove:

\begin{theorem}\label{amalgam}
Let $(M,\tau)$ be a tracial von Neumann algebra and $A_1,A_2\subset M$ be von Neumann subalgebras such that $A_1\perp A_2$ and $M\nprec_MA_i$, for every $i=1,2$. Put $P=\Phi(M,A_1,A_2)$.

 Then $P$ is a II$_1$ factor
containing Haar unitaries $v_1,v_2\in P$ so that $v_1\in A_1'\cap P,v_2\in A_2'\cap P$ and $[v_1,v_2]=0$. Moreover, if $Q\subset M$ is a von Neumann subalgebra such that $Q\nprec_{M}A_i$, for every $1\leq i\leq 2$, then $Q'\cap P\subset M$. 

\end{theorem}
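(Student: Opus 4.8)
The natural strategy is to use Popa's deformation/rigidity machinery in the amalgamated free product $P = \Phi(M,A_1,A_2)$, exploiting the malleable deformation coming from the free product structure together with the intertwining assumptions $M \nprec_M A_i$. First I would set up notation: write $P_1 := \Phi(M,A_1) = M *_{A_1}(A_1 \overline{\otimes} B_1)$ and $P = P_1 *_{A_2 \vee B_1}((A_2 \vee B_1)\overline{\otimes}B_2)$, with $v_1$ a Haar unitary generating $B_1$ (which commutes with $A_1$ by construction) and $v_2$ a Haar unitary generating $B_2$ (which commutes with $A_2 \vee B_1$, hence with both $A_2$ and $v_1$). That $P$ is a II$_1$ factor and contains such commuting Haar unitaries is the first part of the theorem, which I assume; the point is to prove $Q' \cap P \subset M$ for $Q \subset M$ with $Q \nprec_M A_i$.

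The heart of the argument is a two-step application of the key technical result of \cite{IPP}: in an amalgamated free product $N = N_1 *_B N_2$, if a subalgebra $Q \subset N_1$ satisfies $Q \nprec_{N_1} B$, then the relative commutant $Q' \cap N$ is contained in $N_1$ — and more precisely, any rigid (e.g.\ property (T), or more generally "does not embed into $B$") subalgebra of $N$ that has "large" intersection with $N_1$ must essentially sit inside $N_1$; the clean statement I want is the control of quasi-normalizers / relative commutants via \cite[Theorem 1.1 or Theorem 3.1]{IPP}. Concretely, for the outer layer $P = P_1 *_{A_2 \vee B_1}((A_2 \vee B_1)\overline{\otimes}B_2)$: I need $Q \nprec_{P_1} (A_2 \vee B_1)$. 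This should follow from $Q \nprec_M A_2$ together with the fact that $A_2 \vee B_1 = A_2 *_{A_1 \cap A_2} (\cdots)$ — actually since $A_1 \perp A_2$ and both are diffuse, $A_1 \cap A_2 = \mathbb{C}$, so $A_2$ and $B_1$ are free inside $P_1$; one argues that $Q \prec_{P_1} A_2 \vee B_1$ would force, via a standard free-product intertwining lemma (as in \cite{IPP} or \cite{Vaes} mixing arguments), that $Q \prec_M A_1$ or $Q \prec_M A_2$, contradicting the hypotheses. Granting $Q \nprec_{P_1}(A_2 \vee B_1)$, \cite{IPP} yields $Q' \cap P \subset P_1$. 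Then I repeat the argument one level down inside $P_1 = M *_{A_1}(A_1 \overline{\otimes}B_1)$: I need $Q \nprec_M A_1$, which is a hypothesis, so \cite{IPP} gives $Q' \cap P_1 \subset M$. Combining, $Q' \cap P \subset Q' \cap P_1 \subset M$, as desired.

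The step I expect to be the main obstacle is verifying the intertwining non-embedding $Q \nprec_{P_1}(A_2 \vee B_1)$ from the given hypothesis $Q \nprec_M A_2$; the subtlety is that $A_2 \vee B_1$ is strictly larger than $A_2$ (it contains the free Haar unitary $v_1$), so one cannot directly quote $Q \nprec_M A_2$. The resolution should be a "transfer" lemma for amalgamated free products stating that if $Q \subset M \subset P_1 = M *_{A_1}(A_1\overline\otimes B_1)$ and $Q \prec_{P_1}(A_2 \vee B_1)$, then in fact $Q \prec_{M} A_2$ (using that $B_1$ is "free and orthogonal" relative to $M$ over $A_1$, and $A_1 \perp A_2$); this is in the spirit of \cite[Proposition 8.?]{IPP} or the location-of-intertwiners results for amalgamated free products. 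I would also need to handle the symmetric bookkeeping — that the hypotheses $M \nprec_M A_i$ (used for factoriality) and $Q \nprec_M A_i$ are genuinely what is needed for each layer — and to confirm the diffuseness conditions (e.g.\ $Q$ diffuse, or at least $Q$ not embeddable into the amalgam) required to invoke the \cite{IPP} relative commutant theorem. Once these intertwining facts are in place, the conclusion $Q' \cap P \subset M$ is immediate from the two-step containment.
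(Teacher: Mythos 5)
Your high-level plan for the ``moreover'' assertion is exactly the paper's: write $P = N *_{(A_2\vee B_1)}\bigl((A_2\vee B_1)\overline{\otimes}B_2\bigr)$ with $N=\Phi(M,A_1)=M*_{A_1}(A_1\overline{\otimes}B_1)$, show $Q\nprec_{N}(A_2\vee B_1)$, then apply \cite[Theorem 1.1]{IPP} twice to get $Q'\cap P\subset N$ and $Q'\cap N\subset M$. You also correctly identify where the real work is: passing from $Q\nprec_M A_2$ to $Q\nprec_N(A_2\vee B_1)$.

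However, there are two genuine gaps. First, that ``transfer lemma'' is not a citation but a statement you need to prove, and it is the main technical content of the argument. The paper isolates it as Lemma~\ref{free}: in an AFP $M_1*_B M_2$ with $A_i\subset M_i$ and $A_i\perp B$, if $Q\subset M_1$ satisfies $Q\prec A_1\vee A_2$ and $Q\nprec_{M_1}B$, then $Q\prec_{M_1}A_1$. The proof is not a direct quotation from \cite{IPP}: it requires first observing $A_1$ and $A_2$ are free (so $A_1\vee A_2 = A_1 * A_2$ because $A_i\perp B$), then running the $\mathrm m_\rho$-deformation argument against the image algebra $\varphi(qQq)\subset A_1*A_2$ using the crucial estimate $\|\mathrm m_\rho(x)-x\|_2\le(1-\rho)\|x\|_2$ for $x\in M_1$, and finally invoking \cite[Theorem 5.4]{PV09} plus \cite[Remark 3.8]{Va07} and \cite[Theorem 1.1]{IPP} to locate the intertwiner in $A_1$ rather than $A_2$. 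Your sketch of the hypotheses is also slightly off: as stated, $Q\prec_{P_1}(A_2\vee B_1)\Rightarrow Q\prec_M A_2$ is false without the extra assumption $Q\nprec_M A_1$, which you do have available but did not list as a hypothesis of the transfer lemma.

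Second, you ``assume'' the factoriality of $P$, but this is not a freebie one can defer: the hypothesis is only that $M$ is a tracial von Neumann algebra with $M\nprec_M A_i$, not that $M$ is already a II$_1$ factor, so applying the moreover assertion to $Q=M$ only yields $\mathcal Z(P)=P'\cap M\subset\mathcal Z(M)$, which is not $\mathbb C1$ in general. The paper closes this by showing $B_1'\cap M= A_1$ and $B_2'\cap M\subset(A_2\vee B_1)\cap M$, so $\mathcal Z(P)\subset A_1\cap(A_2\vee B_1)$, and then proving the separate claim $A_1\perp(A_2\vee B_1)$ directly from the AFP word-structure, which forces $A_1\cap(A_2\vee B_1)=\mathbb C1$. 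Without this step the theorem (which requires $P$ to be a factor) is not established under the stated hypotheses.
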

In the proof of Theorem \ref{main lifting theorem}, we will use the following immediate corollary of Theorem \ref{amalgam}
\begin{corollary}\label{amalgam2}
Let $(M,\tau)$ be a tracial von Neumann algebra having no type I direct summand. Let $u_1,u_2\in\mathcal U(M)$ such that $\{u_1\}''\perp\{u_2\}''$ and put $P=\Phi(M,u_1,u_2)$.

 Then $P$ is a II$_1$ factor
containing Haar unitaries $v_1,v_2\in P$ so that $[u_1,v_1]=[u_2,v_2]=[v_1,v_2]=0$. Moreover, if $Q\subset M$ is a von Neumann subalgebra such that $Q\nprec_{M}\{u_i\}''$, for every $1\leq i\leq 2$, then $Q'\cap P\subset M$. 

\end{corollary}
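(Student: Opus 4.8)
The plan is to deduce Corollary \ref{amalgam2} directly from Theorem \ref{amalgam} by checking that its hypotheses hold in the present setting. The only gap between the two statements is that Theorem \ref{amalgam} requires the subalgebras $A_i=\{u_i\}''$ to satisfy $M\nprec_M A_i$, whereas Corollary \ref{amalgam2} only assumes that $M$ has no type I direct summand. So the heart of the matter is the implication: if $M$ has no type I direct summand and $A\subset M$ is abelian, then $M\nprec_M A$. First I would recall the characterization of intertwining from Theorem \ref{popa intertwining}: $M\nprec_M A$ means there is a sequence of unitaries $w_n\in\mathcal U(M)$ with $\|E_A(x^*w_n y)\|_2\to 0$ for all $x,y\in M$.

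The standard fact I would invoke is that for a diffuse von Neumann subalgebra $A$ of a tracial von Neumann algebra $M$ with separable predual, one has $M\prec_M A$ if and only if a corner of $M$ can be embedded with expectation into a corner of a \emph{finite} amplification of $A$; more concretely, $M\prec_M A$ forces (by considering the basic construction and using that $A$ is abelian hence $\langle M, e_A\rangle$ is semifinite with a trace making $M e_A M$ of a specific size) a nonzero type I piece, or alternatively one can argue: if $M \prec_M A$ with $A$ abelian, then there is a projection $p_0 \in M$, a nonzero $\ast$-homomorphism $\theta: p_0 M p_0 \to q_0 A q_0$ and a partial isometry $v$ with $\theta(x)v = vx$. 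Setting $z$ to be the central support in $M$ of the projection $v^*v \le p_0$, one shows $Mz$ embeds (via a reduction) into an abelian algebra, which forces $Mz$ to be type I, contradicting the hypothesis that $M$ has no type I direct summand. I would present this cleanly: since $v^*v \in (p_0 M p_0)' \cap p_0 M p_0$ and $vv^* \le q_0 \in A$, the map $x \mapsto v x v^*$ is a $\ast$-isomorphism of $v^*v\, M\, v^*v$ onto a subalgebra of $q_0 A q_0$, which is abelian; hence $v^*v\, M\, v^*v$ is abelian, and therefore (a corner of $M$ being abelian) $M$ has an abelian corner, forcing a type I direct summand — the standard argument being that the central support of $v^*v$ then gives a type I direct summand of $M$. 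This is exactly the contrapositive of what we want.

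Having established $M\nprec_M\{u_i\}''$ for $i=1,2$, and noting that $\{u_1\}''\perp\{u_2\}''$ is assumed, Theorem \ref{amalgam} applies with $A_i=\{u_i\}''$ and $P=\Phi(M,u_1,u_2)$. It yields that $P$ is a II$_1$ factor containing Haar unitaries $v_1,v_2\in P$ with $v_1\in\{u_1\}'\cap P$, $v_2\in\{u_2\}'\cap P$ and $[v_1,v_2]=0$; translating $v_i\in\{u_i\}'\cap P$ into commutator notation gives exactly $[u_1,v_1]=[u_2,v_2]=[v_1,v_2]=0$. The ``Moreover'' clause transfers verbatim: if $Q\subset M$ satisfies $Q\nprec_M\{u_i\}''$ for $i=1,2$, then by Theorem \ref{amalgam}, $Q'\cap P\subset M$.

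I expect the main (indeed only) obstacle to be the lemma that a diffuse abelian (or here, any abelian, after noting $\{u_i\}''$ could be finite-dimensional — but then one replaces $u_i$ by a diffuse abelian extension, or observes $M\nprec_M A$ holds trivially when $A$ is finite-dimensional and $M$ is diffuse since $E_A$ has finite-dimensional range and one can use a Haar-unitary-like sequence orthogonal to it) subalgebra $A$ of $M$ satisfies $M\nprec_M A$ whenever $M$ has no type I direct summand. This is essentially folklore and appears in various guises in Popa's work and in \cite{IPP}; I would either cite it or include the short argument above via the intertwining data $(\theta, v)$ and the observation that a corner $v^*v\,M\,v^*v$ embeds isomorphically into the abelian algebra $q_0 A q_0$. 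Everything else is a direct quotation of Theorem \ref{amalgam}. Since this corollary is labeled an ``immediate corollary,'' the write-up should be correspondingly brief.

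\begin{proof}[Proof of Corollary \ref{amalgam2}]
We verify the hypotheses of Theorem \ref{amalgam} for $A_i=\{u_i\}''$. By assumption $A_1\perp A_2$. It remains to check that $M\nprec_M A_i$ for $i=1,2$. Suppose towards a contradiction that $M\prec_M A_i$ for some $i$. By Theorem \ref{popa intertwining}, there exist projections $p_0\in M$, $q_0\in A_i$, a $\ast$-homomorphism $\theta:p_0Mp_0\rightarrow q_0A_iq_0$ and a nonzero partial isometry $v\in q_0Mp_0$ with $\theta(x)v=vx$ for all $x\in p_0Mp_0$. Then $v^*v\in (p_0Mp_0)'\cap p_0Mp_0$ and $vv^*\leq q_0$. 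Moreover, for $x\in v^*v\,M\,v^*v$ we have $vxv^*=\theta(x)vv^*\in q_0A_iq_0$, and $x\mapsto vxv^*$ is a trace-scaled $\ast$-isomorphism of $v^*v\,M\,v^*v$ onto a von Neumann subalgebra of the abelian algebra $q_0A_iq_0$. Hence $v^*v\,M\,v^*v$ is abelian, so $M$ has an abelian corner and therefore a nonzero type I direct summand (take the central support in $M$ of $v^*v$), contradicting the hypothesis on $M$. Thus $M\nprec_MA_i$ for $i=1,2$.

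Theorem \ref{amalgam} now applies and gives that $P=\Phi(M,u_1,u_2)$ is a II$_1$ factor containing Haar unitaries $v_1,v_2\in P$ with $v_1\in A_1'\cap P$, $v_2\in A_2'\cap P$ and $[v_1,v_2]=0$; since $A_i=\{u_i\}''$, this means $[u_1,v_1]=[u_2,v_2]=[v_1,v_2]=0$. Finally, if $Q\subset M$ is a von Neumann subalgebra with $Q\nprec_M\{u_i\}''$ for $i=1,2$, then the last assertion of Theorem \ref{amalgam} yields $Q'\cap P\subset M$.
\end{proof}
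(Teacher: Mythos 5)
Your proof takes the same route as the paper, which simply notes that $M\nprec_M\{u_i\}''$ follows from $M$ having no type I direct summand and then invokes Theorem~\ref{amalgam}; you spell out the standard argument behind that implication. There is, however, one imprecise step: you claim $vxv^*=\theta(x)vv^*\in q_0A_iq_0$, but $vv^*$ lies only in $\theta(p_0Mp_0)'\cap q_0Mq_0$ and need not belong to $A_i$, so the image of $x\mapsto vxv^*$ is a subalgebra of $q_0Mq_0$, not a priori of $q_0A_iq_0$. The conclusion that $v^*vMv^*v$ is abelian nevertheless holds and the fix is short: since $vv^*$ commutes with $\theta(p_0Mp_0)$ (take adjoints in $\theta(x^*)v=vx^*$ to get $v^*\theta(x)=xv^*$), for $x,y\in v^*vMv^*v$ one computes $(vxv^*)(vyv^*)=\theta(x)vv^*\theta(y)vv^*=\theta(x)\theta(y)vv^*=\theta(y)\theta(x)vv^*=(vyv^*)(vxv^*)$, using commutativity of $q_0A_iq_0$; injectivity of $x\mapsto vxv^*$ on $v^*vMv^*v$ then gives $xy=yx$. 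With this correction your argument that $M$ has a nonzero type I direct summand (via the central support of $v^*v$), and hence the contradiction, is valid, and the remainder is a direct application of Theorem~\ref{amalgam}, exactly as in the paper.
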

Since $M$ has no type I direct summand, $M\nprec_M\{u_i\}''$, for every $1\leq i\leq 2$, and thus Corollary \ref{amalgam2} follows from Theorem \ref{amalgam}.

\begin{remark}
Let us argue that the condition that $\{u_1\}''\perp \{u_2\}''$ in Corollary \ref{amalgam2} is necessary in order to get that $M$ is a II$_1$ factor. Thus, the condition that $A_1\perp A_2$ in Theorem \ref{amalgam} is also necessary.
In the context of Corollary \ref{amalgam2}, assume that $M$ is generated by $u_2$ and $\text{E}_{\{u_1\}''}(u_2)$. 
Denote $N:=\Phi(M,\{u_1\}'')=M*_{\{u_1\}''}(\{u_1\}''\overline{\otimes}\text{L}(\mathbb Z))$ and let $v_1\in\text{L}(\mathbb Z)$ be a generating Haar unitary. By \cite[Theorem 1.1]{IPP} we get that  $\text{L}(\mathbb Z)'\cap N=\{u_1\}''\overline{\otimes}\text{L}(\mathbb Z)$. This gives that $$\text{E}_{\text{L}(\mathbb Z)'\cap N}(u_2)=\text{E}_{\{u_1\}''\overline{\otimes}\text{L}(\mathbb Z)}(u_2)=\text{E}_{\{u_1\}''}(u_2).$$ On the other hand,  $\text{E}_{\text{L}(\mathbb Z)'\cap N}(u_2)$ is the $\|\cdot\|_2$-limit of the sequence $(\frac{1}{n}\sum_{k=1}^nv_1^ku_2{v_1^*}^k)_n$ and thus belongs to $\{u_2,v_1\}''$. The last two facts together imply that $\text{E}_{\{u_1\}''}(u_2)\in \{u_2,v_1\}''$. Since $M$ is generated by $u_2$ and $\text{E}_{\{u_1\}''}(u_2)$, we get that $M\subset \{u_2,v_1\}''$. Since $N$ is generated by $M$ and $v_1$,  we get that $\{u_2,v_1\}''=N$. Thus, $\Phi(M,u_1,u_2)=N\overline{\otimes}\text{L}(\mathbb Z)$ is not a factor, so the conclusion of Corollary \ref{amalgam2} does not hold.

Now,  the existence of $u_1,u_2\in\mathcal U(M)$ such that $\{u_2,\text{E}_{\{u_1\}''}(u_2)\}''=M$,  can be checked whenever $M$ is generated by two unitaries $u_1,\widetilde{u_2}$ such that $\{u_1\}''\perp\{\widetilde{u_2}\}''$ (e.g., if $M=\text{L}(\Gamma)$, for any $2$-generated group $\Gamma$). To see this, write $\widetilde{u_2}=\exp(ih)$, where $h\in\{\widetilde{u_2}\}''$ is a self-adjoint element, let $n\in\mathbb N$ such that $\tau(\exp(\frac{ih}{n}))\not=0$ and define $u_2=u_1\exp(\frac{ih}{n})$. Then $\text{E}_{\{u_1\}''}(u_2)=\tau(\exp(\frac{ih}{n}))u_1$ and thus $\{u_2,\text{E}_{\{u_1\}''}(u_2)\}''=\{u_1,\exp(\frac{ih}{n})\}''=\{u_1,\widetilde{u_2}\}''=M$.
\end{remark}

The proof of Theorem \ref{amalgam} relies on the main technical result of \cite{IPP}.
To recall the latter result, let $(M_1,\tau_1)$ and $(M_2,\tau_2)$ be tracial von Neumann algebras with a common von Neumann subalgebra $B$ such that ${\tau_1}_{|B}={\tau_2}_{|B}$.
 Let $M=M_1*_{B}M_2$ be the amalgamated free product with its canonical trace $\tau$. By \cite[Section 5.1]{PV09}, for $0<\rho<1$ we have a unital tracial completely positive map $\text{m}_\rho:M\rightarrow M$ such that $\text{m}_\rho(b)=b$, for every $b\in B$, and $\text{m}_\rho(x_1x_2\cdots x_n)=\rho^nx_1x_2\cdots x_n$, for every $x_i\in M_{i_j}\ominus B$, where $i_j\in\{1,2\}$, for every $1\leq j\leq n$, and $i_j\not=i_{j+1}$, for every $1\leq j\leq n-1$. Then \begin{equation}\label{def}\text{$\lim\limits_{\rho\rightarrow 1}\|\text{m}_\rho(x)-x\|_2=0$, for every $x\in M$.}\end{equation}
 
 The following is the main technical result of \cite{IPP}, formulated here as in \cite[Theorem 5.4]{PV09}, see also \cite[Section 5]{Ho07}.

\begin{theorem}\label{IPP05}
 Let $(M_1,\tau_1)$ and $(M_2,\tau_2)$ be tracial von Neumann algebras with a common von Neumann subalgebra $B$ such that ${\tau_1}_{|B}={\tau_2}_{|B}$.
 Let $M=M_1*_{B}M_2$ be the amalgamated free product with its canonical trace $\tau$. 
 Let $Q\subset pMp$ be a von Neumann subalgebra. Assume that there are $0<\rho<1$ and $c>0$ such that $\|\emph{m}_\rho(u)\|_2\geq c$, for every $u\in\mathcal U(Q)$.
 
 Then $Q\prec_MM_1$ or $Q\prec_MM_2$.
\end{theorem}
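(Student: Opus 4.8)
\emph{Proof strategy.} The plan is to prove the contrapositive: assuming $0<\rho<1$ and $c>0$ with $\|\text{m}_\rho(u)\|_2\ge c$ for every $u\in\mathcal U(Q)$, I will deduce $Q\prec_M M_1$ or $Q\prec_M M_2$. The starting point is an elementary estimate coming from the word-length grading of the amalgamated free product. Decompose $L^2(M)=\bigoplus_{k\ge0}\mathcal H_k$, where $\mathcal H_0=L^2(B)$ and, for $k\ge1$, $\mathcal H_k$ is the closed span of the reduced words $x_1\cdots x_k$ with $x_j\in M_{i_j}\ominus B$ and $i_j\ne i_{j+1}$; by definition $\text{m}_\rho$ acts on $\mathcal H_k$ as the scalar $\rho^k$. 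Denoting by $P_k$ the orthogonal projection onto $\mathcal H_k$ and using $\rho^{2k}\le\rho^k$, one obtains
$$\langle\text{m}_\rho(u),u\rangle=\sum_{k\ge0}\rho^k\|P_k u\|_2^2\ \ge\ \sum_{k\ge0}\rho^{2k}\|P_k u\|_2^2=\|\text{m}_\rho(u)\|_2^2\ \ge\ c^2,\qquad u\in\mathcal U(Q).$$

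Next I would bring in Popa's $s$-malleable deformation adapted to amalgamated free products (\cite{IPP}; see also \cite[Section 5]{PV09}). Set $\widehat B=B\,\overline{\otimes}\,\text{L}(\mathbb Z)$, where we write the canonical generator of $\text{L}(\mathbb Z)$ as $\exp(ih)$ with $h=h^*$; put $\widetilde M_i=M_i*_B\widehat B$ $(i=1,2)$ and $\widetilde M=\widetilde M_1*_{\widehat B}\widetilde M_2$. Then $M=M_1*_BM_2$ embeds trace-preservingly into $\widetilde M$, and $\alpha_t:=\Ad\big(\exp(ith)\big)\in\Aut(\widetilde M)$ defines a one-parameter group. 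Conjugating a reduced word $x_1\cdots x_n\in M$ by $\exp(ith)$ inserts the neutral element $\exp(-ith)\exp(ith)$ at every junction, and a direct computation yields $E_M\circ\alpha_t|_M=\text{m}_{f(t)}$, where $f(t)=|\tau(\exp(ith))|^2\in(0,1]$ is continuous with $f(0)=1$. Choosing $t_0>0$ small enough that $f(t_0)\ge\rho$ (hence $f(t_0)^k\ge\rho^k$ for all $k$), one gets
$$\langle\alpha_{t_0}(u),u\rangle=\langle E_M(\alpha_{t_0}(u)),u\rangle=\langle\text{m}_{f(t_0)}(u),u\rangle\ \ge\ \langle\text{m}_\rho(u),u\rangle\ \ge\ c^2,\qquad u\in\mathcal U(Q).$$

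I would then extract a nonzero intertwiner by a standard convexity argument. Let $a\in L^2(\widetilde M)$ be the unique element of minimal $\|\cdot\|_2$-norm in the $\|\cdot\|_2$-closed convex hull of $\{\alpha_{t_0}(u)u^*:u\in\mathcal U(Q)\}$, a set of contractions contained in $\alpha_{t_0}(p)\widetilde M p$. The $\|\cdot\|_2$-isometries $\xi\mapsto\alpha_{t_0}(v)\xi v^*$ $(v\in\mathcal U(Q))$ preserve this convex hull because $\alpha_{t_0}(v)\big(\alpha_{t_0}(u)u^*\big)v^*=\alpha_{t_0}(vu)(vu)^*$, so by uniqueness $\alpha_{t_0}(v)a=av$ for all $v\in Q$. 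Since each $\langle\alpha_{t_0}(u)u^*,\alpha_{t_0}(p)\rangle=\tau\big(u^*\alpha_{t_0}(u)\big)=\langle\text{m}_{f(t_0)}(u),u\rangle\ge c^2$ is real, $\langle a,\alpha_{t_0}(p)\rangle\ge c^2$, so $a\ne0$; as $a^*a\in Q'\cap\widetilde M$, passing to the partial isometry in the polar decomposition of $a$ I may assume $a$ is a nonzero partial isometry with $a^*a\le p$, $aa^*\le\alpha_{t_0}(p)$, and $\alpha_{t_0}(v)a=av$ for all $v\in Q$. This already yields $\alpha_{t_0}(Q)\prec_{\widetilde M}M$: for $v\in\mathcal U(Q)$ we have $a^*\alpha_{t_0}(v)a=(a^*a)v$, so $\|E_M(a^*\alpha_{t_0}(v)a)\|_2\ge|\tau(v^*(a^*a)v)|\,\|v\|_2^{-1}=\tau(a^*a)\,\tau(p)^{-1/2}>0$ uniformly in $v$, which is exactly condition $(2)$ of Theorem \ref{popa intertwining} (taking $x=y=a$ there). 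Equivalently, because $\alpha_{t_0}$ is inner, $b:=\exp(-it_0h)\,a$ is a nonzero element of $Q'\cap p\widetilde M p$ with $E_{\widehat B}(b)\ne0$.

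The final step is to upgrade $\alpha_{t_0}(Q)\prec_{\widetilde M}M$ (equivalently, the existence of $b$) to $Q\prec_M M_1$ or $Q\prec_M M_2$, and this is the step I expect to be the main obstacle: it is the structural heart of \cite{IPP} (\cite[Theorem 1.1]{IPP}, re-proved in \cite[Theorem 5.4]{PV09}). Here one genuinely exploits that $M=M_1*_BM_2$ is a \emph{mixing} amalgamated free product relative to $B$ in the sense of \cite{IPP} --- the $M$-$M$ bimodule $L^2(M)\ominus\big(L^2(M_1)+L^2(M_2)\big)$ embeds into an infinite amplification of the coarse $B$-bimodule --- and that $M$ sits in $\widetilde M=\widetilde M_1*_{\widehat B}\widetilde M_2$ compatibly with this structure. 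The mechanism: assuming toward a contradiction that $Q\not\prec_M M_1$ and $Q\not\prec_M M_2$ (hence also $Q\not\prec_M B$), one expands $b$ along the $\widehat B$-reduced-word decomposition of $\widetilde M$, propagates the commutation $v_n b=bv_n$ through it for a sequence $(v_n)\subset\mathcal U(Q)$ furnished by the sequence criterion of Theorem \ref{popa intertwining} (witnessing the failure of intertwining into $M_1$ and into $M_2$), and uses the mixing property to conclude that every word-length $\ge1$ component of $b$ must vanish; this localizes $Q$ inside $M$ and forces $Q\prec_M M_1$ or $Q\prec_M M_2$ after all --- a contradiction. Both non-intertwining hypotheses are indispensable here (e.g.\ any diffuse subalgebra of $B$ satisfies $B\prec_{\widetilde M}M$ trivially); carrying out this word-by-word estimate is the delicate part, for which I would refer to \cite{IPP,PV09}.
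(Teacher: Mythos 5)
Your first displayed inequality is, up to notation, the entirety of the paper's own proof: the authors simply observe that $\tau(\text{m}_{\rho^2}(u)u^*)=\|\text{m}_\rho(u)\|_2^2\geq c^2$ for every $u\in\mathcal U(Q)$ and then invoke \cite[Theorem 5.4]{PV09} as a black box. Your version $\langle\text{m}_\rho(u),u\rangle\geq\|\text{m}_\rho(u)\|_2^2\geq c^2$ is the same identity (since $\langle\text{m}_{\rho^2}(u),u\rangle=\sum_k\rho^{2k}\|P_ku\|_2^2=\|\text{m}_\rho(u)\|_2^2$), so on the part of the argument the paper actually carries out you take exactly the same route, correctly.

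The extra material, in which you sketch the proof of the cited theorem itself, contains a genuine error in the deformation, although it sits entirely inside the step that you (like the paper) ultimately delegate to \cite{IPP,PV09}. With a single copy $\widehat B=B\,\overline{\otimes}\,\text{L}(\mathbb Z)$ amalgamated between the two sides and $\alpha_t=\Ad(\exp(ith))$, one does \emph{not} get $E_M\circ\alpha_t|_M=\text{m}_{f(t)}$. Writing $\exp(ith)=\tau(\exp(ith))1+w$ with $w\in\widehat B\ominus B$, every term of $\exp(ith)x_1\cdots x_n\exp(-ith)$ containing a letter $w$ or $w^*$ is a reduced word of $M*_B\widehat B$ with a letter in $\widehat B\ominus B$ and is killed by $E_M$; only the scalar parts of the two outermost conjugating unitaries survive, so $E_M(\alpha_t(x))=E_B(x)+|\tau(\exp(ith))|^2\,(x-E_B(x))$ for all $x\in M$. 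This map scales every reduced word of length at least $1$ by the \emph{same} factor, so $\langle\alpha_{t_0}(u),u\rangle\geq f(t_0)>0$ holds for every unitary of $M$ whatsoever; the intertwiner $a$ your convexity argument produces therefore exists for trivial reasons, carries no information about $Q$, and cannot be upgraded to $Q\prec_MM_1$ or $Q\prec_MM_2$ (if it could, the conclusion would hold for every $Q$, which is false). The actual IPP deformation attaches an independent copy of $B\,\overline{\otimes}\,\text{L}(\mathbb Z)$ to each $M_i$, free over $B$, and takes the free product of the two inner automorphisms, so that each letter of a reduced word contributes its own factor $|\tau(\exp(ith))|^2$ and $E_M\circ\theta_t|_M$ really equals $\text{m}_{f(t)}$; this word-length sensitivity is precisely what the final (deferred) step exploits. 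Since your last paragraph refers that step to \cite{IPP,PV09} in any case, your argument still stands as a citation-based proof, matching the paper's; but the sketch as written would not assemble into a self-contained one.
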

As $\tau(\text{m}_{\rho^2}(u)u^*)=\|\text{m}_\rho(u)\|_2^2\geq c^2$, for every $u\in\mathcal U(Q)$, \cite[Theorem 5.4]{PV09} implies Theorem \ref{IPP05}.

\begin{lemma}\label{free} Let $(M_1,\tau_1)$ and $(M_2,\tau_2)$ be tracial von Neumann algebras with a common von Neumann subalgebra $B$ such that ${\tau_1}_{|B}={\tau_2}_{|B}$.
 Let $M=M_1*_{B}M_2$ be the amalgamated free product with its canonical trace $\tau$. For $i\in\{1,2\}$, let $A_i\subset M_i$ be a von Neumann subalgebra with $A_i\perp B$. 
Let $Q\subset M_1$ be a von Neumann subalgebra such that $Q\prec_{M}A_1\vee A_2$ and $Q\nprec_{M_1}B$. 

Then $Q\prec_{M_1}A_1$.
\end{lemma}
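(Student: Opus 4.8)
The plan is to combine Popa's intertwining-by-bimodules with the main technical result of \cite{IPP} (Theorem~\ref{IPP05}), applied to \emph{both} free product decompositions at hand: $M=M_1*_BM_2$ and $N:=A_1\vee A_2$. The first thing I would record is the structural consequence of the orthogonality hypotheses. Since $A_i\perp B$, we have $A_i\ominus\mathbb C1\subseteq M_i\ominus B$, so every alternating word in $A_1\ominus\mathbb C1$ and $A_2\ominus\mathbb C1$ is already a reduced word of $M_1*_BM_2$; hence $N$ is canonically the free product $A_1*A_2$, and (comparing word lengths) the deformation $\text{m}_\rho$ of $M$ restricts on $N$ to the free-product deformation $\text{m}^N_\rho$ of $A_1*_{\mathbb C}A_2$. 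I would also invoke the standard consequences of $Q\subseteq M_1$ together with $Q\nprec_{M_1}B$: $Q$ is diffuse, $Q\nprec_MM_2$ (by \cite{IPP}, together with the standard descent of embeddings into $B$ for amalgamated free products), and — since relative commutants lie inside quasi-normalizers — $R'\cap pMp\subseteq pM_1p$ for every projection $p\in Q$ and every subalgebra $R\subseteq pM_1p$ with $R\nprec_{M_1}B$, by \cite[Theorem~1.1]{IPP}.

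The core of the argument is to show $Q\prec_MA_1$; the claimed $Q\prec_{M_1}A_1$ is then obtained by a descent step. By Theorem~\ref{popa intertwining} applied to $Q\prec_MN$, fix projections $p_0\in Q$, $q_0\in N$, a $*$-homomorphism $\theta:p_0Qp_0\to q_0Nq_0$ and a nonzero partial isometry $v\in q_0Mp_0$ with $\theta(a)v=va$; after the standard normalizations I would arrange $v^*v=p_0$ and $vv^*=q_0$, so that $\theta(a)=vav^*$ (this normalization requires some care, see below), and set $Q_0:=\theta(p_0Qp_0)\subseteq q_0Nq_0$. Applying Theorem~\ref{IPP05} to the free product $N=A_1*_{\mathbb C}A_2$ and to $Q_0$: if $\|\text{m}^N_\rho(u)\|_2\geq c$ for all $u\in\mathcal U(Q_0)$ and some $0<\rho<1$, $c>0$, then $Q_0\prec_NA_1$ or $Q_0\prec_NA_2$. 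The second option is excluded, since $Q\prec_MQ_0$ (via $v$) would then give $Q\prec_MA_2\subseteq M_2$, contradicting $Q\nprec_MM_2$; so $Q_0\prec_NA_1$, and composing once more with $Q\prec_MQ_0$ yields $Q\prec_MA_1$.

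To verify the hypothesis of Theorem~\ref{IPP05}, suppose no such $\rho,c$ exist; then for a fixed $\rho<1$ there are $a_n\in\mathcal U(p_0Qp_0)$ with $\|\text{m}^N_\rho(\theta(a_n))\|_2\to 0$, hence $\|\text{m}_\rho(va_nv^*)\|_2\to 0$ using $\theta(a_n)=va_nv^*$ and $\text{m}^N_\rho=\text{m}_\rho|_N$. Expanding $va_nv^*$ into its homogeneous components of fixed reduced-word length in $M_1*_BM_2$, the bound $\sum_k\rho^{2k}\|(va_nv^*)_k\|_2^2\to0$ forces each fixed-length component to tend to $0$ in $\|\cdot\|_2$; since $E_{M_1}(x^*(va_nv^*)y)$ depends on only finitely many such components (for $x,y\in M$), this gives $\|E_{M_1}(x^*(va_nv^*)y)\|_2\to0$ for all $x,y\in M$. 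Taking $x=y=v$ yields $E_{M_1}(v^*va_nv^*v)=E_{M_1}(a_n)=a_n$, so $\tau(p_0)^{1/2}=\|a_n\|_2\to0$, a contradiction. For the descent step, take $p_0',q_0',\theta',v'$ witnessing $Q\prec_MA_1$ with $\theta':p_0'Qp_0'\to q_0'A_1q_0'$ and $v'^*v'=p_0'$. Applying $E_{M_1}$ to $\theta'(a)v'=v'a$ — legitimate since $a,\theta'(a)\in M_1$ — gives $\theta'(a)v_0'=v_0'a$ for $v_0':=E_{M_1}(v')$; if $v_0'\neq0$ this already witnesses $Q\prec_{M_1}A_1$. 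The case $v_0'=0$ is ruled out using $Q\nprec_{M_1}B$: then $v'^*v'=p_0'$ and $v'v'^*$ lie in $M_1$ (the latter because $\theta'(p_0'Qp_0')\nprec_{M_1}B$, by transitivity of intertwining and the standard descent of embeddings into $B$), so $v'(p_0'Qp_0')v'^*\subseteq M_1$; a computation with reduced words, using $E_{M_1}(v')=0$ and $A_1\perp B$, then shows that $p_0'Qp_0'$ embeds into $B$ inside $M_1$, contradicting the hypothesis.

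I expect the main obstacle to be the deformation estimate of the third paragraph together with the normalization it relies on: one must connect a rigidity-type estimate inside $N=A_1*A_2$ to the word-length geometry inside $M=M_1*_BM_2$, and the clean choice $vv^*=q_0$ is subtle because the usual trick of amplifying by matrices would destroy the orthogonality $A_i\perp B$ (one has $\mathbb M_n\otimes A_i\not\perp\mathbb M_n$ inside $\mathbb M_n\otimes M_i$), so the amplification must be avoided or handled directly. Throughout, the hypothesis $Q\nprec_{M_1}B$ is exactly what makes the quasi-normalizer theorem of \cite{IPP} applicable, and it is used crucially in every step.
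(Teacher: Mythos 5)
Your high-level plan matches the paper's: recognize that $A_1\perp B$ and $A_2\perp B$ force $A_1\vee A_2=A_1*A_2$, observe that $\text{m}_\rho$ restricts on $A_1*A_2$ to the free-product deformation, apply Theorem~\ref{IPP05} to the corner of $Q$ conjugated into $A_1*A_2$, rule out the $A_2$ branch, and descend to $M_1$. The problem — and you flag it yourself at the end — is that your verification of the $\text{m}_\rho$-lower bound genuinely requires $vv^*=q_0$, which you cannot arrange here: the contradiction argument in your third paragraph begins with $\|\text{m}^N_\rho(\theta(a_n))\|_2\to 0$ and then rewrites this as $\|\text{m}_\rho(va_nv^*)\|_2\to 0$, which uses $\theta(a_n)=va_nv^*$ and hence $vv^*=q_0$. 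Without that identity, $va_nv^*=\theta(a_n)vv^*$ need not even lie in $N$, its reduced-word decomposition in $M_1*_BM_2$ has no reason to control $\text{m}^N_\rho(\theta(a_n))$, and the whole chain (homogeneous components $\to$ smallness of $E_{M_1}(x^*(va_nv^*)y)$ $\to$ contradiction at $x=y=v$) collapses. So this is a real gap, not a deferrable technicality.

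The paper closes this gap with a different — and cleaner — verification that needs no normalization of $v$. It keeps $v$ as an arbitrary nonzero partial isometry in $pMq$ with $\varphi(x)v=vx$ and uses two uniform estimates: (i) the almost-bimodularity of the u.c.p.\ map $\text{m}_\rho$ from \cite[Corollary, Section 1.1.2]{Po01}, $\sup_{x\in (M)_1}\|\text{m}_\rho(xv)-\text{m}_\rho(x)v\|_2\to 0$ as $\rho\to 1$; and (ii) the fact that for $x\in M_1$ one has $\text{m}_\rho(x)=\text{E}_B(x)+\rho(x-\text{E}_B(x))$, hence $\sup_{x\in (qQq)_1}\|\text{m}_\rho(x)-x\|_2\to 0$ because $Q\subset M_1$. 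Chaining $\text{m}_\rho(\varphi(x))v\approx \text{m}_\rho(\varphi(x)v)=\text{m}_\rho(vx)\approx v\text{m}_\rho(x)\approx vx$ gives $\sup_{x\in(qQq)_1}\|\text{m}_\rho(\varphi(x))v-vx\|_2\to 0$, and then for $u\in\mathcal U(qQq)$ the inequalities $\|\text{m}_\rho(\varphi(u))\|_2\geq\|\text{m}_\rho(\varphi(u))v\|_2\geq\|vu\|_2-\|v\|_2/2=\|v\|_2/2$ (using $\|vu\|_2=\|v\|_2$ as $u$ is unitary in $qQq$) furnish exactly the hypothesis of Theorem~\ref{IPP05}, with $c=\|v\|_2/2$. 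The descent is likewise handled without your ad hoc $E_{M_1}(v')$ analysis: after normalizing the support projection of $E_A(vv^*)$ to equal $p$ (which \emph{is} achievable, unlike $vv^*=q_0$), $\varphi(qQq)\prec_A A_i$ upgrades to $Q\prec_M A_i$ by \cite[Remark 3.8]{Va07}, and then $Q\nprec_{M_1}B$ together with \cite[Theorem 1.1]{IPP} simultaneously forces $i=1$ and yields $Q\prec_{M_1}A_1$. You would do well to replace your contradiction argument and normalization by this direct bimodularity estimate, which is where the real content of the proof lies.
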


\begin{proof} Denote $A=A_1\vee A_2$. We first claim that $A_1$ and $A_2$ are freely independent inside $M$ and thus $A=A_1*A_2$.
Let  $a_j\in A_{i_j}\ominus\mathbb C1$ for $i_j\in\{1,2\}$, for every $1\leq j\leq n$, where $i_j\not=i_{j+1}$, for every $1\leq j\leq n-1$.
 Since $A_i\perp B$, for every $i\in\{1,2\}$, we get that $E_B(a_j)=0$, for every $1\leq j\leq n$. This implies that $\tau(a_1a_2\cdots a_n)=0$, proving the claim.
 
 Since $Q\prec_MA$, we can find projections $q\in Q,p\in A$, a nonzero partial isometry $v\in pMq$ and $*$-homomorphism $\varphi:qQq\rightarrow pAp$ such that $\varphi(x)v=vx$, for every $x\in qQq$.
 Moreover, we may assume that the support projection of
 $E_A(vv^*)$ is equal to $p$. 
 
 \begin{claim}\label{QA_1}
 $\varphi(qQq)\prec_A A_1$ or $\varphi(qQq)\prec_A A_2$.
 \end{claim}  
{\it Proof of Claim \ref{QA_1}.}
 Since $\text{m}_\rho$ is a unital tracial completely positive map, using \eqref{def} and \cite[Corollary, Section 1.1.2]{Po01} we deduce that \begin{equation}\label{bimod}
 \text{ $\sup_{x\in (M)_1}\|\text{m}_\rho(xv)-\text{m}_{\rho}(x)v\|_2
     \rightarrow 0$\;\; and\;\;  $\sup_{x\in (M)_1}\|\text{m}_\rho(vx)-v\text{m}_\rho(x)\|_2\rightarrow 0$,\;\; as $\rho\rightarrow 1$.}
 \end{equation}
 Now, if $x\in M_1$, then the definition of $\text{m}_\rho$ implies that $\text{m}_\rho(x)=\text{E}_B(x)+\rho(x-\text{E}_B(x))$ and thus $\|\text{m}_\rho(x)-x\|_2=(1-\rho)\|x-\text{E}_B(x)\|_2\leq (1-\rho)\|x\|_2$. In particular, since $Q\subset M_1$, we derive that 
 \begin{equation}
\label{Q}\text{$\sup_{x\in (qQq)_1}\|\text{m}_\rho(x)-x\|_2\rightarrow 0$,\;\; as $\rho\rightarrow 1$.}     
 \end{equation}
 By combining \eqref{bimod} and \eqref{Q} and using that  $\varphi(x)v=vx$, for every $x\in qQq$, it follows that $\sup_{x\in (qQq)_1}\|\text{m}_\rho(\varphi(x))v-vx\|_2\rightarrow0$, as $\rho\rightarrow 1$. Therefore, we can find $0<\rho<1$ such that $\|\text{m}_\rho(\varphi(x))v-vx\|_2<\|v\|_2/2$, for every $x\in (qQq)_1$. This implies that $$\text{$\|\text{m}_\rho(\varphi(u))\|_2\geq \|\text{m}_\rho(\varphi(u))v\|_2>\|v\|_2/2$,\;\; for every $u\in \mathcal U(qQq)$.}$$
In other words, $\|\text{m}_\rho(y)\|>\|v\|_2$, for every $y\in\mathcal U(\varphi(qQq))$. Note that the restriction of $\text{m}_\rho$ to $A$ is equal to the map $\text{m}_\rho$ on $A$ associated with the free product decomposition $A=A_1*A_2$. Since $\varphi(qQq)\subset pAp$, we can apply Theorem \ref{IPP05} to get the claim. \hfill$\square$

By Claim \ref{QA_1}, we have that $\varphi(qQq)\prec_AA_i$, for some $i\in\{1,2\}$.
Since the support projection of
 $E_A(vv^*)$ is equal to $p$,  \cite[Remark 3.8]{Va07}
implies that $Q\prec_MA_i$. Finally, since $qQq\subset M_1$, $A_i\subset M_i$ and $Q\nprec_{M_1}B$, applying \cite[Theorem 1.1]{IPP} gives that $i=1$ and $Q\prec_{M_1}A_1$.
 \end{proof}

\begin{proof}[Proof of Theorem \ref{amalgam}]
Let $P=\Phi(M,A_1,A_2)=N*_{(A_2\vee B_1)}((A_2\vee B_1)\overline{\otimes}B_2)$, where  $B_1=B_2=\text{L}(\mathbb Z)$ and $N=\Phi(M,A_1)=M*_{A_1}(A_1\overline{\otimes}B_1)$.
Let $v_1\in B_1$ and $v_2\in B_2$ be generating Haar unitaries. Since $[A_1,B_1]=[A_2,B_2]=[B_1,B_2]=\{0\}$, we get that $v_1\in A_1'\cap P,v_2\in A_2'\cap P$ and  $[v_1,v_2]=0$.

Next, we prove the moreover assertion. Let $Q\subset M$ be a von Neumann subalgebra such that $Q\nprec_{M}A_i$, for every $1\leq i\leq 2$. Since $N=M*_{A_1}(A_1\overline{\otimes}B_1)$, $A_2\perp A_1$, $B_1\perp A_1$ 
and $Q\nprec_M A_1$, by Lemma \ref{free} we conclude that
\begin{equation}\label{Q}
Q\nprec_N A_2\vee B_1.
\end{equation} 
Since $P=N*_{(A_2\vee B_1)}((A_2\vee B_1)\overline{\otimes}B_2)$, using \eqref{Q} and applying \cite[Theorem 1.1]{IPP} we get that $Q'\cap P\subset N$. Since $N=M*_{A_1}(A_1\overline{\otimes} B_1)$ and $Q\nprec_M A_1$, applying \cite[Theorem 1.1]{IPP} again gives that $Q'\cap N\subset M$. Altogether, we get that $Q'\cap P\subset M$, which proves the moreover assertion.

Since $M\nprec_M A_i$, for every $1\leq i\leq 2$. By applying the moreover assertion to $Q=M$, we get that
$M'\cap P\subset M$, hence $\mathcal Z(P)=P'\cap M\subset\mathcal Z(M)$. Thus, if $M$ is a II$_1$ factor, then $P$ is a II$_1$ factor. 

In the general case, we first note that
 \cite[Theorem 1.1]{IPP} gives that $B_1'\cap M=A_1$ and $B_2'\cap M=(B_2'\cap N)\cap M=(A_2\vee B_1)\cap M$.
Thus, $\mathcal Z(P)=P'\cap M\subset A_1\cap (A_2\vee B_1)$.
We claim that $A_1\perp (A_2\vee B_1)$. Assuming the claim, it follows that $A_1\cap (A_2\vee B_1)=\mathbb C1$ and so $P$ is a II$_1$ factor.
To justify the claim and finish the proof, denote $M_1=M, M_2=A_1\overline{\otimes} B_1$, $C_1=A_2$, $C_2=B_1$ and $B=A_1$. Thus, $N=M_1*_BM_2$ and the claim is equivalent to $B\perp (C_1\vee C_2)$. Let $x\in B$ and $y\in C_1\vee C_2$ of the form $y=y_1y_2\cdots y_n$, where $y_j\in C_{i_j}\ominus\mathbb C1$ for some $i_j\in\{1,2\}$, for every $1\leq j\leq n$, such that $i_j\not=i_{j+1}$, for every $1\leq j\leq n-1$. Since $C_i\perp B$, for every $1\leq i\leq 2$, we get that $\text{E}_B(y_j)=0$ and thus $y_j\in M_{i_j}\ominus B$, for every $1\leq j\leq n$. Moreover, $\text{E}_B(xy_1)=x\text{E}_B(y_1)=0$ and thus $xy_1\in M_{i_1}\ominus B$. This implies that $\tau(xy)=\tau((xy_1)y_2\cdots y_n)=0$. Since $C_1$ and $C_2$ are freely independent, as shown in the proof of Lemma \ref{free}, the linear span of elements $y\in C_1\vee C_2$ of the above form is dense in $(C_1\vee C_2)\ominus\mathbb C1$. Thus, we get that $B\perp (C_1\vee C_2)$, proving the claim.
\end{proof}

\section{Proofs of main results}\label{proofs of main results}
This section is devoted to the proofs of our main results. 

\subsection{Proof of Theorem \ref{main lifting theorem}}
We start by constructing the II$_1$ factor from Theorem \ref{main lifting theorem}
 by iterating the construction from Section  \ref{construction}.

For a II$_1$ factor $M$, we denote by $\mathcal V(M)$ the set of pairs $(u_1,u_2)\in\mathcal U(M)\times\mathcal U(M)$ such that $u_1^2=u_2^3=1$ and $\{u_1\}''\perp\{u_2\}''$. We endow $\mathcal U(M)\times\mathcal U(M)$ with the product $\|\cdot\|_2$-topology.

\begin{definition}\label{Mtilde}Let $M_1$ be a II$_1$ factor. 
We construct a new II$_1$ factor $M$ which contains $M_1$ and arises as the inductive limit of 
 a sequence $(M_n)_{n\in\mathbb N}$ of II$_1$ factors satisfying $M_n\subset M_{n+1}$, for every $n\in\mathbb N$. Let $\sigma=(\sigma_1,\sigma_2):\mathbb N\rightarrow\mathbb N\times\mathbb N$ be a bijection such that $\sigma_1(n)\leq n$, for every $n\in\mathbb N$. 
 Assume that $M_1,\ldots,M_n$ have been constructed, for some $n\in\mathbb N$.
Let $\{(u_1^{n,k},u_2^{n,k})\}_{k\in\mathbb N}\subset\mathcal V(M_n)$ be a $\|\cdot\|_2$-dense sequence.
We define $$M_{n+1}:=\Phi(M_n,u_1^{\sigma(n)},u_2^{\sigma(n)}).$$
Note that $M_{n+1}$ is well-defined since $\sigma_1(n)\leq n$ and thus $(u_1^{\sigma(n)},u_2^{\sigma(n)})\in\mathcal V(M_n)$.
Then $M_n\subset M_{n+1}$ and Corollary \ref{amalgam2} implies that
$M_{n+1}$ is a II$_1$ factor. Thus, $M$ defined as follows is a II$_1$ factor: $$M:=({\cup_{n\in\mathbb N}M_n})''.$$ 
\end{definition}

{\bf Convention.} For the rest of this section, $(M_n)_{n\in\mathbb N}$
and $M$ denote the II$_1$ factors introduced in Definition \ref{Mtilde}.

\begin{definition}\label{cofinal}
An ultrafilter $\mathcal U$ on a set $I$ is called {\it countably cofinal} if there exists a sequence $\{A_n\}_{n\in\mathbb N}\subset\mathcal U$ with $\cap_nA_n=\emptyset$.
\end{definition}

\begin{proposition}\label{twounitaries2} 
Let $u_1,u_2\in\mathcal U(M^{\mathcal U})$  such that $u_1^2=u_2^3=1$ and $\{u_1\}''\perp\{u_2\}''$, where $\mathcal U$ is a countably cofinal ultrafilter on a set $I$. 

%Let $p,q_1,q_2,q_3\in M^\omega$ be projections such that $\tau(p)=\frac{1}{2}$ and  $q_1+q_2+q_3=1$. Assume that $\{p\}''\perp\{q_1,q_2,q_3\}''$ and $q_i(2p-1)q_i\not=0$, for every $1\leq i\leq 3$. Define $u_1,u_2\in \mathcal U(M^\omega)$ by letting $u_1=2p-1$ and $u_2=q_1+\zeta q_2+\zeta^2 q_3$, where $\zeet=\exp(\frac{2\pi i}{3})$.

Then there exist Haar unitaries $v_1,v_2\in M^{\mathcal U}$ such that $[u_1,v_1]=[u_2,v_2]=[v_1,v_2]=0$.

\end{proposition}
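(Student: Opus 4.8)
The plan is to deduce Proposition~\ref{twounitaries2} from the combination of the lifting lemma (Lemma~\ref{lift}), the density built into the construction (Definition~\ref{Mtilde}), the factoriality/relative commutant properties of $\Phi$ (Corollary~\ref{amalgam2}), and the fact that property (T) of $M_1$ forces weak spectral gap along the tower (Proposition~\ref{T fact}). First I would use Lemma~\ref{lift} to lift $u_1,u_2\in\mathcal U(M^{\mathcal U})$: writing $u_1 = 2p-1$ for a projection $p$ with $\tau(p)=1/2$, and $u_2 = q_1 + \omega q_2 + \omega^2 q_3$ with $\omega = e^{2\pi i/3}$ and $q_1+q_2+q_3=1$, $\tau(q_j)=1/3$, the orthogonality $\{u_1\}''\perp\{u_2\}''$ is equivalent to $\{p\}''\perp\{q_1,q_2,q_3\}''$. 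Since $\mathcal U$ is countably cofinal, $M^{\mathcal U}$ is an ultraproduct of the constant sequence $M$ along a countably cofinal ultrafilter, and a standard reindexing argument (using a sequence $A_n\in\mathcal U$ with $\bigcap_n A_n=\emptyset$) lets me invoke Lemma~\ref{lift} to represent $p=(p_n)$, $q_i=(q_{i,n})$ with each $p_n,q_{i,n}\in M$ satisfying $q_{1,n}+q_{2,n}+q_{3,n}=1$ and $\{p_n\}''\perp\{q_{1,n},q_{2,n},q_{3,n}\}''$. Set $u_1^{(n)} = 2p_n - 1$ and $u_2^{(n)} = q_{1,n}+\omega q_{2,n}+\omega^2 q_{3,n}$; then $(u_1^{(n)},u_2^{(n)})\in\mathcal V(M)$ for each $n$ and $u_j = (u_j^{(n)})$ in $M^{\mathcal U}$.

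Next I would produce, for each $n$, a pair of commuting Haar unitaries $v_1^{(n)},v_2^{(n)}\in M$ with $[u_1^{(n)},v_1^{(n)}]=[u_2^{(n)},v_2^{(n)}]=0$, up to an error that vanishes along $\mathcal U$. Here is where the density and the iterative construction enter: since $\bigcup_m M_m$ is $\|\cdot\|_2$-dense in $M$ and $M_1$ has property (T), Proposition~\ref{T fact} (applied along the tower, or rather an approximate version of it) lets me approximate each $u_j^{(n)}$ in $\|\cdot\|_2$ by a pair $(\tilde u_1^{(n)},\tilde u_2^{(n)})$ lying in some $M_{m(n)}$ with $\tilde u_1^{(n)2}=\tilde u_2^{(n)3}=1$ and $\{\tilde u_1^{(n)}\}''\perp\{\tilde u_2^{(n)}\}''$ — i.e. a pair in $\mathcal V(M_{m(n)})$; care is needed to keep the algebraic relations (orders $2$ and $3$, orthogonality) exactly on the nose while approximating, which is precisely the point where one re-traces the projection picture as in Lemma~\ref{lift2} within $M_{m(n)}$. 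By the $\|\cdot\|_2$-density of $\{(u_1^{m(n),k},u_2^{m(n),k})\}_{k}$ in $\mathcal V(M_{m(n)})$ and the bijection $\sigma$, there is a later stage at which $\Phi$ is applied to (a pair arbitrarily close to) $(\tilde u_1^{(n)},\tilde u_2^{(n)})$; Corollary~\ref{amalgam2} then hands me Haar unitaries $v_1^{(n)},v_2^{(n)}\in M$ with $[\tilde u_1^{(n)},v_1^{(n)}]=[\tilde u_2^{(n)},v_2^{(n)}]=[v_1^{(n)},v_2^{(n)}]=0$ exactly. Finally, setting $v_j = (v_j^{(n)})\in M^{\mathcal U}$, I get Haar unitaries (being an $\mathcal U$-limit of Haar unitaries) satisfying $[v_1,v_2]=0$ exactly and $[u_1,v_1]=[u_2,v_2]=0$ because $\|u_j^{(n)} - \tilde u_j^{(n)}\|_2\to 0$ along $\mathcal U$ and $v_j^{(n)}$ are uniformly bounded.

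The main obstacle I anticipate is the bookkeeping needed to make the two approximation steps compatible and to keep the exact algebraic relations. When I replace $u_j^{(n)}$ by $\tilde u_j^{(n)}\in M_{m(n)}$ I must simultaneously (a) preserve $\tilde u_1^{(n)2}=\tilde u_2^{(n)3}=1$, (b) preserve exact orthogonality $\{\tilde u_1^{(n)}\}''\perp\{\tilde u_2^{(n)}\}''$, and (c) keep the $\|\cdot\|_2$-error small; this is exactly the content of a Lemma~\ref{lift2}-type perturbation carried out inside $M_{m(n)}$ rather than asymptotically, so it should go through, but it requires the diffuseness of $M_{m(n)}$ and a careful choice of $m(n)$ large enough that the target pair in $\mathcal V(M_{m(n)})$ is close to $(u_1^{(n)},u_2^{(n)})$. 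A secondary subtlety is that $\Phi$ is applied to the listed dense pairs $(u_1^{m,k},u_2^{m,k})$ rather than to arbitrary pairs, so I must use continuity: being within $\|\cdot\|_2$-distance $\eta$ of a pair whose $\Phi$-stage yields commuting Haar unitaries produces, via a small unitary conjugacy, commuting Haar unitaries nearly commuting with the original pair — and then feed this $\eta\to 0$ as $n$ ranges, absorbing it into the ultralimit. Once the errors are shown to vanish along $\mathcal U$, the conclusion is immediate.
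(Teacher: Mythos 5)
Your plan follows the same overall architecture as the paper's proof: lift the projections via Lemma~\ref{lift}, use the density built into Definition~\ref{Mtilde} to land near one of the listed pairs $(u_1^{m,k},u_2^{m,k})$, invoke Corollary~\ref{amalgam2} to obtain exactly commuting Haar unitaries, and pass to the ultralimit. However, there are two points where your execution diverges from the paper and where care (or correction) is needed.

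First, you invoke Proposition~\ref{T fact} and property~(T) of $M_1$ to justify approximating $u_j^{(n)}\in M$ by pairs lying in some $M_{m(n)}$. This is a confusion of roles: Proposition~\ref{T fact} concerns relative commutants in ultrapowers, not approximation of elements by elements of a dense subalgebra, and property~(T) plays no part whatsoever in Proposition~\ref{twounitaries2}. In the paper, property~(T) is used only in Proposition~\ref{nonGamma} to show $M$ has no property Gamma. The ingredient you actually need for this step is just that $\bigcup_m M_m$ is $\|\cdot\|_2$-dense in $M$ (true by construction of $M$), combined with a Lemma~\ref{lift2}-type perturbation carried out inside each $M_{m(n)}$ to restore the exact relations.

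Second, you lift $p,q_i$ to representatives $p_n,q_{i,n}\in M$ (via Lemma~\ref{lift} with the constant family $M$), and you then need the extra step of pushing them into some $M_{m(n)}$ while preserving the exact algebraic relations of orders $2,3$ and the orthogonality. You correctly flag this as the main obstacle, and it can be made to work, but it adds a nontrivial approximation layer with its own quantitative bookkeeping. The paper bypasses this entirely: since $\mathcal U$ is countably cofinal, it first applies~\cite[Lemma 2.2]{BCI15} to the tower $(M_n)$ to write $p,q_i\in\prod_{\mathcal U}M_{k_n}$ for a controlled sequence $(k_n)$ (this also produces a function $f:I\to\mathbb N$ with $\lim_{n\to\mathcal U}f(n)=\infty$ governing the approximation error), and then applies Lemma~\ref{lift} to the family $(M_{k_n})_{n\in I}$, landing directly in $\mathcal V(M_{k_n})$ with the exact relations; the density of the enumerated pairs $\{(u_1^{k_n,j},u_2^{k_n,j})\}_j$ then applies immediately. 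This makes the quantitative control explicit and avoids re-running Lemma~\ref{lift2} inside each $M_{m(n)}$. Finally, your mention of a ``small unitary conjugacy'' at the last step is unnecessary: once $\|u_j^{(n)}-u_j^{k_n,j_n}\|_2\to 0$ along $\mathcal U$ and the Haar unitaries $v_j^{(n)}$ are uniformly bounded, the estimate $\|[u_j^{(n)},v_j^{(n)}]\|_2\le 2\|u_j^{(n)}-u_j^{k_n,j_n}\|_2$ already gives $[u_j,v_j]=0$ in the ultrapower.
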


\begin{proof} %Since $\omega$ is countably cofinal, we can find a sequence $\{A_n\}_{n\geq 1}$ in $\omega$ such that $\cap_nA_n=\emptyset$. Put $A_0=I\setminus A_1$. As in the proof of \cite[Lemma 2.2]{BCI15}, for $i\in I$, let $f(i)$ be the largest integer $n\geq 0$ such that $i\in I$. Then $f:I\rightarrow\mathbb N$ is well-defined and $\lim\limits_\omega f(i)=+\infty$.
Let $p,q_1,q_2,q_3\in M^{\mathcal U}$ be projections such that $u_1=2p-1$ and $u_2=q_1+\zeta q_2+\zeta^2 q_3$, where $\zeta=\exp(\frac{2\pi i}{3})$. We may clearly assume that $u_1\not=\pm 1$, so that $p\not=0,1$.

%We consider two cases:{\bf Case 1}.  The set $\{1\leq i\leq 3\mid q_i(p-\tau(p))q_i=0\}$ has at most one element. Without loss of generality,  assume that $q_i(p-\tau(p))q_i\not=0$, for every $1\leq i\leq 2$.

Since $M=(\cup_{n\in\mathbb N}M_n)''$ and $\mathcal U$ is cofinal, \cite[Lemma 2.2]{BCI15} gives  that $p,q_1,q_2,q_3\in\prod_{n\in\mathcal U}M_{k_n}$, for some  $(k_n)_{n\in I}\subset\mathbb N$. Moreover, the proof of \cite[Lemma 2.2]{BCI15} provides a function $f:I\rightarrow\mathbb N$ such that $\lim\limits_{n\rightarrow\mathcal U}f(n)=+\infty$.

Since $\{p\}''\perp \{q_1,q_2,q_3\}''$, by Lemma \ref{lift}, we can represent $p=(p_n)$ and $q_i=(q_{i,n})$, where $p_n,q_{n,i}\in M_{k_n}$ are projections such that $q_{1,n}+q_{2,n}+q_{3,n}=1$ and $\{p_n\}''\perp\{q_{1,n},q_{2,n},q_{3,n}\}''$, for every $n\in I$.
Let $u_{1,n}=2p_n-1$ and $u_{2,n}=q_{1,n}+\zeta q_{2,n}+\zeta^2 q_{3,n}$. Then 
 $u_1=(u_{1,n})$ and $u_2=(u_{2,n})$. Since $\{u_{1,n}\}''=\{p_n\}''\perp\{q_{1,n},q_{2,n},q_{3,n}\}''=\{u_{2,n}\}''$, we get $(u_{1,n},u_{2,n})\in\mathcal V(M_{k_n})$, for every $n\in I$.

 Since $\{(u_1^{k_n,j},u_2^{k_n,j})\}_{j\in\mathbb N}$ is dense in  $\mathcal V(M_{k_n})$, we can find $j_n\in\mathbb N$ such that \begin{equation}\label{approx}
\text{ $\|u_{1,n}-u_1^{k_n,j_n}\|_2+\|u_{2,n}-u_2^{k_n,j_n}\|_2\leq\frac{1}{f(n)}$, for every $n\in I$.   } 
\end{equation}
For $n\in I$, let $l_n\in\mathbb N$ such that $\sigma(l_n)=(k_n,j_n)$. Then $M_{\sigma(l_n)+1}=\Phi(M_{\sigma(l_n)},u_1^{k_n,j_n},u_2^{k_n,j_n})$.
Thus, by Corollary \ref{amalgam2}, we can find Haar unitaries $v_{1,n},v_{2,n}\in\mathcal U(M_{\sigma(l_n)+1})\subset\mathcal U(M)$ such that
\begin{equation}\label{comm}\text{$ 
[u_1^{k_n,l_n},v_{1,n}]=[u_2^{k_n,l_n},v_{2,n}]=[v_{1,n},v_{2,n}]=0$, for every $n\in I$.}    
\end{equation}
Finally, let $v_1=(v_{1,n}),v_2=(v_{2,n})\in\mathcal U(M^{\mathcal U}).$ Then $v_1,v_2$ are Haar unitaries and as $\lim\limits_{n\rightarrow\mathcal U}
f(n)=+\infty$, \eqref{approx} and \eqref{comm} together imply that  $[u_1,v_1]=[u_2,v_2]=[v_1,v_2]=0$. \end{proof}

%{\bf Case 2}.  The set $\{1\leq i\leq 3\mid q_i(p-\tau(p))q_i=0\}$ has at least two elements. 

%Without loss of generality,  assume that $q_i(p-\tau(p))q_i=0$, for every $1\leq i\leq 2$.

%We claim that $Q:=\{p,q_1,q_2,q_3\}''$ is a type I von Neumann algebra. Let $1\leq i\leq 2$. Since $q_ipq_i=\tau(p)q_i$, we get that $v_i:=\tau(p)^{-\frac{1}{2}}q_ip$ is a partial isometry. Thus, $p_i:=v_i^*v_i=\tau(p)^{-1}pq_ip$ is a projection. Recall that any von Neumann algebra generated by two projections is of type I, being a direct sum of type I$_1$ and I$_2$ algebras.Since  $pQp=\{pq_1p,pq_2p,pq_3p\}''=\{p_1,p_2,p\}''$ and $p_1,p_2\in pM^{\mathcal U} p$ are projections, $pQp$ is of type I. Since $q_i((1-p)-\tau(1-p))q_i=q_i(\tau(p)-p)q_i=0$, for every $1\leq i\leq 2$, we similarly get that $(1-p)Q(1-p)$ is of type I. The last two facts imply the claim.

%Since $Q$ is of type I, $Q'\cap M$ is diffuse. Indeed, since $Q$ is of type I, we can find central projections $\{z_k\}_{k\geq 0}\subset\mathcal Z(Q)$ such that $\sum_{k\geq 0}z_k=1$, $\mathcal Z(Q)z_0$ is diffuse and $Qz_k\cong \mathbb M_k(\mathbb C)$, for every $k\geq 1$. Now, $z_0(Q'\cap M)z_0$ is diffuse as it contains $\mathcal Z(Q)z_0$. Since $M$ is diffuse and $Qz_k\cong\mathbb M_k(\mathbb C)$, we also get that $z_k(Q'\cap M)z_k=(Qz_k)'\cap z_kM z_k$ is diffuse, for every $k\geq 1$. Thus,  $Q'\cap M$  and hence $Q'\cap M^{\mathcal U}$ is diffuse.If $v\in Q'\cap M^{\mathcal U}$ is a Haar unitary, then the conclusion is satisfied for $v_1=v_2=v$.

In order to prove Theorem \ref{main lifting theorem}, we also need to find instances which guarantee that $M$ is full. This happens if $M_1$ has property (T):

\begin{proposition}\label{nonGamma}
Assume that $M_1$ has property (T). Then $M$ does not have property Gamma.
\end{proposition}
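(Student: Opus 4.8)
The plan is to show that $M$ has weak spectral gap over $M_1$, which combined with property (T) of $M_1$ and the fact that $M_1'\cap M = \mathbb{C}1$ will force $M'\cap M^{\mathcal U} = \mathbb{C}1$. First I would record that $M_1$ is irreducible in $M$, i.e. $M_1'\cap M = \mathbb{C}1$. This should follow by iterating the ``moreover'' part of Corollary~\ref{amalgam2}: at each step $M_{n+1} = \Phi(M_n, u_1^{\sigma(n)}, u_2^{\sigma(n)})$, and since $M_n$ has no type I direct summand we have $M_1 \nprec_{M_n} \{u_i^{\sigma(n)}\}''$ for $i=1,2$ (as $\{u_i\}''$ is finite-dimensional while $M_1$, being a II$_1$ factor, cannot be captured in a corner of a finite-dimensional algebra — this uses that $M_1$ is diffuse and in fact that $M_1 \nprec_{M_n} B$ for any finite-dimensional $B$). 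Hence $M_1'\cap M_{n+1} \subset M_n$, and by induction $M_1'\cap M_{n+1} \subset M_1'\cap M_1 = \mathbb{C}1$. Taking the union over $n$ and using normality of the conditional expectation gives $M_1'\cap M = \mathbb{C}1$.

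Next I would invoke Proposition~\ref{T fact}: since $M_1 \subset M$ is a subfactor with property (T), we have $M_1'\cap M^{\mathcal U} = (M_1'\cap M)^{\mathcal U} = (\mathbb{C}1)^{\mathcal U} = \mathbb{C}1$. So $M_1$ is already irreducible in the ultrapower. Now suppose $x = (x_n) \in M'\cap M^{\mathcal U}$ with $\tau(x) = 0$; I want to show $x = 0$. Since $x$ commutes with all of $M$, in particular it commutes with $M_1$, so $x \in M_1'\cap M^{\mathcal U} = \mathbb{C}1$, and combined with $\tau(x)=0$ we get $x = 0$. This shows $M'\cap M^{\mathcal U} = \mathbb{C}1$, i.e. $M$ does not have property Gamma.

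Actually the argument is even more direct than I first framed it: the only real content is (1) the irreducibility $M_1'\cap M = \mathbb{C}1$, obtained by iterating Theorem~\ref{amalgam}/Corollary~\ref{amalgam2}, and (2) the weak spectral gap transfer $M_1'\cap M^{\mathcal U} = (M_1'\cap M)^{\mathcal U}$ from Proposition~\ref{T fact}. Property Gamma of $M$ would give a nontrivial element of $M'\cap M^{\mathcal U} \subset M_1'\cap M^{\mathcal U}$, contradicting step (2) once we know $M_1'\cap M^{\mathcal U} = \mathbb{C}1$.

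The main obstacle I anticipate is justifying $M_1 \nprec_{M_n} \{u_i^{\sigma(n)}\}''$ cleanly so that Corollary~\ref{amalgam2}'s hypothesis ``$Q \nprec_M \{u_i\}''$'' applies with $Q = M_1$ at every stage of the iteration. The point is that $\{u_i\}''$ is finite-dimensional (since $u_1^2 = u_2^3 = 1$), so any corner $p_0 M_1 p_0$ that embeds into it would be finite-dimensional, forcing $M_1$ to have a finite-dimensional corner — impossible for a II$_1$ factor. One must also be slightly careful that it is $M_1$, not merely $M_n$, that we track through the iteration: the ``moreover'' statement of Corollary~\ref{amalgam2} is exactly set up for an arbitrary subalgebra $Q$, so applying it with $Q = M_1 \subset M_n$ at each step (noting $M_1 \nprec_{M_n} \{u_i\}''$) yields $M_1'\cap M_{n+1} \subset M_n$ inductively, which telescopes to the desired irreducibility. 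No genuinely hard estimate is needed; the work is in assembling these already-proved ingredients correctly.
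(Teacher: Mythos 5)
Your proof is correct and follows essentially the same route as the paper: iterate the ``moreover'' clause of Corollary~\ref{amalgam2} with $Q=M_1$ (using that $M_1$ is diffuse and each $\{u_i^{\sigma(n)}\}''$ is finite-dimensional to get $M_1\nprec_{M_n}\{u_i^{\sigma(n)}\}''$) to obtain $M_1'\cap M=\mathbb{C}1$, then apply Proposition~\ref{T fact} and the inclusion $M'\cap M^{\mathcal U}\subset M_1'\cap M^{\mathcal U}$. The only difference is that you spell out the diffuse-vs.-finite-dimensional intertwining obstruction explicitly, which the paper leaves implicit.
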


\begin{proof} Let $n\in\mathbb N$. Recall that $M_{n+1}=\varphi(M_n,u_1^{\sigma(n)},u_2^{\sigma(n)})$ and $M_1\subset M_n$. Since $M_1$ is a II$_1$ factor, we have that 
$M_1\nprec_{M_n}\{u_1^{\sigma(n)}\}''$ and $M_1\nprec_{M_n}\{u_2^{\sigma(n)}\}''$. By applying Corollary \ref{amalgam2} we derive that $M_1'\cap M_{n+1}=M_1'\cap M_n$. 
Thus, we get that $M_1'\cap M_n=\mathcal Z(M_1)=\mathbb C1$.
Since this holds for every $n\in\mathbb N$, we deduce that $M_1'\cap M=\mathbb C1$. Finally, since $M_1$ has property (T), by Proposition \ref{T fact}, we have that $M_1'\cap M^{\mathcal U}=(M_1'\cap  M)^{\mathcal U}=\mathbb C1$, where $\mathcal U$ is a free ultrafilter on $\mathbb N$. Hence, $ M'\cap M^{\mathcal U}=\mathbb C1$ and so $M$ does not have property Gamma. 
\end{proof}

\begin{proof}[Proof of Theorem \ref{main lifting theorem}]
Let $M_1$ be a II$_1$ factor with property (T), e.g., take $M_1=\text{L}(\text{SL}_3(\mathbb Z))$. Let $M$ be constructed as in Definition \ref{Mtilde}. 
The conclusion follows from Propositions \ref{twounitaries2} and \ref{nonGamma}. 
\end{proof}

\subsection{Proof of Theorem \ref{h=0} and its corollaries}
In this subsection, we prove that the II$_1$ factor $M$ from Theorem \ref{main lifting theorem} also satisfies the conclusion of Theorems \ref{h=0} and  \ref{corollary about non ee of free group factors with our thing} and Corollary \ref{Jekel corollary}.
To this end, we first show the following:

\begin{corollary}\label{23projections}
 Let $p,q_1,q_2,q_3\in M^{\mathcal U}$ be projections such that  $q_1+q_2+q_3=1$, where $\mathcal U$ is a countably cofinal ultrafilter on a set $I$. Assume that $\{p\}''\perp\{q_1,q_2,q_3\}''$.
 
 Then $h(\{p,q_1,q_2,q_3\}'',M^{\mathcal U})\leq 0$.\end{corollary}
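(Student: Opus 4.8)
The plan is to reduce Corollary \ref{23projections} to Fact \ref{twounitaries} by producing, out of the projections $p,q_1,q_2,q_3$, the two unitaries $u_1,u_2$ of orders $2$ and $3$ that appear in the statement of Theorem \ref{main lifting theorem}, together with the commuting Haar unitaries $v_1,v_2$ supplied by Proposition \ref{twounitaries2}. Concretely, first I would set $u_1=2p-1$ and $u_2=q_1+\zeta q_2+\zeta^2q_3$ with $\zeta=\exp(2\pi i/3)$, so that $u_1\in\mathcal U(M^{\mathcal U})$ with $u_1^2=1$, $u_2\in\mathcal U(M^{\mathcal U})$ with $u_2^3=1$, and $\{u_1\}''=\{p\}''$, $\{u_2\}''=\{q_1,q_2,q_3\}''$, hence $\{u_1\}''\perp\{u_2\}''$ by hypothesis. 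Also $\{u_1,u_2\}''=\{p,q_1,q_2,q_3\}''$ since $p=\tfrac{1}{2}(1+u_1)$ and the $q_i$ are recovered from $u_2$ via the spectral projections of a unitary of order three (for instance $q_1=\tfrac13(1+u_2+u_2^2)$, etc.).

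Next I would invoke Proposition \ref{twounitaries2}, which applies precisely because $\mathcal U$ is countably cofinal and $u_1^2=u_2^3=1$ with $\{u_1\}''\perp\{u_2\}''$: it yields Haar unitaries $v_1,v_2\in M^{\mathcal U}$ with $[u_1,v_1]=[u_2,v_2]=[v_1,v_2]=0$. At this point Fact \ref{twounitaries} applies verbatim to $M=M^{\mathcal U}$, $u_1,u_2$, $v_1,v_2$, giving $h(\{u_1,u_2\}'':M^{\mathcal U})\leq 0$. Since $\{u_1,u_2\}''=\{p,q_1,q_2,q_3\}''$, this is exactly the desired conclusion $h(\{p,q_1,q_2,q_3\}'':M^{\mathcal U})\leq 0$.

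There is essentially no serious obstacle here; the corollary is a packaging of Proposition \ref{twounitaries2} and Fact \ref{twounitaries}. The only minor point to handle cleanly is the degenerate case $u_1=\pm1$ (i.e.\ $p\in\{0,1\}$), where $\{p\}''=\mathbb C1$ and $\{p,q_1,q_2,q_3\}''=\{q_1,q_2,q_3\}''$ is abelian, so that $h(\{p,q_1,q_2,q_3\}'':M^{\mathcal U})\le h(\{q_1,q_2,q_3\}'')=0$ directly from the fact that abelian (hence hyperfinite, diffuse or not) algebras have vanishing $1$-bounded entropy together with Fact \ref{fact 1}; similarly if $u_2$ happens to be a scalar. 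Outside of these trivial cases the argument above goes through unchanged, and I would simply remark at the start of the proof that we may assume $u_1,u_2$ are non-trivial, exactly as in the proof of Proposition \ref{twounitaries2}.
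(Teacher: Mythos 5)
Your proposal is correct and matches the paper's own proof essentially verbatim: both define $u_1=2p-1$ and $u_2=q_1+\zeta q_2+\zeta^2q_3$, observe that $\{u_1,u_2\}''=\{p,q_1,q_2,q_3\}''$ and the order/orthogonality conditions hold, and then combine Proposition \ref{twounitaries2} with Fact \ref{twounitaries}. The extra remark about the degenerate case $p\in\{0,1\}$ is harmless but unnecessary, since Proposition \ref{twounitaries2} already covers it (its proof just says one may assume $u_1\neq\pm1$).
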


\begin{proof}
Define $u_1,u_2\in \mathcal U(M^{\mathcal U})$ by $u_1=2p-1$ and $u_2=q_1+\zeta q_2+\zeta^2 q_3$, where $\zeta=\exp(\frac{2\pi i}{3})$. Then $u_1^2=u_2^3=1$ and $\{u_1,u_2\}''=\{p,q_1,q_2,q_3\}''$. Thus, by combining Lemma \ref{twounitaries} and Proposition \ref{twounitaries2} we get that $h(\{p,q_1,q_2,q_3\}'':M)=h(\{u_1,u_2\}'':M)\leq 0.$
\end{proof}

To prove that $h(M^{\mathcal U})\leq 0$, we will need an additional lemma:
\begin{lemma}\label{excision}
 Let $(A,\tau)$ be a diffuse tracial von Neumann algebra and $x\in A$ such that $x=x^*$ and $\tau(x)=0$. Let $\mathcal F$ be the set of projections $p\in A$ such that $\tau(xp)=0$. Then $\mathcal F''=A$.
\end{lemma}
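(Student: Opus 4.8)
The plan is to show that the von Neumann algebra generated by the projections $p$ with $\tau(xp)=0$ is all of $A$, by producing enough such projections to recover $x$ itself together with a full "background" diffuse algebra. First I would dispose of the trivial case $x=0$, where every projection qualifies and $\mathcal F''=A$ since $A$ is diffuse (hence generated by its projections). So assume $x\neq 0$ and write $x=x_+-x_-$ with support projections $e_+$ of $x_+$ and $e_-$ of $x_-$; since $\tau(x)=0$ and $x\neq 0$, both $x_+$ and $x_-$ are nonzero. The key observation is that the spectral projections of $x$ (those of the form $1_{(-\infty,s]}(x)$, $1_{[t,\infty)}(x)$, etc.) almost never satisfy $\tau(xp)=0$, so one cannot directly read off $x$ from $\mathcal F$; instead one builds the relevant projections by a balancing/excision argument inside $A$, very much in the spirit of Lemma \ref{excise}.

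The main step is to show $x\in\mathcal F''$. For this I would argue that $\mathcal F''$ contains a spectral projection of $x$ that separates the positive and negative parts, and more: by applying an excision argument (Lemma \ref{excise}, or a direct continuity argument using that $A$ is diffuse) to suitable compressions of $x$, one obtains for each pair of Borel sets $S\subset(0,\infty)$, $T\subset(-\infty,0)$ with $\tau(x\,1_S(x))=\tau(x\,1_T(x))=:c>0$ a projection $p$ with $p\geq 1_S(x)$ on the positive side, $p\leq 1-1_T(x)$ on the negative side — wait, more carefully: the point is to produce, inside $e_+Ae_+$ and $e_-Ae_-$, increasing continuous families of projections $(f_t)$, $(g_t)$ with $\tau(x f_t)$ and $\tau(x g_t)$ ranging over intervals, so that $p=f_t+(e_- -g_s)+ (1-e_+-e_-)$ lands in $\mathcal F$ for a curve of parameters; varying these and taking differences of such projections in $\mathcal F$ then generates, inside $\mathcal F''$, all the spectral projections of $x_+$ and of $x_-$, hence $x_+$, $x_-$, and $x$. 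The cleanest route: fix the "neutral" part $z:=1-e_+-e_-=1_{\{0\}}(x)$; for any projection $r\in zAz$ we have $\tau(xr)=0$, so $zAz\subset\mathcal F''$ (using diffuseness of $zAz$ when $z\neq 0$). Then handle $e_+Ae_+\oplus e_-Ae_-$: for projections $f\in e_+Ae_+$, $g\in e_-Ae_-$, we have $p:=f+g+z'\in\mathcal F$ iff $\tau(x_+f)=\tau(x_-g)$, where $z'$ is any projection in $zAz$; this is a codimension-one condition, and by diffuseness the projections $f$ with $\tau(x_+ f)$ equal to a prescribed value (in the appropriate range) have join equal to all of $e_+Ae_+$ — this last claim being exactly a diffuseness/continuity statement one proves by the same net-of-projections technique as in Lemma \ref{excise}. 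Pairing such $f$ and $g$ with matching $\tau$-values, and letting $f$ range, shows $e_+Ae_+\vee e_-Ae_-\subset\mathcal F''$, hence $1_{(0,\infty)}(x)=e_+\in\mathcal F''$ and then $x\in\mathcal F''$.

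Finally, having $x\in\mathcal F''$, I would show the full algebra is recovered. The subtlety is that $\mathcal F''$ could a priori be a proper subalgebra containing $x$; one must use diffuseness of $A$ globally, not just of the pieces. The argument above in fact already gives $zAz\subset\mathcal F''$ and $e_+Ae_+, e_-Ae_-\subset\mathcal F''$; what remains is to connect these corners. For this, note that for any projection $r$ with $r\leq e_+$ and any partial isometry-type element, one can find projections $p\in\mathcal F$ that are "spread out" across $e_+,e_-,z$ — concretely, for a projection $r_0\in A$ with $\tau(xr_0)$ small, a further excision (Lemma \ref{excise}) applied to $x r_0 = r_0 x r_0$-type elements yields $p\leq r_0$ (or $p$ comparable to $r_0$) with $\tau(xp)=0$ and $\tau(p)$ bounded below; letting $r_0$ range over all projections with $\tau(x r_0)\to 0$ — which, since $A=\mathcal F''$-candidate is diffuse, includes projections generating any diffuse subalgebra transverse to $\{x\}''$ — forces $\mathcal F''=A$. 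I expect the genuine obstacle to be this last connectivity point: proving that the projections in $\mathcal F$, which are constrained by the linear condition $\tau(xp)=0$, are nonetheless "generic enough" to generate a full diffuse algebra rather than getting trapped in the commutant of $x$ or a corner; the resolution is to exploit, repeatedly and carefully, that in a diffuse algebra one has continuous paths of projections with prescribed moments against a fixed self-adjoint element, so a codimension-one constraint never prevents generation.
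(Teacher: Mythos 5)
Your proposal correctly identifies the structure of the problem (decompose via support projections of $x_\pm$ and the kernel projection) and correctly spots the genuine difficulty: after producing the three diagonal corners $e_+Ae_+$, $e_-Ae_-$, $zAz$ inside $\mathcal F''$, you are stuck in the non-abelian case, because those corners generate only the block-diagonal subalgebra $e_+Ae_+\oplus e_-Ae_-\oplus zAz$, not $A$. You flag this ``connectivity'' issue yourself, and the closing paragraph does not resolve it --- it merely asserts, without argument, that codimension-one linear constraints on projections cannot prevent generation, and gestures at ``a further excision.'' That is not a proof, and indeed it is exactly where a direct approach fails: nothing in the hypothesis $\tau(xp)=0$ obviously produces the off-diagonal partial isometries $e_+Ae_-$, etc.

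The missing idea, which the paper uses, is a two-step reduction that sidesteps the connectivity problem entirely. First, prove the lemma under the additional hypothesis that $A$ is \emph{abelian}. Then your three-corner decomposition is a genuine direct sum decomposition $A=Aq\oplus Ar\oplus A(1-q-r)$ (with $q,r$ the supports of the positive and negative parts $y,z$ of $x$), and the balancing argument --- for $e\in Aq$ a projection with $\tau(ye)\le\tau(z)$ find $f\in Ar$ with $\tau(zf)=\tau(ye)$, so $e+f\in\mathcal F$, and recover $e$ by playing off a common $f$ against orthogonal pieces of $e$ --- gives $Aq,Ar\subset\mathcal F''$, while $A(1-q-r)\subset\mathcal F$ trivially. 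Second, for general $A$, observe that for any diffuse abelian von Neumann subalgebra $B\subset A$ and any projection $p\in B$ with $\tau(E_B(x)p)=0$ one also has $\tau(xp)=\tau(E_B(x)p)=0$, so $p\in\mathcal F$; since $\tau(E_B(x))=0$, the abelian case applied to $E_B(x)\in B$ gives $B\subset\mathcal F''$. As $A$ is generated by its diffuse abelian (e.g.\ maximal abelian) subalgebras, this forces $\mathcal F''=A$. This conditional-expectation reduction is the step your sketch does not contain and cannot reach from within the fixed corner decomposition.
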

 
\begin{proof}
We first prove the conclusion under the assumption that $A$ is abelian.  Let $x=y-z$ be the decomposition of $x$ into positive and negative parts. Let $q$ and $r$ be the support projections of $y$ and  $z$, respectively.  Since $0=\tau(x)=\tau(y)-\tau(z)$, we get that $\tau(y)=\tau(z)$.

Let  $e\in Aq$ be a projection. Since $A$ is diffuse and $\tau(ye)\leq\tau(y)=\tau(z)=\tau(zr)$, we can find a projection $f\in Ar$ such that $\tau(zf)=\tau(ye)$. Then we have that $e-f\in\mathcal F$.  Since $ef=0$, we get that $e+f=(e-f)^2\in\mathcal F''$ and thus $e\in\mathcal F''$, for every projection $e\in Aq$. Thus, $Aq\subset\mathcal F''$. Similarly, we conclude that $Ar\subset\mathcal F''$. Since $x(1-q-r)=0$, we also have that $A(1-q-r)\subset\mathcal F''$.
Since $A$ is abelian, it follows that $A\subset\mathcal F''$ and thus $\mathcal F''=A$.

For general $A$, let $B\subset A$ be a diffuse abelian von Neumann subalgebra. 
Note that $\tau(E_B(x))=0$ and that if $p\in B$ is a projection with $\tau(E_B(x)p)=0$, then $\tau(xp)=\tau(E_B(x)p)=0$ and so $p\in\mathcal F$. 
By applying the above proof to $B$ and $E_B(x)\in B$, we conclude that $B\subset\mathcal F''$. Since this holds for every diffuse abelian von Neumann subalgebra $B\subset A$, we conclude that $\mathcal F''=A$.
\end{proof}

\begin{theorem}\label{hleq0}
$h(M^{\mathcal U})\leq 0$, for any ultrafilter $\mathcal U$ on any set $I$.
\end{theorem}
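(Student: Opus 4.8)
The plan is to reduce the estimate for a general ultrafilter $\mathcal U$ to the case of a countably cofinal ultrafilter, and then to deduce the countably cofinal case from Corollary \ref{23projections} by exhibiting $M^{\mathcal U}$ as the von Neumann algebra generated by a family of subalgebras of the form $\{p,q_1,q_2,q_3\}''$ with $\{p\}''\perp\{q_1,q_2,q_3\}''$, all sharing a common diffuse subalgebra. First I would dispose of the non-countably-cofinal case: if $\mathcal U$ is not countably cofinal, then $M^{\mathcal U}\cong M$ (this is standard, cf. the discussion after Definition \ref{cofinal}), and since $M$ is generated by pairs $(u_1,u_2)\in\mathcal V(M)$ arising in the construction — each such pair sitting inside some $M_n$ together with commuting Haar unitaries in $M_{n+1}$ — one gets $h(M)\le 0$ directly from Fact \ref{twounitaries} and Fact \ref{joins}, using that a common diffuse subalgebra can be arranged. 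So the substantive content is the countably cofinal case.

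For a countably cofinal $\mathcal U$, the strategy is: (i) fix a diffuse subalgebra $D\subset M\subset M^{\mathcal U}$, say $D=\{u_1\}''$ for some fixed Haar unitary; (ii) show that $M^{\mathcal U}$ is generated by subalgebras of the form $N_j=\{p^{(j)},q_1^{(j)},q_2^{(j)},q_3^{(j)}\}''$, where each $4$-tuple satisfies $q_1^{(j)}+q_2^{(j)}+q_3^{(j)}=1$ and $\{p^{(j)}\}''\perp\{q_1^{(j)},q_2^{(j)},q_3^{(j)}\}''$, and where each $N_j$ contains (a copy of) $D$; (iii) apply Corollary \ref{23projections} to get $h(N_j:M^{\mathcal U})\le 0$ for each $j$; (iv) combine via the join facts. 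The combination step works as follows: enumerate the $N_j$, form the increasing chain $N_1\vee\cdots\vee N_j$. Since each $N_j\supset D$ is diffuse, $N_1\vee N_2$ has diffuse intersection (it contains $D$), so Fact \ref{joins} gives $h(N_1\vee N_2:M^{\mathcal U})\le h(N_1:M^{\mathcal U})+h(N_2:M^{\mathcal U})\le 0$; inducting, $h(N_1\vee\cdots\vee N_j:M^{\mathcal U})\le 0$ for all $j$; then Fact \ref{unions} gives $h(\bigvee_j N_j:M^{\mathcal U})=\sup_j h(N_1\vee\cdots\vee N_j:M^{\mathcal U})\le 0$. If $\bigvee_j N_j=M^{\mathcal U}$, then $h(M^{\mathcal U})=h(M^{\mathcal U}:M^{\mathcal U})\le 0$.

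The main obstacle, and where Lemma \ref{excision} enters, is step (ii): showing that such orthogonal $4$-tuples (with orders $2$ and $3$, i.e. genuinely of the form required by $\mathcal V$) generate all of $M^{\mathcal U}$, not merely a proper subalgebra, while keeping a fixed diffuse subalgebra inside each. The point is that $M^{\mathcal U}$ is generated by its unitaries, and it suffices to see that each self-adjoint element — equivalently each spectral projection $f$ of each element — lies in the von Neumann algebra generated by the relevant $N_j$'s. Given a diffuse subalgebra $A\subset M^{\mathcal U}$ commuting suitably, or more precisely given a fixed Haar unitary $u_1$ with $\{u_1\}''$ orthogonal to a large diffuse algebra, one writes $x=f-\tau(f)$, applies Lemma \ref{excision} inside an appropriate diffuse algebra to realize $f$ (or $x$) as generated by projections $q$ with $\tau(xq)=0$, i.e. with $\{u_1\}''\perp\{q\}''$ after normalization, and packages three such projections $q_1,q_2,q_3$ summing to $1$ by a standard refinement. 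One must also arrange that $u_1$ can be taken of order $2$ (replacing $\{u_1\}''$ by a two-point subalgebra $\{p\}''$ with $p$ a projection of trace $1/2$, so that $u_1=2p-1$ has $u_1^2=1$) — here the diffuseness in Lemma \ref{excision} is exactly what lets us split off such a $p$ orthogonal to everything in a given countably generated subalgebra. Assembling this so that every generator of $M^{\mathcal U}$ is captured, with a uniform diffuse $D$ in every piece, is the delicate bookkeeping; once it is done, Corollary \ref{23projections} together with Facts \ref{fact 1}, \ref{joins} and \ref{unions} finish the proof, and the general $\mathcal U$ reduces as above.
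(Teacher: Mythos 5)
Your high-level plan is the right one and matches the paper's strategy (reduce to the countably cofinal case, write $M^{\mathcal U}$ as a join of subalgebras of the form $\{p,q_1,q_2,q_3\}''$ all containing a common diffuse piece, apply Corollary~\ref{23projections} and then Facts~\ref{joins},~\ref{unions}). But the step you label ``delicate bookkeeping'' is where the whole proof lives, and your sketch of it does not go through as stated. The first issue is the choice of the common diffuse subalgebra $D$. You propose $D=\{u_1\}''$ for a Haar unitary, but then note that $u_1$ must be of order $2$ to feed into Corollary~\ref{23projections}; replacing $\{u_1\}''$ by a two-dimensional $\{p\}''$ destroys diffuseness, which is exactly what Facts~\ref{joins} and~\ref{unions} need. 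Your remark that ``the diffuseness in Lemma~\ref{excision} is exactly what lets us split off such a $p$'' does not resolve this tension. The paper's resolution is to embed $S=\text{L}(\mathbb Z/2\mathbb Z*\mathbb Z/2\mathbb Z)$ into $P=M^{\mathcal U}$, with its canonical trace-$1/2$ projections $p,q$, and take $D=S=\{p,q\}''$: this is diffuse (infinite dihedral group) even though both generating unitaries $2p-1,2q-1$ have order $2$, and freeness gives $\tau((2p-1)q)=0$ for free.

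The second issue is how Lemma~\ref{excision} is deployed. You describe applying it to each spectral projection $f$ of each element separately and ``packaging three projections summing to $1$ by a standard refinement''; this is not how the lemma is used and would not obviously capture all of $M^{\mathcal U}$. The paper instead fixes the single element $x=(1-q)(2p-1)(1-q)\in(1-q)P(1-q)$, applies Lemma~\ref{excision} once to see that the set $\mathcal F$ of projections $r\in(1-q)P(1-q)$ with $\tau(xr)=0$ generates the entire corner $(1-q)P(1-q)$, checks that for each such $r$ the tuple $(p,q,r,1-q-r)$ satisfies the hypotheses of Corollary~\ref{23projections} and contains $S$, and then runs the same argument with $q$ in place of $1-q$. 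The final, essential step --- which your outline omits entirely --- is to use the Murray--von Neumann equivalence of $q$ and $1-q$ inside $S$ to recapture the off-diagonal corners and conclude $S\vee qPq\vee(1-q)P(1-q)=P$. A minor further remark: for $\mathcal U$ not countably cofinal, there is no need for a separate direct argument that $h(M)\leq 0$ via the construction; since $M^{\mathcal U}\cong M$, the paper just uses $h(M)=h(M:M^{\mathcal V})\leq h(M^{\mathcal V})$ for a free ultrafilter $\mathcal V$ on $\mathbb N$, reducing the non-cofinal case to the cofinal one.
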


\begin{proof}
If $\mathcal U$ is not countably cofinal, then $M^{\mathcal U}=M$ by \cite[Lemma 2.3]{BCI15}. Thus, if $\mathcal V$ is a free ultrafilter on $\mathbb N$, then Facts \ref{fact 1} and \ref{in the presence of the ultra} give that $h(M^{\mathcal U})=h(M)=h(M:M^{\mathcal V})\leq h(M^{\mathcal V})$.
This implies that in order to prove the conclusion, we may assume that $\mathcal U$ is countably cofinal. 

Assume that $\mathcal U$ is a countable cofinal ultrafilter and denote $P=M^{\mathcal U}$.
%By Fact \ref{unions} we have that $$h(M^{\mathcal U})=\sup\{h(P:M^{\mathcal U})\mid\text{$P\subset M^{\mathcal U}$ diffuse separable von Neumann subalgebra}\}.$$ Thus, the conclusion is equivalent to showing that $h(P:M^{\mathcal U})\leq 0$, for every diffuse separable von Neumann subalgebra $P\subset M^{\mathcal U}$. Moreover, since $h(P:M^{\mathcal U})\leq h(\{P\cup M\}'':M^{\mathcal U})$, after replacing $P$ by $\{P\cup M\}''$, we may assume that $P$ is of type II$_1$.
Since $P$ is a II$_1$ factor, we can find a unital, trace-preserving embedding of $S:=\text{L}(\mathbb Z/2\mathbb Z*\mathbb Z/2\mathbb Z)$ into $P$. Let $p,q\in S$ be two projections with $\tau(p)=\tau(q)=\frac{1}{2}$ which generate the two canonical copies of $\text{L}(\mathbb Z/2\mathbb Z)$ inside $S$.

Then  $\|q(2p-1)q\|_2=\sqrt{\tau(q(2p-1)q(2p-1))}=\frac{1}{2}$ and similarly $\|(1-q)(2p-1)(1-q)\|_2=\frac{1}{2}$. Let $x=(1-q)(2p-1)(1-q)\in (1-q)P(1-q)$. Then $x=x^*, \tau(x)=0$ and $x\not=0$. We define $\mathcal F$ to be the set of projections $r\in (1-q)P(1-q)$ such that $\tau(xr)=0$.

For $r\in\mathcal F$ we define $S_r:=\{p,q,r,1-q-r\}''$. Then $\tau((2p-1)q)=0$, $\tau((2p-1)r)=\tau(xr)=0$  and $\tau((2p-1)(1-q-r))=\tau(2p-1)-\tau((2p-1)q)-\tau((2p-1)r)=0$. Thus, $\{p\}''\perp \{q,r,1-q-r\}''$.
%Moreover, $q(2p-1)q\not= 0$, $r(2p-1)r=rxr\not=0$, $(1-q-r)(2p-1)(1-q-r)=(1-q-r)x(1-q-r)\not=0$. 
Altogether, we can apply Corollary \ref{23projections} to deduce that 
\begin{equation}\label{Sr}
    \text{$h(S_r:P)\leq 0$, for every $r\in\mathcal F$.}
\end{equation}
Since $S\subset S_r$, for all $r\in \mathcal F$, and $S$ is diffuse, combining Facts \eqref{joins} and \eqref{unions} with \eqref{Sr} we get that
\begin{equation}\label{Sr2}
h(\bigvee_{r\in\mathcal F}S_r:P)\leq 0.
\end{equation}
On the other hand, by Lemma \ref{excision} we have that $\mathcal F''=(1-q)P(1-q).$ This implies that
\begin{equation}\label{Sr3}
    \bigvee_{r\in\mathcal F}S_r=S\bigvee (1-q)P(1-q).
\end{equation}
Combining \eqref{Sr2} and \eqref{Sr3} we get $h(S\bigvee (1-q)P(1-q):P)\leq 0$. Similarly,  $h(S\bigvee qPq:P)\leq 0.$ Using again that $S$ is diffuse, Fact \ref{joins} implies that $h(S\bigvee qPq\bigvee (1-q)P(1-q):P)\leq 0$.
Since the projections $q$ and $1-q$ are equivalent in $S$, we get that $S\bigvee qPq\bigvee (1-q)P(1-q)=P$, which implies the desired conclusion that $h(P)=h(P:P)\leq 0$.
\end{proof}

Although this is not needed to derive our main results, we mention an easy consequence of the previous proof which seems of independent interest:

\begin{corollary}
 Let $M$ be a II$_1$ factor such that $h(M)>0$. Let $\Gamma=\mathbb Z/2\mathbb Z*\mathbb Z/3\mathbb Z$. Then there exists a homomorphism $\pi:\Gamma\rightarrow\mathcal U(M)$ such that $h(\pi(\Gamma)'':M)>0$.
\end{corollary}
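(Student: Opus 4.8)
The plan is to deduce the corollary from the proof of Theorem \ref{hleq0} by contraposition: I would show that if $h\big(\pi(\Gamma)'':M\big)\le 0$ for \emph{every} homomorphism $\pi:\Gamma\to\mathcal U(M)$, then $h(M)\le 0$. Since $h(M)>0$ by hypothesis, this forces the existence of a homomorphism with $h(\pi(\Gamma)'':M)>0$, as desired. The implication is proved by rerunning the endgame of the proof of Theorem \ref{hleq0} with $M$ itself in place of the ultrapower $M^{\mathcal U}$. The only step of that proof which genuinely used the ultrapower is the appeal to Corollary \ref{23projections} (which in turn rests on Proposition \ref{twounitaries2}, the lifting Lemma \ref{lift}, and countable cofinality); here its role is played instead by the contraposition hypothesis, once one recalls that for a projection $p$ and a partition of unity $q_1+q_2+q_3=1$ into projections, the algebra $\{p,q_1,q_2,q_3\}''$ equals $\pi(\Gamma)''$ for the homomorphism $\pi:\Gamma\to\mathcal U(M)$ sending the order-two generator to $2p-1$ and the order-three generator to $q_1+\zeta q_2+\zeta^2 q_3$, where $\zeta=\exp(2\pi i/3)$.

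Carrying this out: assume $h(\pi(\Gamma)'':M)\le 0$ for all $\pi$. Since $M$ is a II$_1$ factor, fix a trace-preserving copy $S=\text{L}(\mathbb Z/2\mathbb Z*\mathbb Z/2\mathbb Z)\subset M$ with generating projections $p,q$ of trace $\tfrac12$; then $S$ is diffuse and $\tau((2p-1)q)=0$. Put $x=(1-q)(2p-1)(1-q)\in(1-q)M(1-q)$, a self-adjoint element of trace $0$, and let $\mathcal F$ be the set of projections $r\le 1-q$ with $\tau(xr)=0$. For $r\in\mathcal F$ the triple $q,r,1-q-r$ is a partition of unity, so $S_r:=\{p,q,r,1-q-r\}''=\pi_r(\Gamma)''$ with $\pi_r$ as above, and $S_r\supseteq S$; hence $h(S_r:M)\le 0$. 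Since every $S_r$ contains the diffuse $S$, iterating Fact \ref{joins} over finite subfamilies of $\mathcal F$ and applying Fact \ref{unions} to the resulting increasing chain gives $h\big(\bigvee_{r\in\mathcal F}S_r:M\big)\le 0$; Lemma \ref{excision} applied to the diffuse algebra $(1-q)M(1-q)$ and the element $x$ gives $\mathcal F''=(1-q)M(1-q)$, so $\bigvee_{r\in\mathcal F}S_r=S\vee(1-q)M(1-q)$ and therefore $h\big(S\vee(1-q)M(1-q):M\big)\le 0$. The symmetric argument with $q(2p-1)q\in qMq$ gives $h\big(S\vee qMq:M\big)\le 0$, and one more use of Fact \ref{joins} (the intersection contains the diffuse $S$) gives $h\big(S\vee qMq\vee(1-q)M(1-q):M\big)\le 0$. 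Finally $q$ and $1-q$ are equivalent in $S$, so $S\vee qMq\vee(1-q)M(1-q)=M$, whence $h(M)=h(M:M)\le 0$, a contradiction.

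I do not expect a genuine obstacle: the argument is essentially a transcription of the proof of Theorem \ref{hleq0}, the sole structural change being the replacement of the ultrapower-dependent Corollary \ref{23projections} by the contraposition hypothesis. The only things that must be checked are that each ingredient of that proof carried out inside $M^{\mathcal U}$ remains available inside an arbitrary II$_1$ factor $M$ — namely the trace-preserving embedding of $S$, the diffuseness of the corners $qMq$ and $(1-q)M(1-q)$, the equivalence of $q$ and $1-q$ in $S$, and the applicability of Lemma \ref{excision} together with Facts \ref{fact 1}, \ref{joins} and \ref{unions} — and all of these hold immediately because $M$ is a II$_1$ factor.
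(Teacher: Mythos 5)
Your proof is correct, and the key reduction (realizing $\{p,q_1,q_2,q_3\}''$ as $\pi(\Gamma)''$ for a homomorphism $\pi:\Gamma\to\mathcal U(M)$ sending the order-$2$ generator to $2p-1$ and the order-$3$ generator to $q_1+\zeta q_2+\zeta^2 q_3$) is exactly right. But you take a longer route than the paper does. You rerun the proof of Theorem \ref{hleq0} verbatim by contraposition, including the orthogonality constraint $\tau(xr)=0$ on the family $\mathcal F$ and the appeal to Lemma \ref{excision} to see that $\mathcal F''=(1-q)M(1-q)$. Neither is needed here. In Theorem \ref{hleq0} the orthogonality $\{p\}''\perp\{q_1,q_2,q_3\}''$ was indispensable because one must feed it into Corollary \ref{23projections} (and hence Proposition \ref{twounitaries2} and the lifting Lemma \ref{lift}) to manufacture commuting Haar unitaries; but in the present corollary the hypothesis is simply that $h(\pi(\Gamma)'':M)\le0$ for \emph{every} homomorphism $\pi$, which imposes no orthogonality at all. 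The paper's proof is direct: it starts from $h(M)>0$, uses $M=S\vee qMq\vee(1-q)M(1-q)$ and Fact \ref{joins} to get (without loss of generality) $h(S\vee qMq:M)>0$, and then defines $S_r=\{p,r,q-r,1-q\}''$ for \emph{arbitrary} projections $r\in qMq$. Since $qMq$ is trivially generated by its projections, one sees immediately — with no excision lemma — that $S\vee qMq=\bigvee_r S_r$, and a final application of Fact \ref{joins} (together with Fact \ref{unions}) then yields $h(S_r:M)>0$ for some $r$. Both arguments are valid; the paper's is cleaner precisely because it exploits the absence of the orthogonality requirement, which your transcription of Theorem \ref{hleq0} did not notice can be dropped.
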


\begin{proof}
 As $M$ is a II$_1$ factor, we can find a unital, trace-preserving embedding of $S:=\text{L}(\mathbb Z/2\mathbb Z*\mathbb Z/2\mathbb Z)$ into $M$. Let $p,q\in S$ be two projections with $\tau(p)=\tau(q)=\frac{1}{2}$ which generate the two canonical copies of $\text{L}(\mathbb Z/2\mathbb Z)$ inside $S$. Since $h(M)>0$ and $S\bigvee qMq\bigvee (1-q)M(1-q)=M$, Fact \ref{joins} gives that $h(S\bigvee qMq)>0$ or $h(S\bigvee (1-q)M(1-q))>0$. Assume, without loss of generality, that $h(S\bigvee qMq)>0$. 
 Given a projection $r\in qMq$, let $S_r=\{p,r,q-r,1-q\}''$.
 Since $S\bigvee qMq$ is generated by  $\{S_r\mid \text{$r\in qMq$ projection}\}$, Fact \ref{joins} implies that $h(S_r:M)>0$, for some projection $r\in qMq$. Since clearly $S_r=\pi(\Gamma)''$, for a homomorphism $\pi:\Gamma\rightarrow\mathcal U(M)$, the conclusion follows.
\end{proof}

\begin{proof}[Proof of Theorem \ref{h=0}]
Let $M_1$ be a II$_1$ factor with property (T), e.g., take $M=\text{L}(\text{SL}_3(\mathbb Z))$. Let $M$ be constructed as in Definition \ref{Mtilde}. By Theorem \ref{hleq0} and Proposition \ref{nonGamma} we get that $h(M^{\mathcal U})\leq 0$, for every ultrafilter $\mathcal U$, and $M$ does not have property Gamma.
\end{proof}

%The following follows from the above Theorem \ref{everything except the 1bdd entropy bit} and a standard argument in free entropy theory (essentially \cite{} Corollary 4.8). For the convenience of the reader, we include the proof here. 

\begin{proof}[Proof of Theorem \ref{corollary about non ee of free group factors with our thing}]

Let $M$ be as in Theorem \ref{h=0}. Suppose that for some ultrafilters $\mathcal U,\mathcal V$ on sets $I,J$, there  exists an embedding of $M^{\mathcal U}$ into $N^{\mathcal V}$ that contains the diagonal inclusion of $N$.    By combining Theorem \ref{h=0} and Facts \ref{in the presence of the ultra} and \ref{fact 1} we get the following chain of inequalities:  $$0<h(N)=h(N: N^{\mathcal V})\leq h(N:M^{\mathcal U})\leq h(M^{\mathcal U}: M^{\mathcal U})=h(M^{\mathcal U})\leq0,$$

which is a contradiction. 
\end{proof}

\begin{proof}[Proof of Corollary \ref{Jekel corollary}]
Let $M$ be as in Theorem \ref{h=0}. 
For a sequence $(k_n)\subset\mathbb N$ and free ultrafilter $\mathcal U$ on $\mathbb N$ with $\lim\limits_{n\rightarrow\mathcal U}k_n=+\infty$, let  $\mathcal M=\prod_{\mathcal U} \mathbb M_{k_n}(\mathbb C)$. Then Fact \ref{Jekel fact} implies that $h(\mathcal M)>0$. By Theorem \ref{corollary about non ee of free group factors with our thing}, we deduce that $M$ is not elementarily equivalent to $\mathcal M$.
\end{proof}

The following remark was communicated to us separately by I. Goldbring and D. Jekel. %We note that D. Jekel observed that the sentence is universally defined for all II$_1$ factors.       

\begin{remark}\label{GJ}
We give an explicit sentence distinguishing up to elementary equivalence any II$_1$ factor $M$ satisfying the properties of Theorem \ref{main lifting theorem} and any tracial von Neumann algebra $(N,\tau)$ with $h(N)>0$, in particular $\text{L}(\mathbb F_2)$. This follows readily from Lemma \ref{lift2}.
For unitaries $u_1,u_2,v_1,v_2\in M$, we define the formulae
%The following is a first-order sentence for the property in Theorem \ref{main lifting theorem}. 
%For unitaries $u_1, u_2$ let
    $$ \phi(u_1,u_2)=\| u_1^2 - 1 \|_2 + \| u_2^3 - 1 \|_2 + | \tau(u_1u_2) - \tau(u_1) \tau(u_2) | + | \tau(u_1u_2^2) - \tau(u_1) \tau(u_2^2) |$$
 %We actually show in Lemma \ref{lift} that $W(M) = \{(u_1, u_2)\in \mathcal{U}(M)\times \mathcal{U}(M): \phi(u_1,u_2) \}$ is a definable set in the theory of diffuse tracial von Neumann algebras.  
   $$ \psi(u_1,u_2,v_1,v_2) = \| u_1v_1-v_1u_1 
   \|_2 + \| u_2v_2-v_2u_2 \|_2 + \| v_1v_2-v_2v_1 
   \|_2 + \sum_{k \in \mathbb{Z} \setminus \{0\}} 2^{-k} ( |\tau(v_1^k)| + |\tau(v_2^k)|).$$
   
   Note that $\phi(u_1,u_2) = 0$ means that  $u_1^2=u_2^3=1$ and $\{u_1\}'' \perp \{u_2\}''$. We also note that D. Jekel observed that Lemma \ref{lift2} implies that the set  $\{u_1, u_2 \in\mathcal U(M)\mid \phi(u_1,u_2) = 0\}$ is a definable set over the theory of II$_1$ factors.

Theorem \ref{main lifting theorem} shows that  $M$ satisfies
   $\sup_{u_1,u_2\in \mathcal U(M),\phi(u_1,u_2)=0} \big(\inf_{v_1, v_2 \in \mathcal{U}(M)} \psi(u_1,u_2,v_1,v_2)\big) = 0$.
   In combination with Lemma \ref{lift2}, we derive  the existence of a function $\delta:[0,\infty)\rightarrow [0,\infty)$ such that $\delta(0)=0$, $\delta((0,\infty))\subset (0,\infty)$ and for all $\varepsilon>0$, the following implication holds for $u_1,u_2\in\mathcal U(M)$: if  $\phi(u_1,u_2)<\delta(\varepsilon)$, then $\phi'(u_1,u_2):=\inf_{v_1, v_2 \in \mathcal{U}(M)} \psi(u_1,u_2,v_1,v_2)<\varepsilon$.
   Moreover, $\delta$ is independent of the II$_1$ factor $M$, and can be taken to be continuous and strictly increasing.
   Then we have that $\delta(\phi'(u_1,u_2))\leq\phi(u_1,u_2)$, for every $u_1,u_2\in\mathcal U(M)$, and we can thus write the distinguishing sentence as follows:
   $$\sup_{u_1,u_2\in\mathcal U(M)}\max(0, \delta(\phi'(u_1,u_2))-\phi(u_1,u_2)).$$
   In fact, it is easy to see that a II$_1$ factor $M$ satisfies this sentence if and only if it satisfies the conclusion of Theorem \ref{main lifting theorem}.
   \end{remark}

\bibliographystyle{amsalpha}
\bibliography{innerAmen}

%\begin{thebibliography}{ABC99}
%\bibitem[Ho07]{Ho07} C. Houdayer: {\it }Construction of type II$_1$ factors with prescribed countable fundamental group}, J. Reine
%Angew Math. {\bf 634} (2009), 169-207.
%\bibitem[IPP05]{IPP05}
%\bibitem[Po01]{Po01}
%\bibitem[PV09]{PV09}
%\bibitem[Va07]{Va07}

%\end{thebibliography}

\end{document}